\numberwithin{equation}{section}
\newtheorem{theorem}{Theorem}[section]
\newtheorem{corollary}[theorem]{Corollary}
\newtheorem{lemma}[theorem]{Lemma}
\newtheorem{proposition}[theorem]{Proposition}
\theoremstyle{definition}
\newtheorem{remark}[theorem]{Remark}
\newtheorem{definition}[theorem]{Definition}
\newcommand{\M}{{\mathcal M}}
\begin{document}

\title[]{Noncommutative maximal ergodic theorems for spherical means on the Heisenberg group}

\author{Guixiang Hong}
\address{School of Mathematics and Statistics, Wuhan University, Wuhan 430072, China\\
\emph{E-mail address: guixiang.hong@whu.edu.cn}}

\thanks{\small {{\it MR(2010) Subject Classification}.} Primary 46L55, 37A15; Secondary 46L51, 46B75.}
\thanks{\small {\it Keywords.}
Von Neumann algebra, noncommutative $L_p$ space, tensor/crossed product, spherical means, Heisenberg group, maximal ergodic theorem, individual ergodic theorem.}

\maketitle

\begin{abstract}
We prove maximal ergodic theorems for spherical averages on the Heisenberg  groups acting on $L_p$ spaces over measure spaces not necessarily commutative, that is, on noncommutative $L_p$ spaces. The scale of $p$ is optimal in the reduced Heisenberg group case. We also obtain the corresponding individual ergodic theorems and differential theorems in the noncommutative setting. The results can be regarded as noncommutative analogues of Nevo-Thangavelu's ergodic theorems. The approach of proof involves recent developments in noncommutative $L_p$ spaces and in noncommutative harmonic analysis, in addition to the spectral theory used in the commutative setting.
\end{abstract}

\section{Introduction}

Let $H^n$ denote the Heisenberg group, and let $\sigma_r$ denote the normalized Lebesgue measure on the sphere $\{(z,0): |z|=r\}$. Let $(X,m)$ be a standard measure space on which $H^n$ acts measurably by measure preserving transformations, and let $\pi(\sigma_r)$ denote the operator canonically associated with $\sigma_r$ on $L_p(X)$.  Nevo and Thangavelu \cite{NeTh97}  established the maximal ergodic theorems in $L_p$ for the sequence of operators $(\pi(\sigma_r))_{r>0}$.  Their result was later improved by Narayanan and Thangavelu \cite{NaTh04}  to obtain the optimal scale of $p$.

Along the current research line in noncommutative harmonic analysis, it is interesting to understand whether the maximal ergodic theorems remain true in noncommutative $L_p$ space for the two sequences of operators $(\pi(\sigma_r)\otimes id_{B(\ell_2)})_{r>0}$ (the tensor product extension) and $(\pi(\sigma_r)\rtimes id_{G})_{r>0}$ (the crossed product extension), where $B(\ell_2)$ is the matrix algebra of infinite dimension and $G$ is some locally compact group (or discrete group). We refer the readers to the next section for all the notions not appearing here and below.

The two kinds of extension problems are not trivial at all in many cases. The tensor product problems are motivated by matrix-valued harmonic analysis and quantum probability. To deal with the operator-valued Hardy-Littlewood maximal inequality, Mei \cite{Mei07} invented a very interesting and powerful techniques to reduce it to several noncommutative Doob's maximal inequalities. To prove  the weak type $(1,1)$ estimates of Calder\'on-Zygmund operators acting on matrix-valued functions,  Parcet \cite{Par09} exploited the pseudo-localization principle of the operators themselves and built the noncommutative Calder\'on-Zygmund decomposition. While the crossed product problems are firstly introduced by Junge, Mei and Parcet \cite{JMP1} in order to study Fourier multipliers on group von Neumann algebra. To  obtain the mapping property of twisted Hilbert transform, Parcet and Rogers \cite{PaRo} combined the arguments from Kakeya type constructions with geometric group theory and noncommutative Littlewood-Paley methods. To establish the Poincar\'e type inequalities for  group measure spaces,  Zeng \cite{Zen14} made use of a twisted version of Khintchine inequality with optimal order of constant.

In the present paper, we establish the maximal ergodic theorem associated to spherical averages on Heisenberg groups in a more general framework,  including the previously mentioned two extensions. In this sense, this paper is the first attempt to build  maximal ergodic theorem in noncommutative $L_p$ space for general trace preserving actions of locally compact groups, being a part of effort \cite{JuXu06} \cite{Ana06} \cite{Hu08} to extend classical maximal ergodic theorems to the noncommutative setting. In \cite{JuXu06}, Junge and Xu established the noncommutative analogue of the classical Dunford-Schwartz and Stein's maximal ergodic inequality. Anantharaman \cite{Ana06} and Hu \cite{Hu08} respectively proved the noncommutative analogues of Nevo-Stein's maximal ergodic inequality for free groups using two different approaches. Junge and Xu's maximal ergodic theorems have also been extended to more general semigroups---for instance analytic semigroups \cite{Bek08} \cite{LeXu12}, and to more general function spaces---for instance Orlicz spaces \cite{BCO17} \cite{Dir15}.

Motivated by the developments in classical ergodic theory, along the research line of noncommutative maximal ergodic theorem for trace preserving actions of locally compact groups, in a subsequent paper \cite{Hon1}, we study ergodic theorems for spherical averages on Gelfand pairs. As an application, the spherical maximal ergodic inequality could be used to obtain dimension free estimates of maximal inequality associated to ball averages acting on noncommutative $L_p$ spaces. For instance, in another subsequent paper \cite{Hon}, we obtain dimension free estimate associated to ball averages in $\mathbb R^n$ action on noncommutative $L_p$ spaces.

In the next section, we firstly recall the definition of vector-valued noncommutative $L_p$ spaces $L_p(\ell_{\infty})$-a substitute of the $L_p$-norm of maximal function in the commutative case, and the crossed product construction. Then we describe the framework for the research of noncommutative maximal ergodic theorem for trace-preserving action of locally compact groups. Finally, we state the results obtained in the present paper.

\section{Preliminaries, framework, and state of results}

\subsection{Vector-valued noncommutative $L_p$-spaces}
Let $\mathcal{M}$ be a von Neumann algebra equipped with a normal
semifinite faithful trace $\tau$ and $S^+_{\mathcal{M}}$ be the set
of all positive element $x$ in $\mathcal{M}$ with
$\tau(s(x))<\infty$, where $s(x)$ is the smallest projection $e$
such that $exe=x$. Let $S_{\mathcal{M}}$ be the linear span of
$S^+_{\mathcal{M}}$. Then any $x\in S_{\mathcal{M}}$ has finite
trace, and $S_{\mathcal{M}}$ is a $w^*$-dense $*$-subalgebra of
$\mathcal{M}$.

Let $1\leq p<\infty$. For any $x\in S_{\mathcal{M}}$, the operator
$|x|^p$ belongs to $S^+_{\mathcal{M}}$ ($|x|=(x^*x)^{\frac{1}{2}}$).
We define
$$\|x\|_p=\big(\tau(|x|^p)\big)^{\frac{1}{p}},\qquad\forall x\in S_{\mathcal{M}}.$$
One can check that $\|\cdot\|_p$ is well defined and is a norm on
$S_{\mathcal{M}}$. The completion of $(S_{\mathcal{M}},\|\cdot\|_p)$
is denoted by $L_p(\M)$ which is the usual noncommutative $L_p$-
space associated with $(\M,\tau)$. For convenience, we usually set
$L_{\infty}(\M)=\M$ equipped with the operator norm
$\|\cdot\|_{\M}$. We refer the reader to \cite{PiXu03} for more
information on noncommutative $L_p$-spaces.

\medskip

Let us recall the definition of the noncommutative maximal norm
introduced by Pisier \cite{Pis98} and Junge \cite{Jun02}. We define
$L_p(\M;\ell_\infty)$ to be the space of all sequences
$x=(x_n)_{n\ge1}$ in $L_p(\M)$ which admit a factorization of the
following form: there exist $a, b\in L_{2p}(\M)$ and a bounded
sequence $y=(y_n)$ in $L_\infty(\M)$ such that
 $$x_n=ay_nb, \quad \forall\; n\geq1.$$
The norm of  $x$ in $L_p(\M;\ell_\infty)$ is given by
 $$\|x\|_{L_p(\M;\ell_\infty)}=\inf\big\{\|a\|_{2p}\,
 \sup_{n\geq1}\|y_n\|_\infty\,\|b\|_{2p}\big\} ,$$
where the infimum runs over all factorizations of $x$ as above.

We will follow the convention adopted in  \cite{JuXu06}  that
$\|x\|_{L_p(\M;\ell_\infty)}$ is denoted by
 $\big\|\sup_n^+x_n\big\|_p\ .$ We should warn the reader that
$\big\|\sup^+_nx_n\big\|_p$ is just a notation since $\sup_nx_n$
does not make any sense in the noncommutative setting. We find,
however, that $\big\|\sup^+_nx_n\big\|_p$ is more intuitive than
$\|x\|_{L_p(\M;\ell_\infty)}$. The introduction of this notation is
partly justified by the following remark.

\begin{remark}%\label{rk:MaxFunct}\rm
Let $x=(x_n)$ be a sequence of self-adjoint operators in $L_p(\M)$.
Then $x\in L_p(\M;\ell_\infty)$ if and only if there exists a positive element
$a\in L_p(\M)$ such that $-a\leq x_n \le a$ for all $n\ge1$. In this
case we have
 $$\big \|{\sup_{n \ge1}}^{+} x_n \big \|_p=\inf\big\{\|a\|_p\;:\; a\in L_p(\M),\; -a\leq x_n\le a,\;\forall\; n\geq1\big\}.$$
\end{remark}

More generally, if $\Lambda$ is any index set, we define  $L_p (\M;
\ell_{\infty}(\Lambda))$ as the space of all $x =
(x_{\lambda})_{\lambda \in \Lambda}$ in $L_p (\M)$ that can be
factorized as
 $$x_{\lambda}=ay_{\lambda} b\quad\mbox{with}\quad a, b\in L_{2p}(\M),\; y_{\lambda}\in L_\infty(\M),\; \sup_{\lambda}\|y_{\lambda}\|_\infty<\infty.$$
The norm of $L_p (\M; \ell_{\infty}(\Lambda))$ is defined by
 $$\big \| {\sup_{{\lambda}\in\Lambda}}^{+} x_{\lambda}\big \|_p=\inf_{x_{\lambda}=ay_{\lambda} b}\big\{\|a\|_{2p}\,
 \sup_{{\lambda}\in\Lambda}\|y_{\lambda}\|_\infty\,\|b\|_{2p}\big\} .$$
It is shown in \cite{JuXu06} that $x\in L_p (\M;
\ell_{\infty}(\Lambda))$ if and only if
 $$\sup\big\{\big \| {\sup_{{\lambda}\in J}}^{+} x_{\lambda}\big \|_p\;:\; J\subset\Lambda,\; J\textrm{ finite}\big\}<\infty.$$
In this case, $\big \| {\sup_{{\lambda}\in\Lambda}}^{+}
x_{\lambda}\big \|_p$ is equal to the above supremum.

A closely related operator space is $L_p(\mathcal{M};\ell^c_{\infty})$ for $p\geq2$ which is the set of all sequences $(x_n)_n\subset L_p(\mathcal{M})$ such that
$$\|{\sup_{n\geq1}}^+|x_n|^2\|^{1/2}_{p/2}<\infty.$$
While $L_p(\mathcal{M};\ell^r_{\infty})$ for $p\geq2$ is the Banach space of all sequences $(x_n)_n\subset L_p(\mathcal{M})$ such that $(x^*_n)_n\in L_p(\mathcal{M};\ell^c_{\infty})$. All these spaces fall into the scope of amalgamated $L_p$ spaces intensively studied in \cite{JuPa10}. What we need about these spaces is the following interpolation results.
\begin{lemma}\label{lem:l8c+l8r}
Let $2\leq p\leq\infty$. Then we have
$$(L_{p}(\mathcal{M};\ell^c_{\infty}),L_{p}(\mathcal{M};\ell^r_{\infty}))_{1/2}=L_{p}(\mathcal{M};\ell_{\infty})$$
with equivalent norms.
\end{lemma}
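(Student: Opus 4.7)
The plan is to prove the two inclusions separately: the forward inclusion
$L_p(\mathcal{M};\ell_\infty) \hookrightarrow (L_p(\mathcal{M};\ell^c_\infty), L_p(\mathcal{M};\ell^r_\infty))_{1/2}$
by a direct three lines argument, and the reverse inclusion by duality, which reduces it to a companion interpolation identity for the $\ell_1$-valued spaces that falls under the amalgamated $L_p$-space machinery of Junge--Parcet \cite{JuPa10}.

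For the first inclusion, I would take $x = (x_n) \in L_p(\mathcal{M};\ell_\infty)$ with a near-optimal factorization $x_n = a y_n b$ in which $a, b \in L_{2p}(\mathcal{M})$ are positive and $\sup_n\|y_n\|_\infty \leq 1$, and consider the analytic family
$$F_n(z) = a^{2z}\, y_n\, b^{2(1-z)}, \qquad 0 \leq \Re z \leq 1.$$
The key observation is that, for positive $a, b$, the imaginary powers $a^{2it}$ and $b^{-2it}$ are partial isometries of norm at most $1$ in $\mathcal{M}$, while $|a^{2z}| = a^{2\Re z}$ and $|b^{2(1-z)}| = b^{2(1-\Re z)}$ carry the expected $L_p$-norms. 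Rewriting
$$F_n(it) = (a^{2it} y_n b^{-2it})\cdot b^2, \qquad F_n(1+it) = a^2 \cdot (a^{2it} y_n b^{-2it}),$$
exhibits $(F_n(it))_n$ as an element of $L_p(\mathcal{M};\ell^c_\infty)$ of norm at most $\|b\|_{2p}^2$ and $(F_n(1+it))_n$ as an element of $L_p(\mathcal{M};\ell^r_\infty)$ of norm at most $\|a\|_{2p}^2$. Applying the three lines lemma at $z = 1/2$ (in Calderón's geometric mean form) then gives $\|x\|_{(L_p^c, L_p^r)_{1/2}} \leq \|a\|_{2p}\|b\|_{2p}$, which after passing to the infimum over factorizations yields the desired inclusion.

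For the reverse inclusion, extracting a triple factorization $x_n = a y_n b$ with $a,b \in L_{2p}$ directly from an interpolating analytic function $F$ satisfying $F(1/2) = x$ is delicate, because the boundary row/column factorizations depend on the imaginary parameter $t$ and do not glue in any obvious way. I would instead argue by duality: combining the identifications $L_p(\mathcal{M};\ell^c_\infty)^* \simeq L_{p'}(\mathcal{M};\ell^c_1)$, $L_p(\mathcal{M};\ell^r_\infty)^* \simeq L_{p'}(\mathcal{M};\ell^r_1)$, and $L_p(\mathcal{M};\ell_\infty)^* \simeq L_{p'}(\mathcal{M};\ell_1)$ from \cite{JuXu06} with the duality of complex interpolation reduces the claim to the companion identity
$$(L_{p'}(\mathcal{M};\ell^c_1), L_{p'}(\mathcal{M};\ell^r_1))_{1/2} = L_{p'}(\mathcal{M};\ell_1), \qquad 1 < p' \leq 2,$$
which admits an analogous three lines argument on the $\ell_1$ side or may be quoted as a special case of the Kosaki-type interpolation for amalgamated $L_p$-spaces in \cite{JuPa10}. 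The hard part is precisely this reverse inclusion: the forward one is essentially formal, whereas the reverse rests on the noncommutative factorization/complementation results underlying the amalgamated $L_p$-space framework, and the cleanest route is to invoke \cite{JuPa10} directly rather than reprove those facts in self-contained form.
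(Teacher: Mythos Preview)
The paper does not give a proof of this lemma at all: it simply states the result as a consequence of the amalgamated $L_p$-space theory, citing \cite{JuPa10} (together with \cite{Jun02} and \cite{Mus03}) and moving on. So there is no ``paper's own proof'' to compare against beyond that citation.

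Your proposal is correct and is in fact more detailed than the paper's treatment. The forward inclusion via the analytic family $F_n(z)=a^{2z}y_n b^{2(1-z)}$ is the standard Calder\'on three-lines argument and works as written (after the usual reduction to positive $a,b$ by absorbing polar-decomposition unitaries into $y_n$, and an approximation to ensure analyticity on the strip). For the reverse inclusion, your duality strategy is also sound; one minor sharpening is that you do not need the full companion identity on the $\ell_1$ side, only the forward inclusion $L_{p'}(\mathcal{M};\ell_1)\hookrightarrow (L_{p'}(\mathcal{M};\ell^c_1),L_{p'}(\mathcal{M};\ell^r_1))_{1/2}$ together with bilinear interpolation of the duality pairing. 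That forward inclusion again admits a direct three-lines argument, so the proof can in principle be made self-contained, but since the paper itself simply defers to \cite{JuPa10} for the entire statement, your route is entirely consistent with---and more explicit than---what the paper does.
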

We refer the reader to \cite{Jun02}, \cite{Mus03} and \cite{JuPa10} for more properties on these spaces.

\subsection{$W$*-dynamical system}
Let $G$ be a locally compact second countable (lcsc) group. Let $(\M,\tau)$ be a von Neumann algebra equipped with a semifinite normal faithful trace $\tau$.  A $W$*-dynamical system is a quadruple $(\M,\tau,G,\gamma)$ in which $\gamma$ is a trace-preserving continuous homomorphism of $G$ into the group $Aut(\M)$ of automorphisms of $\M$ in the weak* topology. This means that for each $x\in\M$, the $\M$-valued function defined on $G$: $g\rightarrow\gamma(g)x$ is  $\sigma$-weakly continuous, and $\tau(\gamma(g)x)=\tau(x)$ for any $g\in G$ and $x\in\M\cap L_1(\M)$. Such homomorphism $\gamma$ will be called an action.

\medskip

Start with a $W$*-dynamical system $(\M,\tau,G,\gamma)$, we can build a new von Neumann algebra. Consider the representation $\rho$ of $\M$ on $L_2(G;L_2(\mathcal M))$ given by
$$\rho(x)\xi(g)=\gamma(g^{-1})(x)\xi(g).$$
Then the crossed product of $\M$ by $G$, denoted by $\M\rtimes_{\gamma}G$, is defined as the weak operator closure of $id_{\M}\otimes \lambda(G)$ and $\rho(\M)$ in $B(L_2(G;L_2(\mathcal M)))$, where $\lambda$ is the left regular representation of $G$. Trivially $\M$ and $\mathcal{L}(G)$, the group von Neumann algebra, can be viewed as von Neumann subalgebras of $\M\rtimes_{\gamma}G$. In this paper, we assume $G$ is unimodular, or equivalently, that the Haar measure is inversion invariant.  A generic element of $\M\rtimes_{\gamma}G$ can be formally written as
\begin{align*}
\int_{G}x_g\rtimes_{\gamma}\lambda(g)dg&=\int_G\rho(x_g)(id_{\M}\otimes\lambda(g))dg\\
&=\int_{G\times G\times G}(\gamma(h)x_g\otimes e_{h,h})(1_\M)\otimes e_{gh',h'}dgdhdh'\\
&=\int_{G\times G}\gamma(h^{-1})x_g\otimes e_{h,g^{-1}}dgdh\\
&=\int_{G\times G}\gamma(g^{-1})x_{gh^{-1}}\otimes e_{g,h}dgdh.
\end{align*}
There is a canonical trace on $\M\rtimes_{\gamma}G$ given by
$$\tau\rtimes\tau_G(x\rtimes\lambda(g))=\tau\otimes\tau_G(x\otimes\lambda(g))=\tau(x)\delta_{g=e},$$
where we denote by $\tau_G$ the canonical trance on $\mathcal{L}(G)$. The arithmetic in $\M\rtimes_\gamma G$ is given by
$$(x\rtimes_\gamma \lambda(g))^*=\gamma(g^{-1})(x^*)\rtimes_\gamma \lambda(g^{-1})$$
and
$$(x\rtimes_\gamma \lambda(g))(y\rtimes_\gamma \lambda(h))=(x\gamma(g)(y))\rtimes_\gamma \lambda(gh).$$

The group measure space mentioned in the introduction refers to a special case of the crossed product, that is, $\M=L_{\infty}(X,\mu)$. The crossed product can be viewed as a twisted version of the tensor product $\M\overline{\otimes}\mathcal{L}(G)$, but is very different in the sense that under the conditions that $\M$ is a commutative von Neumann algebra and $G$ is an abelian group, the tensor product remains a commutative algebra but the crossed product is not a commutative algebra any more.
\medskip

\subsection{Framework}

We formulate the framework for the study of the maximal ergodic theorem in noncommutative $L_p$ space for trace-preserving action of locally compact groups in the present (and subsequent) paper. Let $(\M,\tau,G,\alpha)$ be a $W$*-dynamical system with $\M$ equipped with a normal faithful trace $\tau$, and the action $\alpha$ being trace preserving. Then $\alpha$ is naturally extended to isometric automorphisms of $L_p(\M)$ for all $1\leq p\leq\infty$, which is still denoted by $\alpha$. As it is well-known, the weak*-continuity of $(\alpha(g))_{g\in G}$ on $\M$ induces the strong continuity of  $(\alpha(g))_{g\in G}$, i.e. for each $x\in L_p(\M)$ with $1\leq p<\infty$, the map $g\rightarrow \alpha(g)x$ is a continuous map from $G$ to $L_p(\M)$, where we take the norm topology on $L_p(\M)$.

Let $M(G)$ denote the Banach space of bounded complex Borel measures on $G$. For each $\mu\in M(G)$, there corresponds an operator $\alpha(\mu)$, with norm bounded by $\|\mu\|_1$ in every $L_p(\M)$, $1\leq p\leq\infty$, given by
$$\alpha(\mu) x=\int_G\alpha(g)xd\mu(g),\;\forall x\in L_p(\M).$$
This definition should be justified as follows. For any $ x\in L_p(\M)$ and $ y\in L_q(\M)$ where $1/p+1/q=1$, the function $g\rightarrow\langle \alpha(g)x,y\rangle$ is continuous on $G$, bounded by $\|x\|_p\|y\|_q$. Hence
$$\int_G\langle\alpha(g)  x,y\rangle d\mu(g)\leq\|\mu\|_1\|x\|_p\|y\|_q.$$
It follows the operator $\alpha(\mu) x$ is well defined and $\alpha(\mu) x$ is in $L_p(\M)$. Moreover, $\|\alpha(\mu) x\|_p\leq\|\mu\|_1\|x\|_p$ so that $\|\alpha(\mu)\|\leq\|\mu\|_1$.

As $G$ having been assumed to be unimodular. It is easy to check that $\mu\rightarrow \alpha({\mu})$ is a norm-continuous $*$-representation of the involutive Banach algebra $M(G)$ as an algebra of operators on $L_2(\M)$. We recall that the product in $M(G)$ is defined as convolution $\mu\ast\nu(f)=\int_G\int_Gf(gh)d\mu(g)d\nu(h)$, and the involution is $\mu^*(E)=\overline{\mu(E)^{-1}}$.

Denote by $P(G)$ the subset of probability measures in $M(G)$. Let $t\rightarrow v_t$ be a weakly continuous map from $\mathbb{R}_+$ to $P(G)$, namely $t\rightarrow v_t(f)$ is continuous for each $f\in C_c(G)$. We will refer to $(v_t)_{t>0}$ as a one-parameter family of probability measures. We can now formulate the following.

\begin{definition}\label{def:nc pointwise ergodic family}
A one-parameter family $(v_t)_{t>0}\subset P(G)$ will be called a global (resp. local) noncommutative maximal ergodic family in $L_p$ if for every trace preserving action $\alpha$ of $G$  on any noncommutative measure space $(\M,\tau)$, there exists a constant $c_p$ such that
$$\|{\sup_{t>1}}^+\alpha(v_t)x\|_p\leq c_p\|x\|_p\;(\mathrm{resp.}\;\|{\sup_{0<t\leq1}}^+\alpha(v_t)x\|_p\leq c_p\|x\|_p\;).$$
\end{definition}

\begin{remark}\label{re:birkhoff}
The basic example is the one-parameter family $(\mu_t)_{t>0}$ of probability measures on $G=\mathbb{R}$ given by
$\mu_t=\frac 1t\int^t_0\frac{1}{2}(\delta_s+\delta_{-s})ds$. This family is a both global and local noncommutative maximal ergodic family in every $L_p$ with $1<p<\infty$ which can be deduced from Junge and Xu's result \cite{JuXu06} on the noncommutative analogue of Dunford-Schwartz maximal ergodic theorem for semigroups (see also Lemma \ref{lem:p estimate for Mt} below) since the operator $\alpha(\mu_t)$ can be viewed as an average of two ergodic averages  $M_t$'s associated to the semigroups $(T_+^s)_{s>0}$ and  $(T_{-}^s)_{s>0}$, where $T_+^sx=\alpha_sx$ and $T_-^sx=\alpha_{-s}x$.

Also note that $\mu_t$ is the uniform average of the natural spherical measures $\sigma_s=\frac{1}{2}(\delta_s+\delta_{-s})$ on a sphere of radius $s$ with center 0.
\end{remark}

\subsection{Statement of results}
Let  $H=H^n=\mathbb{C}^n\times\mathbb{R}$ denote the Heisenberg group with group operation
$$(z,t)(w,s)=(z+w,t+s+\frac{1}{2}Im (z\cdot\bar{w})).$$
Let $S^n_{r,t}=\{(z,t):\;|z|=r\}$ be the sphere in the Heisenberg group $H^n$.  The first one-parameter family of probability measures we are concerned in the present paper is the normalized surface measures $\sigma_r$'s on the sphere $S^n_r:=S^n_{r,0}$.
The first main result of this paper is stated as follows:

\begin{theorem}\label{thm:maximal radial average}
The spherical means $\sigma_r$ on $H^n$ is a both global and local noncommutative maximal ergodic families in $L_p$, provided $n>1$ and $(2n-1)/(2n-2)<p\leq\infty$.
\end{theorem}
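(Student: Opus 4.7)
The strategy is to adapt the complex-analytic interpolation method of Narayanan--Thangavelu \cite{NaTh04} to the noncommutative setting. In outline, we embed $\sigma_r$ into a holomorphic family $\{\sigma_r^z\}_{z\in\mathbb{C}}$ of distributions on $H^n$, normalized so that $\sigma_r^0=\sigma_r$ and defined via the spectral decomposition of the sub-Laplacian and the central direction as in \cite{NaTh04}. At one endpoint of the interpolation (large positive $\mathrm{Re}(z)$) one obtains a polynomial-in-$|\mathrm{Im}(z)|$ bound for $\sup_r^+|\alpha(\sigma_r^z)x|$ on $L_p(\mathcal{M};\ell_\infty)$ for all $p>1$, by pointwise operator-domination of the kernel by a (smoothed) Heisenberg ball average and invocation of the noncommutative Dunford--Schwartz theorem of Junge--Xu \cite{JuXu06}. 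At the other endpoint (a critical negative line $\mathrm{Re}(z)=-\alpha_0$) one proves a sharp $L_2(\mathcal{M};\ell_\infty)$ maximal estimate, reconstituted through Lemma~\ref{lem:l8c+l8r} from its column and row versions. Since the family $L_p(\mathcal{M};\ell_\infty)$ forms a complex interpolation scale, Stein's complex-analytic interpolation applied to $\{\alpha(\sigma_r^z)\}$ then returns a bound at $z=0$ in the full optimal range $(2n-1)/(2n-2)<p\leq\infty$.

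The central technical step is the $L_2$ maximal estimate on the critical line. Setting $F(r)=\alpha(\sigma_r^z)x$, the plan is to use a noncommutative Sobolev-type trick: operator-valued Cauchy--Schwarz applied to the identity $F(r)=F(R)-\int_r^R F'(s)\,ds$ dominates $F(r)^*F(r)$ (and symmetrically $F(r)F(r)^*$) by a product of two square functions of the form
$$\Bigl(\int_0^\infty |F'(s)|^2\,w_1(s)\,ds\Bigr)^{1/2}\Bigl(\int_0^\infty |F(s)|^2\,w_2(s)\,ds\Bigr)^{1/2}$$
for appropriate weights $w_1,w_2$. Each such square function corresponds to a Fourier multiplier on $H^n$ applied through the action $\alpha$. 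Because $\alpha$ is trace preserving, a standard Plancherel-type transference identifies the $L_2(\mathcal{M})$-norm of each square function with its $L^2(H^n)$-norm, and the latter is controlled on the critical line by the Laguerre-function analysis of $\sigma_r^z$ developed in \cite{NaTh04}.

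Both the global and the local noncommutative maximal ergodic properties of Definition~\ref{def:nc pointwise ergodic family} follow simultaneously, as the interpolation controls the supremum over all $r>0$. One should also verify the column/row maximal estimates separately, which requires proving the Sobolev-type trick in the two ``one-sided'' forms compatible with the factorizations defining $L_p(\mathcal{M};\ell^c_\infty)$ and $L_p(\mathcal{M};\ell^r_\infty)$.

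The main obstacle is the rigorous execution of the noncommutative Sobolev reduction and the Plancherel-type transference while preserving both the column/row structure and the polynomial-in-$|\mathrm{Im}(z)|$ control that Stein's interpolation requires: any loss at these steps would erode the optimal threshold $(2n-1)/(2n-2)$. In particular, one must ensure that the operator-valued Cauchy--Schwarz step produces a product of two square functions whose multiplier symbols satisfy exactly the Laguerre-type $L^2$ bounds of \cite{NaTh04} with the correct regularity in $z$, so that the sharp exponent survives the NC adaptation.
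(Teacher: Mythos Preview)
Your overall architecture---Stein complex interpolation between a $p>1$ endpoint obtained via the noncommutative Dunford--Schwartz theorem and a critical $L_2$ maximal estimate---coincides with the paper's. However, several substantive details diverge, and one step as you have written it does not work.

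First, the paper does not follow \cite{NaTh04}; it follows \cite{NeTh97}. This is not cosmetic: the paper explicitly notes that the method of \cite{NaTh04} relies on the Fefferman--Stein vector-valued maximal inequality, which remains open in the noncommutative setting, and this is precisely why the threshold $(2n-1)/(2n-2)$ is reached rather than the sharp $2n/(2n-1)$. (So your phrase ``full optimal range'' is a misnomer.) If your adaptation of \cite{NaTh04} genuinely avoids Fefferman--Stein while still landing on the critical line that yields $(2n-1)/(2n-2)$, you should say how; otherwise the proposal hides an obstruction.

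Second, the analytic family the paper uses is not a spectral/Laguerre embedding of $\sigma_r$ but the Riemann--Liouville fractional integrals $M^aF_x(r)=r^{-a}I^aF_x(r)$ of the operator-valued function $F_x(r)=\alpha(\sigma_r)x$. The $p>1$ endpoint at $\mathrm{Re}(a)\ge 1$ is reduced to the \emph{uniform spherical average} $\mu_r=r^{-1}\int_0^r\sigma_s\,ds$ (not a Heisenberg ball average); this is handled by writing $\alpha(\mu_r)x=\int_{S_1^n}\frac1r\int_0^r\alpha(sv)x\,ds\,d\sigma_1(v)$ and applying Junge--Xu along each ray $s\mapsto sv$.

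Third, the $L_2$ step differs most from your sketch. The paper does not use a column/row decomposition or Lemma~\ref{lem:l8c+l8r} here (that device is reserved for the reduced Heisenberg group in Section~6). Instead, at the integer value $a=-n+2$ one compares $\alpha(\sigma_r)x$ with $\alpha(\mu_r)x$ via integration by parts and bounds the difference by the $g$-function $g_m(x)=\bigl(\int_0^\infty r^{2m-1}\,\bigl|\tfrac{d^m}{dr^m}\alpha(\sigma_r)x\bigr|^2\,dr\bigr)^{1/2}$; the $L_2$ bound on $g_m$ follows directly from the spectral decomposition $\alpha(\sigma_r)=\int_{\Sigma_\alpha}\varphi_\zeta(r)\,de_\zeta$ together with the uniform spherical-function estimate $\sup_{\zeta\in\Sigma}\int_0^\infty\bigl|\tfrac{d^m}{dr^m}\varphi_\zeta(r)\bigr|^2r^{2m-1}\,dr<\infty$ from \cite{NeTh97}. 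There is no ``Plancherel transference'': the point is simply that $\Sigma_\alpha\subset\Sigma$ and the estimate is uniform on $\Sigma$. Finally, your Sobolev identity $F(r)=F(R)-\int_r^R F'(s)\,ds$ after Kadison--Schwarz yields a \emph{sum} of an $\int|F|^2$ term and an $\int|F'|^2$ term (this is exactly the paper's Lemma~\ref{lem:pointwise control}), not the product of two square functions you display; as written, that bound is incorrect.
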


%We follow the spectral methods, which has been used by Nevo and Thangavelu in \cite{NeTh97}, to prove this theorem. For this reason, we shall recall some related aspects of the %spectral theory in the next section. But due to the noncommutativity, we meet many difficulties. Some of them have been overcome by Junge and Xu \cite{JuXu06}, but others are %new. For instance, we use a result in dynamical system to find smooth elements, a notion from noncommutative geometry, to deal with $(d/dr)\alpha(\sigma_r)$. See Section 5 for %the whole proof.

As it is well-known that the scale of $p$ in Theorem \ref{thm:maximal radial average} may not be optimal, since the best lower bound of $p$ is $2n/(2n-1)$ whenever the underlying von Neumann algebra is commutative. Narayanan and Thangavelu obtained the optimal result first on $\mathbb{H}^n$, then on general measure spaces using transference principle. Even though the transference principle has been establish by the author in \cite{Hon}, another main tool used by Narayanan and Thangavelu---Fefferman-Stein vector-valued maximal inequality---remains an open problem in the noncommutative setting. So we can not improve the lower bound of $p$ to $2n/(2n-1)$. However, in the reduced Heisenberg group case, we do obatin the optimal lower bound. 

The center of $H^n$ is denoted by $Z(H^n)=\{(0,t):t\in\mathbb{R}\}$ which is isomorphic to $\mathbb{R}$. The reduced Heisenberg group is defined as $\bar{H}^n=H^n/Z_1$, where $Z_1=\{(0,n):n\in\mathbb{Z}\}$ is the discrete subgroup of $Z(H^n)$. Another one-parameter family of  probability measures we are concerned is the spherical means $\bar{\sigma}_r$ on $\bar{S}^n_{r}:=\bar{S}^n_{r,0}=\{(z,0):\;|z|=r\}$.

\begin{theorem}\label{thm:maximal radial average reduced}
The spherical means $\bar{\sigma}_r$ on $\bar{H}^n$ is a global noncommutative maximal ergodic families in $L_p$, provided $n>1$ and $2n/(2n-1)<p\leq\infty$.
\end{theorem}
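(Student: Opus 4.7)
The plan is to follow the strategy of Narayanan--Thangavelu, but execute it within the noncommutative framework developed above and crucially exploit the compactness of the center of $\bar H^n$. First, by the noncommutative transference principle of \cite{Hon}, it suffices to prove the maximal inequality for the left regular action of $\bar H^n$ on $L_p(\mathcal{N}\,\overline{\otimes}\, L_\infty(\bar H^n))$, for an arbitrary von Neumann algebra $\mathcal N$ equipped with a normal semifinite faithful trace. The center of $\bar H^n$ is the circle $\mathbb T=\mathbb R/\mathbb Z$, so every such $f$ decomposes as $f(z,t)=\sum_{k\in\mathbb Z}e^{2\pi ikt}f_k(z)$ in the central variable, and correspondingly $\bar\sigma_r\ast f=\sum_{k\in\mathbb Z}e^{2\pi ikt}(\bar\sigma_r^{(k)}f_k)$, where $\bar\sigma_r^{(k)}$ denotes the $k$-twisted spherical convolution on $\mathbb C^n$. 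This discrete decomposition is the structural advantage absent in the full Heisenberg case: summing the maximal norms over modes now requires only the triangle inequality plus summable decay in $|k|$, and hence sidesteps the noncommutative Fefferman--Stein vector-valued maximal inequality that remains open.

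For each fixed mode $k\neq 0$, I would introduce an analytic family $M_r^{(k),z}$ of operators such that $M_r^{(k),0}$ equals (up to rescaling) $\bar\sigma_r^{(k)}$, with $M_r^{(k),z}$ diagonal in the special Hermite/Laguerre basis and given by an explicit Laguerre multiplier. At $\operatorname{Re} z>0$ the operator is bounded on $L_\infty$ uniformly in $k$ and $r$, while on the critical line $\operatorname{Re} z=-(n-1)$ the classical Strichartz-type estimate for twisted spherical convolutions yields the $L_2\to L_2$ decay $\lesssim|k|^{-(n-1/2)}\langle\operatorname{Im} z\rangle^N$. To upgrade this to an $L_2$-maximal estimate, I would invoke the noncommutative analogue of Stein's square-function trick, bounding $\big\|{\sup_{r>0}}^+ M_r^{(k),z}f\big\|_{L_2(\,\cdot\,;\ell_\infty^c)}$ by the product of $L_2$-norms of $M_r^{(k),z}f$ and $r\partial_r M_r^{(k),z}f$ integrated against $dr/r$, each in turn controlled via Junge--Xu's noncommutative Stein maximal inequality applied to the heat semigroup of the twisted Laplacian. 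Running the same argument in the row variant and interpolating via Lemma~\ref{lem:l8c+l8r} then yields the bilateral $L_2(\,\cdot\,;\ell_\infty)$-bound with the same $|k|^{-\alpha}$ decay.

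Complex interpolation between these two endpoints produces, at $z=0$ and for every $p>2n/(2n-1)$, an estimate $\big\|{\sup_{r>0}}^+\bar\sigma_r^{(k)}f_k\big\|_p\lesssim |k|^{-\beta}\|f_k\|_p$ with some $\beta=\beta(p)>0$; absolute summation over $k\in\mathbb Z$, combined with Plancherel on the central circle, recovers the bound on $\big\|{\sup_{r>0}}^+\bar\sigma_r\ast f\big\|_p$. The remaining $k=0$ mode reduces to the Euclidean spherical maximal operator on $\mathbb C^n\cong\mathbb R^{2n}$, handled by the same interpolation/square-function scheme but with no $k$-decay (and no $k$-summation) needed, and is $L_p$-bounded exactly for $p>2n/(2n-1)$. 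The main obstacle I expect is the $L_2$-maximal bound with genuine $|k|^{-\alpha}$ decay in the noncommutative setting: each step of the square-function argument must be produced in an operator-valued form compatible with Lemma~\ref{lem:l8c+l8r} and with analytic interpolation, so that the classical Laguerre-type decay survives uniformly in the auxiliary algebra $\mathcal N$.
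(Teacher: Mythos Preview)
Your opening moves match the paper exactly: transference to $L_p(L_\infty(\bar H^n)\overline{\otimes}\mathcal N)$, splitting off the zero central mode (which reduces to the Euclidean spherical maximal on $\mathbb R^{2n}$), and attacking the rest via an analytic family built from the Laguerre spectrum together with Stein's complex interpolation. But from that point on your route and the paper's diverge, and your version contains a genuine gap.

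The gap is in the mode-by-mode summation over the central frequency. You propose to prove, for each fixed $\lambda\in\mathbb Z\setminus\{0\}$ (your ``$k$''), an estimate
\[
\big\|{\sup_{r}}^{+}\bar\sigma_r^{(\lambda)}f_\lambda\big\|_p\lesssim |\lambda|^{-\beta(p)}\|f_\lambda\|_p,
\]
and then sum. But run the interpolation: with the $L_2$ endpoint at $\mathrm{Re}\,z=-(n-1)$ carrying your claimed decay $|\lambda|^{-(n-1/2)}$, and the other endpoint at $\mathrm{Re}\,z=1$ carrying no decay, the value $z=0$ corresponds to $1-\theta=1/n$, so the interpolated exponent is $\beta(p)=(n-1/2)/n=1-1/(2n)<1$ as $p\downarrow 2n/(2n-1)$. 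Then $\sum_{\lambda\neq0}|\lambda|^{-\beta}$ diverges, and ``Plancherel on the central circle'' does not rescue you for $p\neq 2$: there is no usable relation between $\|f\|_p$ and $(\|f_\lambda\|_p)_\lambda$ beyond the trivial $\|f_\lambda\|_p\le\|f\|_p$. A secondary problem is your choice of an $L_\infty$ endpoint at $\mathrm{Re}\,z>0$: interpolating $L_2$ with $L_\infty$ never reaches $p<2$, so you cannot approach the critical index $2n/(2n-1)$ this way. You need an $L_q$ bound with $q$ close to $1$ at the $\mathrm{Re}\,z\ge 1$ end.

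The paper avoids the summation problem entirely by \emph{not} decomposing mode-by-mode in $\lambda$. After separating $f=f_B+f_L$ (just two pieces: the zero mode and everything else), it treats $f_L$ as a whole via a single global analytic family $\bar M^a_r$ defined through the Laguerre multipliers $\psi_k^{n-1+a}(\sqrt{|\lambda|}\,r)$. The $L_p$ endpoint at $\mathrm{Re}\,a\ge1$ is obtained for every $1<p\le\infty$ (using a Poisson-semigroup domination plus the uniform-average maximal inequality), and the $L_2$ maximal endpoint is proved for every $\mathrm{Re}\,a>-(n-1)$. The key device at $L_2$ is a dyadic decomposition in the \emph{joint} spectral parameter $|\lambda|k$ (with $k$ the Laguerre index), namely $\Sigma_j=\{(\lambda,k):|\lambda|k\in[2^j,2^{j+1})\}$, together with a noncommutative pointwise-control lemma of Sobolev type. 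This yields summable decay $2^{-\delta'j}$ in $j$, and the sum over $j$ is taken \emph{inside} the $L_2$ argument via Plancherel on the full spectrum, so no triangle-inequality loss in $\lambda$ ever occurs. In short, the discreteness of the center is exploited not by summing modes but by ensuring $|\lambda|\ge1$ and $r\ge1$ so that the Laguerre-function argument $\sqrt{|\lambda|}\,r$ stays in the regime where the asymptotics bite.
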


Both the proof of the two maximal ergodic theorems depend on the spectral theory. That the related spectral estimates is not sharp stop us from obtaining optimal scale in Theorem {thm:maximal radial average}; to obtain the sharp estimates in the reduced Heisenberg roup, we use the discreteness of the Largurre spectrum to sharpen the estimates and optimal scale of $p$ of spherical maximal inequality in $\mathbb R^{2n}$ \cite{Hon14}.  

Based on the two results, we get the maximal ergodic theorem for the tensor/crossed product extensions.
Let $(X,m)$ be a standard measure space on which $H^n$ acts measurably by measure preserving transformations, and let $\pi(\sigma_r)$ denote the operator canonically associated with $\sigma_r$ on $L_p(X)$.

\begin{corollary}\label{cor:product extension theorem}
Let $n>1$. We have the following results:
\begin{enumerate}[\rm (i)]
\item The triple $(L_{\infty}(X)\overline{\otimes}\M,H^n,\pi\otimes id_\M)$ is a $W$*-dynamical system. Hence we have the maximal ergodic theorem in $L_p$ with $(2n-1)/(2n-2)<p\leq\infty$, that is, there exists a universal constant $C_p>0$ such that
    \begin{align*}
      \|{\sup_{r>0}}^+\pi({\sigma}_r)\otimes id_\M(f)\|_p\leq C_{p}\|f\|_p,\;\forall f\in L_p(L_{\infty}(X)\overline{\otimes}\M).
    \end{align*}
\item The triple $(L_{\infty}(X)\rtimes_\gamma G,H^n,\pi\rtimes_\gamma id_G)$ is a $W$*-dynamical system provided that $\pi$ and $\gamma$ commute, that is, $\pi(\ell)\gamma(g)=\gamma(g)\pi(\ell)$ for any $\ell\in H^n$ and $g\in G$. Hence we have the maximal ergodic theorem in $L_p$ with $(2n-1)/(2n-2)<p\leq\infty$.
\item The triple $(L_{\infty}(H^n)\rtimes_R U(n),H^n,T\rtimes_R id_{U(n)})$ is not a $W$*-dynamical system, where $T$ denotes the translation action and $R$ denotes the rotation action. But we still have the maximal ergodic theorem in $L_p$ with $(2n-1)/(2n-2)<p\leq\infty$.
\item Simiar statements in \rm{(i)-(iii)} hold for $2n/(2n-1)<p\leq\infty$ by replacing $H^n$ with $\bar{H}^n$.
\end{enumerate}
\end{corollary}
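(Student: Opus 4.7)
The plan is to deduce (i)--(iv) from Theorems \ref{thm:maximal radial average} and \ref{thm:maximal radial average reduced} via the $W^*$-dynamical system framework of Definition \ref{def:nc pointwise ergodic family}, with an auxiliary reduction needed for part (iii).

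For (i), I would verify that $L_\infty(X)\overline{\otimes}\M$ with the tensor-product trace $m\otimes\tau$ carries a $W^*$-dynamical system structure under $\pi\otimes id_\M$: the trace is preserved because $\pi$ preserves $m$, and weak*-continuity of $\ell\mapsto\pi(\ell)\otimes id_\M$ reduces to that of $\pi$ itself, which follows from measurability of the $H^n$-action on $X$. Since $(\pi\otimes id_\M)(\sigma_r) = \pi(\sigma_r)\otimes id_\M$, the bound is immediate from Theorem \ref{thm:maximal radial average}. Part (ii) proceeds the same way once I check that the commuting hypothesis $\pi(\ell)\gamma(g) = \gamma(g)\pi(\ell)$ is exactly what is needed for the map $x\rtimes_\gamma\lambda(g)\mapsto \pi(\ell)x\rtimes_\gamma\lambda(g)$ to respect the crossed-product relation $(x\rtimes\lambda(g))(y\rtimes\lambda(h)) = x\gamma(g)(y)\rtimes\lambda(gh)$; trace preservation and weak*-continuity are inherited. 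Part (iv) is identical with Theorem \ref{thm:maximal radial average reduced} in place of Theorem \ref{thm:maximal radial average}.

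Part (iii) is more delicate because the translation and rotation actions satisfy $R(u)T(z,t) = T(R(u)(z,t))R(u)$, so $T$ does not descend to an action on the crossed product. The key observation is that $\sigma_r$ is $U(n)$-invariant on $H^n$; integrating the commutation relation against $\sigma_r$ and substituting $z\mapsto uz$ yields $R(u)T(\sigma_r) = T(\sigma_r)R(u)$ for every $u\in U(n)$. This $U(n)$-equivariance makes $T(\sigma_r)\rtimes_R id_{U(n)}$ well-defined as a normal completely positive map on $L_\infty(H^n)\rtimes_R U(n)$. To get the maximal inequality, I would embed $L_\infty(H^n)\rtimes_R U(n)$ as a von Neumann subalgebra of $L_\infty(H^n)\overline{\otimes} B(L_2(U(n)))$ via the canonical representation $\rho$ on $L_2(U(n);L_2(H^n))$. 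Under this embedding the operators $T(\sigma_r)\rtimes id$ extend to $T(\sigma_r)\otimes id$, and part (i) applied with $\M = B(L_2(U(n)))$ gives the maximal inequality on the tensor product; restricting back through the trace-preserving conditional expectation onto the subalgebra transfers the inequality.

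The main obstacle, as I see it, is the last step of part (iii): checking that the inclusion $L_\infty(H^n)\rtimes_R U(n)\hookrightarrow L_\infty(H^n)\overline{\otimes} B(L_2(U(n)))$ is compatible at the level of the $L_p(\cdot;\ell_\infty)$-norm, so that the Pisier--Junge factorization $x_r = a y_r b$ of the maximal operator on the larger algebra restricts cleanly to a factorization in the subalgebra. This comes down to a careful normalization of Haar measure on $U(n)$ against the standard trace on $B(L_2(U(n)))$ so that the traces match on the subalgebra, together with the fact that the conditional expectation onto $L_\infty(H^n)\rtimes_R U(n)$ (given by averaging against the dual $U(n)$-action) is a contraction on every $L_p$ and carries factorizations back into the subalgebra.
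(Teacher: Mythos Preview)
Your approach coincides with the paper's: (i) and (ii) are dismissed there in one line as trivial consequences of Theorem~\ref{thm:maximal radial average}, and (iii) is handled exactly by the restriction-from-tensor-product argument you outline (the paper says only that $T(\sigma_r)\rtimes_R id_{U(n)}$ is the restriction of $T(\sigma_r)\otimes id_{B(L_2(U(n)))}$ to the crossed product, then invokes (i)).

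One correction to your last paragraph: the conditional expectation $E$ onto $L_\infty(H^n)\rtimes_R U(n)$ does \emph{not} literally carry a factorization $x_r=ay_rb$ back into the subalgebra, since $E(ayb)\neq E(a)E(y)E(b)$ in general, so that phrasing should be replaced. The inequality you need---that the $L_p(\,\cdot\,;\ell_\infty)$-norm computed in the subalgebra is dominated by the one computed in the ambient tensor product---follows more cleanly either by duality (the predual inclusion $L_{p'}(\mathcal N;\ell_1)\hookrightarrow L_{p'}(\mathcal M;\ell_1)$ is contractive whenever there is a trace-preserving expectation, so its adjoint $L_p(\mathcal N;\ell_\infty)\hookrightarrow L_p(\mathcal M;\ell_\infty)$ is isometric), or, since $\sigma_r$ is a probability measure, by reducing to positive $f$ and using Remark~2.1: a bound $0\le (T(\sigma_r)\otimes id)f\le a$ in the tensor product yields $0\le (T(\sigma_r)\rtimes id)f = E\big((T(\sigma_r)\otimes id)f\big)\le E(a)$ with $\|E(a)\|_p\le\|a\|_p$. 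Your instinct about normalizing the traces so that the inclusion is trace-preserving is correct and does need to be checked; the paper is silent on this point.
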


Corollary \ref{cor:product extension theorem} (i) and (ii) follow trivially from Theorem \ref{thm:maximal radial average}. And (iii) follows from the easy fact that the operators $T(\sigma_r)\rtimes_R id_{U(n)}$ is the restriction of $T(\sigma_r)\otimes id_{B(L_2(U(n)))}$ to $L_{\infty}(H^n)\rtimes_R U(n)$ and (i).

The rest part of the paper is organized as follows. In next section, we recall the spherical spectral theory on Heisenberg group which plays essential role in our proof. In Section 4, we recall the mean ergodic theorem, which will be used for maximal and individual ergodic theorems. Section 5 (resp. Section 6) is devoted to the proof of Theorem \ref{thm:maximal radial average} (resp. Theorem \ref{thm:maximal radial average reduced}).
As in \cite{JuXu06} and \cite{Hu08}, the noncommutative maximal ergodic theorems may be used to decude the corresponding noncommutative individual ergodic theorems. In the last section, we establish the individual ergodic theorems. Throughout this paper, $C$ denotes a universal constant, may varying
from line to line.

\section{Spherical spectral theory}

\subsection{Spherical functions}
Let $K_n:=U(n)$ denote the unitary group. The Heisenberg motion group $G_n:=H^n\rtimes U(n)$ with group law of $G_n$ given by
$$(z_1,t_1,k_1)(z_2,t_2,k_2)=(z_1+k_1z_2, t_1+t_2+\frac12Im (z_1\cdot\overline{k_1z_2}),k_1k_2).$$
The inverse of an element $(z,t,k)\in G_n$ is given by $(-k^{-1},-t,k^{-1})$ and $(0,0,I)$ serves as the identity where $I$ is the $n\times n$ identity matrix. Then $K_n$ and $H^n$ are isomorphic to two subgroups of $G_n$. 

A function $f$ on $H^{n}$ is said to be radial if $f$ is a function of $|z|$ and $t$, equivalently $f(kz,t)=f(z,t)$ for all $(z,t)\in H^n$ and $k\in K_n$. Let $L^1(H^n,K_n)$ denote the subspace of radial functions in $L^1(H^n)$. This space is canonically identical with $L^1(G_n,K_n)$, the  subspace of bi-$K_n$-invariant functions in $L^1(G_n)$. The radial functions on $H^n$ form a commutative convolution algebra, since this algebra is canonically isomorphic to the algebra of bi-$K_n$-invariant functions on $G_n$ and it is well-known (see e.g. \cite{FaHa87}, \cite{Str91}) that $(G_n,K_n)$ form a Gelfand pair.

Let $M_n:=M(H^n,K_n)$ denote the norm-closed convolution algebra generated by the surface measures $\{\sigma_{r,t}\}_{r>0,t\in\mathbb{R}}$ in $M(H^n)$. $M_n$ is canonically isomorphic to the algebra generated by $\{\sigma'_{r,t}\}_{r>0,t\in\mathbb{R}}$ on the corresponding spheres in $M(G_n)$.

As it is well-known (see e.g. \cite{BJR92}), the complex homomorphisms of a Gelfand pair $L^1(G_n,K_n)$ are given by bounded spherical functions. Bounded $K_n$-spherical functions on $G_n$ are characterized by satisfying $\phi(e)=1$ and the integrals equation
$$\int_K\phi(akb)dk=\phi(a)\phi(b),\;a,b\in G.$$
There are two families of spherical functions. One family is parametrised by $\lambda\in\mathbb{R}$, $\lambda\neq0$ and $k\in\mathbb{N}$. They are given by
$$e^{\lambda}_k(z,t)=\frac{k!(n-1)!}{(k+n-1)!}e^{i\lambda t}\varphi^\lambda_k(z),$$
where
$$\varphi^\lambda_k(z)=L^{n-1}_k(\frac{1}{2}|\lambda||z|^2)e^{-\frac{1}{4}\lambda||z|^2},$$
$L^{n-1}_k(t)$ being Laguerre polynomials of type $n-1$. Another family is given for each $u>0$ by
$$\eta_u(z,t)=\frac{2^{n-1}(n-1)!}{(u|z|)^{n-1}}J_{n-1}(u|z|)$$
where $J_{n-1}$ is the Bessel function of order $n-1$, and for $u=0$, the spherical function $\eta_0(z,t)=1$ identically.

Thus the Gelfand spectrum $\Sigma$ of the algebra $L^1(G_n,K_n)$, equivalently the algebra $L^1(H^n,K_n)$, is the union of the Laguerre spectrum $\Sigma_L=\mathbb{R}\setminus \{0\}\times\mathbb{N}$, the Bessel spectrum $\Sigma_B=(0,\infty)$ and the trivial character $\{0\}$. For any $\zeta\in\Sigma$, let $\varphi_\zeta$ be the spherical function $e^\lambda_k$, or $\eta_u$, or $\eta_0=1$ as the case may be. In what follows $\varphi_\zeta(r)$ stands for $\varphi_\zeta(z,0)$ with $|z|=r$.

Let us firstly collect some facts on the spectral theory of $M_n$ in the following lemma, which has been shown in \cite{NeTh97} and will be used in this paper.

\begin{lemma}\label{lem:spectral theory}
{\rm (i)}. $\sigma_{r_1,t_1}\ast\sigma_{r_2,t_2}$ is a probability measure absolutely continuous with respect to Haar measure on $H^n$, unless $n=1$, where the convolution of at least three spherical measures is absolutely continuous.

{\rm (ii)}. Finite linear combinations of functions of the form $\sigma_{r_1}\ast\sigma_{r_2}\ast\dotsm\ast\sigma_{r_k}$ where $r_i>0$ and $k\geq3$ are dense in $L^1(H^n,K_n)$, and thus $L^1(H^n,K_n)\subset M_n$.

{\rm (iii)}. The group generated by $S^n_r$ in $H^n$ equals $H^n$.

{\rm (iv)}. Let $\psi$ be a non-zero continuous complex homomorphism of $M_n$. Then the radial function $\varphi(g)=\psi(m_{K_n}\ast\delta_g\ast m_{K_n})$ equals $\varphi_\zeta(g)$ for some $\zeta\in\Sigma$. Whence, restriction of complex characters from $M(H^n,K_n)=M_n$ to its subalgebra $L^1(H^n,K_n)$ induces a canonical identification of the Gelfand spectrum of the two algebras.
\end{lemma}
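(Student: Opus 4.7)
The plan is to combine an explicit analysis of the Heisenberg group multiplication with the Gelfand-pair theory of $(G_n,K_n)$. Parts (i) and (ii) are smoothing and density statements about convolutions of spherical measures, (iii) is a direct group-generation computation, and (iv) will follow from (ii) together with the spherical function theory recalled just before the lemma.

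For (i), I would view $\sigma_{r_1,t_1}*\sigma_{r_2,t_2}$ as the push-forward of the product of the normalized surface measures on $S^n_{r_1,t_1}\times S^n_{r_2,t_2}$ under the multiplication map $((z_1,t_1),(z_2,t_2))\mapsto(z_1+z_2,\,t_1+t_2+\tfrac12\mathrm{Im}(z_1\bar z_2))$. The image has dimension at most $2(2n-1)=4n-2$, which meets or exceeds $\dim H^n=2n+1$ exactly when $n\ge 2$. For $n\ge 2$ one verifies by a direct Jacobian computation that the multiplication map is a submersion on a nonempty open subset of $S^n_{r_1}\times S^n_{r_2}$, so the push-forward has an $L^1$ density on its image; $K_n$-symmetry then propagates this to absolute continuity on all of $H^n$. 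For $n=1$ two convolutions still live in a $2$-dimensional subset of the $3$-dimensional $H^1$, so one instead analyzes the three-fold convolution map $(S^1_{r_1})^3\to H^1$ and shows that the twist in the Heisenberg law supplies the missing transverse direction at generic points.

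With (i) in hand, the triple convolutions $\sigma_{r_1}*\sigma_{r_2}*\sigma_{r_3}$ are bi-$K_n$-invariant $L^1$ functions; smearing them against a test function $h\in C_c((0,\infty)^3)$ concentrated near a triple where the convolution density is nonzero yields an approximate identity in $L^1(H^n,K_n)$, and such a triple exists since by (iii) these products cover a neighborhood of the identity as the $r_i$ vary. This gives (ii). For (iii) a short calculation with the group law produces the commutator identity $[(z,0),(w,0)]=(0,\mathrm{Im}(z\bar w))$; taking $z,w\in S^n_r$ makes $\mathrm{Im}(z\bar w)$ attain every value in $[-r^2,r^2]$, and iterated commutators then fill the whole center $Z(H^n)$, while two-fold products of elements of $S^n_r$ cover the ball $\{|z|\le 2r\}$ in the horizontal plane modulo the center, so the entire group is generated.

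For (iv), any nonzero continuous character $\psi$ of $M_n$ restricts, by (ii), to a nonzero continuous character of the subalgebra $L^1(H^n,K_n)$. Under the canonical identification $L^1(H^n,K_n)\cong L^1(G_n,K_n)$ via $K_n$-averaging, the Gelfand-pair theorem for $(G_n,K_n)$ identifies such characters with integration against the bounded $K_n$-spherical functions $\varphi_\zeta$, $\zeta\in\Sigma$, listed above; the formula $\varphi(g)=\psi(m_{K_n}*\delta_g*m_{K_n})$ is then just the statement that bi-$K_n$-averaging is the projection onto the radial subalgebra, transported back to $M_n$ by means of (ii). I expect the main obstacle to be the $n=1$ case of (i): there the dimension count is only tight after three convolutions, and the submersion/transversality property must be extracted by a careful argument that exploits precisely the twist in the Heisenberg group law rather than mere dimension counting.
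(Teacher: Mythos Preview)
Your proposal is correct and aligns with the paper's treatment, which does not give a self-contained proof but instead cites Stempak \cite{Ste85} for (i) and Nevo--Thangavelu \cite{NeTh97} for the full lemma, sketching only that (ii) follows from (i) via an approximate identity, (iii) from (i), and (iv) from (ii) plus spherical-function calculations. Your outline for (i), (ii), and (iv) matches this route essentially verbatim.

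The one genuine difference is in (iii): the paper deduces it from (i) by observing that the support of $\sigma_r^{\ast 6}$ contains an open neighbourhood of the identity (absolute continuity of the iterated convolution forces a set of positive Haar measure in the support, and then the semigroup generated by $S^n_r$ is open, hence all of $H^n$). You instead argue directly from the group law via the commutator identity $[(z,0),(w,0)]=(0,\mathrm{Im}(z\bar w))$ and the range of $z+w$ for $z,w\in S^n_r$. Your route is more elementary and independent of (i), which is a mild advantage; the paper's route is shorter once (i) is in hand and explains structurally why the smoothing property already encodes generation. Both are valid.
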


 The fundamental property is the one stated in (i), which has previously been shown by Stempak in \cite{Ste85}. The property (ii) is deduced from (i) using approximation identity. Property (iii) follows easily from (i) by noting the support of $\sigma^{\ast6}_r$ contains an open neighbourhood of the identity. Property (iv) follows from (ii) combining some calculations on the spherical functions. We refer the reader to \cite{NeTh97} for more details.

Lemma \ref{lem:spectral theory} enables us to do the following arguments. Being a weak $*$-continuous action of $H^n$ on a von Neumann algebra $\M$, $\alpha$ induces a strongly continuous unitary representation of $H^n$ on $L_2(\M)$.  Then $\alpha$ determines canonically a norm continuous $*$-representation of the algebra $M(H^n,K_n)$. Let us denote by $\mathcal{A}_{\alpha}$ the commutative $C^*$-algebra which is the closure of $\alpha(M_n)$ in the operator norm. Let $\Sigma_\alpha$ denote the spectrum of $\mathcal{A}_\alpha$ which is by definition the set of all non-zero norm continuous complex homomorphisms of $\mathcal{A}_\alpha$.
Clearly $\Sigma_\alpha$ is a subset of the Gelfand spectrum of $M(H^n,K_n)$. Consequently, every symmetric (self-adjoint) measure $\mu$ in $M(H^n,K_n)$ is mapped to a self-adjoint operator on $L_2(\M)$, whose spectrum is the set $\{\varphi_\zeta(\mu):\;\zeta\in\Sigma_{\alpha}\}$. As a consequence, we have the following spectral decomposition,
\begin{align}\label{spectral decomposition}
\alpha(\sigma_r)=\int_{\zeta\in\Sigma_\alpha}\varphi_\zeta(r)de_{\zeta},
\end{align}
which is the starting point for our proof of the radial ergodic theorems using spectral methods.

\section{Mean ergodic theorem}
The mean ergodic theorem has been essentially proved in \cite{NeTh97}. But we prefer to recall it for completeness. On the other hand, some new terminologies appear. Let $(\M,H^n,\alpha)$ be a $W$*-dynamical system.
Let $\mathcal{F}=\{x\in\M:\;\alpha(g)x=x\;,\forall g\in G\}$. It is easy to show that $\mathcal{F}$ is a von Neumann subalgebra of $\M$ as in the semigroup case \cite{JuXu06}, thus there exists a unique conditional expectation $F:\;\M\rightarrow\mathcal{F}$. Moreover, this condition expectation extends naturally from $L_p(\M)$ to $L_p(\mathcal{F})$ with $1\leq p<\infty$, which are still denoted by $F$.

\begin{theorem}\label{thm:mean ergodic theorem}
For $x\in L_p(\M)$ with $1\leq p<\infty$
%or $x\in \mathcal{A}$ in the case $p=\infty$
, we have
\begin{align}\label{mean ergodic theorem t8}
\|\alpha(\sigma_r)x-F(x)\|_p\rightarrow0, \;\mathrm{as}\;r\rightarrow\infty
\end{align}
and
\begin{align}\label{mean ergodic theorem t0}
\|\alpha(\sigma_r)x-x\|_p\rightarrow0, \;\mathrm{as}\;r\rightarrow0
\end{align}
\end{theorem}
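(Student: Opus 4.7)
The plan is to first establish both limits in $L_2(\M)$ via the spectral decomposition (3.1), and then transfer to $L_p(\M)$, $1\le p<\infty$, using the uniform contraction bound $\|\alpha(\sigma_r)-F\|\le 2$ together with density of $S_\M$.

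In $L_2(\M)$, write
$$\alpha(\sigma_r)x-e_{\{0\}}x=\int_{\Sigma_\alpha\setminus\{0\}}\varphi_\zeta(r)\,de_\zeta x,\qquad \alpha(\sigma_r)x-x=\int_{\Sigma_\alpha}\bigl(\varphi_\zeta(r)-1\bigr)\,de_\zeta x,$$
where $e_{\{0\}}$ denotes the spectral projection at the trivial character. From the explicit formulas for $e^\lambda_k$ and $\eta_u$, one verifies the pointwise limits $\varphi_\zeta(r)\to 0$ as $r\to\infty$ for every $\zeta\ne 0$---the Gaussian factor $e^{-|\lambda|r^2/4}$ dominates the Laguerre polynomial, and the estimate $J_{n-1}(x)=O(x^{-1/2})$ for large $x$ handles the Bessel characters---and $\varphi_\zeta(r)\to 1$ as $r\to 0$, using $L^{n-1}_k(0)=\binom{k+n-1}{k}$ together with the expansion $J_{n-1}(x)\sim x^{n-1}/(2^{n-1}(n-1)!)$ near the origin. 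Since $\sigma_r$ is a probability measure we have $|\varphi_\zeta(r)|\le 1$ uniformly in $\zeta$ and $r$, so dominated convergence on the finite positive measure $d\|e_\zeta x\|_2^2$ delivers both $L_2$-convergences.

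Next I would identify $e_{\{0\}}|_{L_2(\M)}$ with $F|_{L_2(\M)}$. The inclusion $L_2(\mathcal{F})\subset\operatorname{Ran}e_{\{0\}}$ is immediate. For the converse, if $e_{\{0\}}x=x$ then $\alpha(\sigma_r)x=x$ for every $r>0$; strict convexity of the $L_2$-norm applied to $x=\int_{S^n_r}\alpha(g)x\,d\sigma_r(g)$, in which every integrand has norm $\|x\|_2$, forces $\alpha(g)x=x$ for $\sigma_r$-almost every $g$, hence for every $g\in S^n_r$ by strong continuity of $\alpha$. Letting $r$ range over $(0,\infty)$ and invoking Lemma \ref{lem:spectral theory} (iii), which asserts that $\bigcup_{r>0}S^n_r$ generates $H^n$, one concludes $\alpha(g)x=x$ for all $g\in H^n$ and hence $x\in L_2(\mathcal{F})$.

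Finally, passage from $L_2$ to general $L_p$: since $T_r:=\alpha(\sigma_r)-F$ is a $2$-contraction on every $L_p(\M)$, the set $\{x\in L_p(\M):\,T_rx\to 0\}$ is closed, so it suffices to test convergence on the dense subspace $S_\M\subset L_p(\M)\cap L_2(\M)\cap L_\infty(\M)$. For $x\in S_\M$ and $p\in[2,\infty)$ the H\"older-type interpolation
$$\|T_rx\|_p\le\|T_rx\|_2^{2/p}\,\|T_rx\|_\infty^{1-2/p}\longrightarrow 0$$
does the job, and for $p\in(1,2)$ the analogous interpolation between $L_1$ and $L_2$ bounds applies; the endpoint $p=1$ is reached by a standard density/truncation argument on $S_\M$ exploiting the semifiniteness of $\tau$. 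The limit $r\to 0$ is treated identically. The main obstacle in this scheme is really the identification $e_{\{0\}}=F|_{L_2(\M)}$: a priori the common fixed subspace of the family $\{\alpha(\sigma_r)\}_{r>0}$ could be strictly larger than the fixed subspace of the entire group action, and the geometric content of Lemma \ref{lem:spectral theory} (iii) is precisely what rules out this pathology; the remaining spectral and interpolation inputs are routine.
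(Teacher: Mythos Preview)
Your argument for \eqref{mean ergodic theorem t8} is essentially the paper's: both reduce to $p=2$ via the spectral decomposition, use dominated convergence on the spherical functions, and identify $e_{\{0\}}$ with $F$ by the strict-convexity argument together with Lemma~\ref{lem:spectral theory}(iii); the passage to general $p$ by contractivity and density is also the same in spirit.

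For \eqref{mean ergodic theorem t0}, however, the paper takes a different route. Rather than returning to the spectral picture and checking $\varphi_\zeta(r)\to 1$, it works directly in $L_p$ for \emph{every} $p\in[1,\infty)$ at once: take an approximate identity $(\phi_\varepsilon)\subset C_c^\infty(H^n)$, reduce by density to $x=\alpha(\phi_{\varepsilon_0})y$, and use the Minkowski-type bound
\[
\|\alpha(\sigma_r)x-x\|_p\le\|\sigma_r*\phi_{\varepsilon_0}-\phi_{\varepsilon_0}\|_1\,\|y\|_p\le C_{\phi_{\varepsilon_0}}\,r\,\|y\|_p.
\]
This avoids both the spectral computation and the interpolation step. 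Your spectral approach is correct and has the virtue of treating the two limits uniformly, but the paper's convolution estimate is more elementary and handles the endpoint $p=1$ cleanly, whereas your ``standard density/truncation'' remark at $p=1$ would still require justification. The most direct patch at $p=1$, if you want to keep your scheme, is simply strong continuity: $\|\alpha(\sigma_r)x-x\|_1\le\int_{S^n_1}\|\alpha(rv)x-x\|_1\,d\sigma_1(v)\to 0$.
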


\begin{proof}
Proof of (\ref{mean ergodic theorem t8}).
It suffices to prove the case $p=2$ since other cases can be obtained by the fact that $F$ and all $\alpha(\sigma_r)$'s are contractive and the standard density arguments. 
%For instance, for $1<p<2$, we have
%\begin{align*}
%\|\alpha(\sigma_r)x-F(x)\|_p&\leq \|\alpha(\sigma_r)x-F(x)\|^{2/p}_2\|\alpha(\sigma_r)x-F(x)\|^{1-2/p}_\infty\\
%&\leq (2\|x\|_\infty)^{{1-2/p}}\|\alpha(\sigma_r)x-F(x)\|^{2/p}_2.
%\end{align*}
By (\ref{spectral decomposition}), for $x\in L_2(\M)$, we have
$$\|\alpha(\sigma_r)x\|^2_2=\int_{\Sigma_\alpha}|\varphi_\zeta(r)|^2\langle de_\zeta x,x\rangle.$$
Following from the asymptotic properties of the Laguerre and Bessel functions, all the nontrivial spherical functions satisfy $|\varphi_\zeta(r)|\leq1$ and $|\varphi_\zeta(r)|\rightarrow0$ as $r\rightarrow\infty$. Therefore if $x$ is an element such that its  spectral measure $\langle de_\zeta x,x\rangle$ assigns 0 to the trivial character then it follows by dominated convergence theorem that $\|\alpha(\sigma_r)x\|_2\rightarrow 0$ as $r\rightarrow\infty$.

Now we claim that for any $x\in L_2(\M)$, the spectral measure of $x-F(x)$ assigns 0 to the trivial character whence (\ref{mean ergodic theorem t8}) follows. This is deduced from the fact that the space of vectors invariant under each $\alpha(\sigma_r)$ coincides with $L_2(\mathcal{F})$, the space of $H^n$ invariant vectors. To see this assume $y$ is a unit vector such that $\alpha(\sigma_r)y=y$ for all $r>0$. The strict convexity of $H^n$ implies that $\alpha(g)y=y$ for almost all points on the support of $\sigma_r$. By strong continuity of $g\rightarrow\alpha_gy$, we have $\alpha(g)y=y$ for every point in the support of $\sigma_r$, which is the sphere $S^n_{r}$ in $H^n$. By Lemma \ref{lem:spectral theory} (iii), the group generated by $S^n_r$ equals $H^n$. It follows that $\alpha(g)y=y$ for all $g\in H^n$.

Proof of (\ref{mean ergodic theorem t0}).  Let $(\phi_{\varepsilon})_{\varepsilon>0}\subset{C_c^{\infty}(H^n)}$ be an approximation identity. By the strong continuity of the action $\alpha$, it is easy to verify that $\alpha(\phi_{\varepsilon})x\rightarrow x$ in $L_p$-norm as $\varepsilon\rightarrow 0$ for all $1\leq p<\infty$. Hence by density, it suffices to prove (\ref{mean ergodic theorem t0}) for $x=\alpha(\phi_{\varepsilon_0})y$ for some $y\in L_p(\M)$ and sufficiently small $\varepsilon_0$. By Minkowski inequality,
$$\|\alpha(\sigma_r)\alpha({\phi_{\varepsilon_{0}}})y-\alpha({\phi_{\varepsilon_{0}}})y\|_p\leq \|\sigma_r\ast\phi_{\varepsilon_0}-\phi_{\varepsilon_0}\|_1\|y\|_p,$$
and the desired result follows from the easy estimate
\begin{align*}
\|\sigma_r\ast\phi_{\varepsilon_0}-\phi_{\varepsilon_0}\|_1&=\|\int_{S^n_1}(\phi_{\varepsilon_0}(rh^{-1}g)-\phi_{\varepsilon_0}(g))d\sigma_1(h)\|_1\\
&\leq \int_{S^n_1}\|\phi_{\varepsilon_0}(rh^{-1}g)-\phi_{\varepsilon_0}(g)\|_1d\sigma_1(h)\leq C_{\phi_{\varepsilon_0}}r.
\end{align*}

\end{proof}

\section{Maximal ergodic theorems I}
This section is devoted to the proof of Theorem \ref{thm:maximal radial average}. Let $(\M, \tau, H^n,\alpha)$ be a $W$*-dynamical system. The main idea is to firstly embedd $\alpha(\sigma_r)x$ for $x$ in some nice dense subspace of $L_1(\mathcal M)\cap\mathcal M$ into an analytic family of operator-valued functions $M^a\alpha(\sigma_r)x$ ($a\in\mathbb C$) with $M^0\alpha(\sigma_r)x=\alpha(\sigma_r)x$, and then use Stein's analytic interpolation theorem.

To find the nice dense subspace, we need to introduce a space of $C^{\infty}$ vector associated to the action $\alpha$ of $H^n$, which is norm dense in the noncommutative $L_p$ space.
For the $W^*$-dynamical system $(\M,\tau,H^n,\alpha)$ with $\M$  and $H^n$ separable, by Theorem 5.5 of \cite{Ped79}, there exist a $C^*$-dynamical system $(\mathcal{A},\tau,H^n,\alpha)$ such that $\M=\mathcal{A}''$. The nice dense subspace is defined as
$$\mathcal{D}=\left\{\int_{H^n}\phi(g)\alpha_gxdg:\;\phi\in C^{\infty}_c(H^n),\;x\in\mathcal{A}\right\}.$$
The space $\mathcal{D}$ is contained in the $C^{\infty}$-class $\mathcal{A}^{\infty}$ in the terminology from noncommutative geometry  (see e.g. \cite{Con80}), since for any  $x=\int_{H^n}\phi(g)\alpha_gydg$ with some $y\in\mathcal{A}$ and $\phi\in C^{\infty}_c(H^n)$, the function
$$h\rightarrow \alpha_hx=\int_{H^n}\phi(h^{-1}g)\alpha_{g}ydg$$
is a $C^{\infty}$ map from $H^n$ to the norm space $\mathcal A$.  Moreover, for such a fixed $x\in\mathcal{D}$, the map given by
$$r\rightarrow\alpha(\sigma_r)x=\int_{H^n}(\phi\ast\sigma_r)\alpha_gy$$
is a $C^{\infty}$ map from $\mathbb{R}_+$ to the norm space $\mathcal A$, namely $(1/\delta)(\alpha(\sigma_{r+\delta})x-\alpha(\sigma_r)x)$ converges in $\|\cdot\|_\infty$ to an operator denoted by $(d/dr)\alpha(\sigma_r)x$ and similarly $(d^m/dr^m)\alpha(\sigma_r)x$ with $m\geq1$ exist as an operator in $\mathcal A$ .

Now we embedd $\alpha(\sigma_r)x$ for $x\in\mathcal{D}$ into an analytic family of operators using the Riemann-Liouville fractional integral operators. More precisely, for $x\in\mathcal{D}$, we consider the smooth operator-valued function on $\mathbb{R}_+$: $F_x(r)=\alpha(\sigma_r)x$. For $a\in\mathbb{C}$ with $Re(a)>0$, the Riemann-Liouville fractional integral of $F_x(r)$ are defined by
$$I^{a}F_x(r)=\frac{1}{\Gamma(a)}\int^r_0(r-s)^{a-1}F_x(s)ds.$$
The normalized fractional integrals are defined by
$$M^{a}F_x(r)=r^{-a}I^{a}F_x(r).$$
These operator-valued fractional integrals have an analytic continuation to $a\in\mathbb{C}$ (see e.g. the lemma in Page 77 of \cite{Ste70}). The family of operators $I^{a}$ satisfy the functional equation $I^{a}I^{b}F_{x}=I^{a+b}F_x$ and $I^0F_x=F_x$. Clearly, by definition $I^1F_x(r)=\int^r_0F_x(s)ds$. It follows that
$$I^{-1}F_x(r)=\frac{d}{dr}F_x(r)$$
and more generally for $k\in\mathbb{N}$
$$M^{-k}F_x(r)=r^k\frac{d^k}{dr^k}F_x(r.)$$

After these preparations, Theorem \ref{thm:maximal radial average} follows from the following two propositions.

\begin{proposition}\label{pro:maximal uniform average map}
Let $1<p\leq\infty$ and $n\geq1$. For $a\in\mathbb{C}$ with $Re(a)\geq1$, we have
\begin{align}\label{maximal uniform average map}
\|{\sup_{r>0}}^+M^{a}F_x(r)\|_p\leq C_{p}e^{\pi|Im(a)|}\|x\|_p,\;\forall x\in L_p(\M).
\end{align}
\end{proposition}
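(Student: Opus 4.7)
The plan is to exploit the rotational symmetry of the Heisenberg spheres: the sphere $S^n_r$ is foliated by the one-parameter subgroups $\{(sy,0):s\in\mathbb R\}$ indexed by $y\in S^{2n-1}$. This foliation reduces the operator-valued analytic maximal inequality for $M^a F_x(r)$ to the corresponding inequality for a single one-parameter semigroup, which is then accessible via the Junge--Xu noncommutative Dunford--Schwartz theorem together with a Stein-type analytic interpolation.

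Concretely, I would first make the change of variables $s=rt$ to write
\begin{equation*}
M^a F_x(r)=\frac{1}{\Gamma(a)}\int_0^1 (1-t)^{a-1}\alpha(\sigma_{rt})x\,dt.
\end{equation*}
Next, using the parametrisation $\sigma_s=(y\mapsto(sy,0))_*\sigma$ with $\sigma$ the normalised measure on $S^{2n-1}$, I would decompose $\alpha(\sigma_s)x=\int_{S^{2n-1}}T^s_y x\,d\sigma(y)$ where $T^s_y:=\alpha_{(sy,0)}$. A direct computation using the Heisenberg law gives $(sy,0)(s'y,0)=((s+s')y,0)$ since $\mathrm{Im}(sy\cdot\overline{s'y})=ss'\mathrm{Im}(|y|^2)=0$, so for each fixed unit $y$ the family $(T^s_y)_{s\in\mathbb R}$ is a one-parameter group of trace-preserving $*$-automorphisms of $\M$. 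A Fubini argument (valid on the dense subspace $\mathcal D$, where all integrands are smooth in operator norm) then yields
\begin{equation*}
M^a F_x(r)=\int_{S^{2n-1}}M^a_y x(r)\,d\sigma(y),\qquad M^a_y x(r):=\frac{r^{-a}}{\Gamma(a)}\int_0^r(r-s)^{a-1}T^s_y x\,ds.
\end{equation*}
I would then establish the uniform analytic maximal inequality for each semigroup,
\begin{equation*}
\bigl\|{\sup_{r>0}}^+ M^a_y x(r)\bigr\|_p\le C_p e^{\pi|\mathrm{Im}(a)|}\|x\|_p,\qquad \mathrm{Re}(a)\ge 1,\;1<p\le\infty,
\end{equation*}
with $C_p$ depending only on $p$: the endpoint $a=1$ is the noncommutative Dunford--Schwartz inequality of Junge--Xu (cf.\ Lemma \ref{lem:p estimate for Mt}), and the extension to $\mathrm{Re}(a)\ge 1$ is obtained by a noncommutative adaptation of Stein's analytic interpolation, with the exponential factor emerging from Stirling asymptotics of $1/\Gamma(a)$ in the vertical strip. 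Finally, the integral Minkowski inequality for the Banach space $L_p(\M;\ell_\infty((0,\infty)))$ gives
\begin{equation*}
\bigl\|{\sup_{r>0}}^+ M^a F_x(r)\bigr\|_p\le\int_{S^{2n-1}}\bigl\|{\sup_{r>0}}^+ M^a_y x(r)\bigr\|_p d\sigma(y)\le C_p e^{\pi|\mathrm{Im}(a)|}\|x\|_p.
\end{equation*}

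The main obstacle is carrying out the Stein-type analytic interpolation in the noncommutative maximal setting with the claimed constants. Since $L_p(\M;\ell_\infty)$ is not itself a standard interpolation endpoint, one typically proves the estimate first in the column and row spaces $L_p(\M;\ell_\infty^c)$ and $L_p(\M;\ell_\infty^r)$ and then recombines via Lemma \ref{lem:l8c+l8r}; tracking the $e^{\pi|\mathrm{Im}(a)|}$ growth and keeping $C_p$ independent of the semigroup $(T^s_y)_s$ simultaneously requires careful bookkeeping. A secondary, more routine issue is justifying the operator-valued Fubini in step two, which is resolved by working on the smooth subspace $\mathcal D$ (where everything converges in $\|\cdot\|_\infty$) and then extending by density together with the norm continuity of $r\mapsto\alpha(\sigma_r)x$.
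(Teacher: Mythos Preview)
Your foliation of $S^n_r$ by one-parameter subgroups $s\mapsto(sy,0)$ and the subsequent appeal to the Junge--Xu noncommutative Dunford--Schwartz theorem for each fixed $y$ is exactly what the paper does; this handles the case $a=1$ (the paper's Theorem \ref{thm:maximal uniform average ur}) completely. The difference lies in how one passes from $a=1$ to $\mathrm{Re}(a)\geq 1$, and here your plan is unnecessarily complicated and somewhat vague.

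You identify the analytic extension as the ``main obstacle'' and propose a Stein-type interpolation in $L_p(\M;\ell^c_\infty)$ and $L_p(\M;\ell^r_\infty)$ followed by Lemma \ref{lem:l8c+l8r}. But Stein interpolation needs two endpoints, and you only have one ($\mathrm{Re}(a)=1$); it is not clear what the second line would be. In fact no interpolation is needed at all. The paper's argument is an elementary domination by duality: for $x\in L_p^+(\M)$ and $y_r\in L_{p'}^+(\M)$ one has $\tau(F_x(s)y_r)\geq 0$, and since $\mathrm{Re}(a)\geq 1$ implies $(r-s)^{\mathrm{Re}(a)-1}\leq r^{\mathrm{Re}(a)-1}$ on $[0,r]$, the Stirling bound $|\Gamma(\mathrm{Re}(a))/\Gamma(a)|\leq C e^{\pi|\mathrm{Im}(a)|}$ gives
\[
|\tau(M^aF_x(r)y_r)|\leq C e^{\pi|\mathrm{Im}(a)|}\,\tau\bigl(M^1F_x(r)\,y_r\bigr)=C e^{\pi|\mathrm{Im}(a)|}\,\tau\bigl(\alpha(\mu_r)x\cdot y_r\bigr).
\]
Taking the supremum over $(y_r)$ in the unit ball of $L_{p'}(\ell_1)$ (which by \cite{JuXu06} is spanned by positive elements) reduces the full proposition to the $a=1$ case directly. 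The exponential factor thus emerges from the Gamma asymptotics alone, with no bookkeeping through column/row spaces. The same domination trick would of course also work in your order of operations (foliate first, then reduce $a$ to $1$ for each semigroup), so your overall architecture is fine---it is just that the step you flagged as hard is actually the easy one.
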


\begin{proposition}\label{pro:maximal uniform average ma2}
Let $n>1$ be the dimension of the Heisenberg group and $a\in\mathbb{C}$. The following inequality holds
\begin{align}\label{maximal uniform average ma2}
\|{\sup_{r>0}}^+M^{a}F_x(r)\|_2\leq Ce^{\pi|Im(a)|}\|x\|_2,\;\forall x\in\mathcal{D}
\end{align}
provided $-n+1+1/2<Re(a)<-n+2$.
\end{proposition}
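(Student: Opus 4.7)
My plan is to reduce the $L_2$-maximal bound to scalar spectral estimates on the Riemann-Liouville multipliers. Writing $g(r) := M^a F_x(r)$, the spectral decomposition \eqref{spectral decomposition} and the linearity of the Riemann-Liouville calculus give
$$g(r) = \int_{\Sigma_\alpha} M^a_\zeta(r)\,de_\zeta\,x, \qquad M^a_\zeta(r) := \frac{r^{-a}}{\Gamma(a)}\int_0^r (r-s)^{a-1}\varphi_\zeta(s)\,ds$$
(the integral understood via analytic continuation in $a$), so that by Plancherel $\|g(r)\|_2^2 = \int_{\Sigma_\alpha} |M^a_\zeta(r)|^2\,d\mu_x(\zeta)$ with $\mu_x$ the spectral measure of $x$. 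The analogous identity for $g'(r)$ follows after verifying the Riemann-Liouville relation $s\partial_s g(s) = -a\,g(s) + M^{a-1}F_x(s)$.

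The next step is a Sobolev-type inequality in $L_1(\mathcal M)$. Setting $G(r) = g(r)^{*}g(r)$, the fundamental theorem gives $G(r) = G(r_0) + \int_{r_0}^r (g'^{*}g + g^{*}g')\,ds$; in positive operator order ${\sup_r}^+ G(r) \le G(r_0) + \int_0^\infty |g'^{*}g + g^{*}g'|\,ds$, and after taking the trace norm, using $\|g'^{*}g\|_1 \le \|g'\|_2\|g\|_2$, and Cauchy--Schwarz,
$$\big\|{\sup_{r>0}}^+ |g(r)|^2\big\|_1 \le \|g(r_0)\|_2^2 + 2 \left(\int_0^\infty \|g(s)\|_2^2\, w(s)\,ds\right)^{1/2}\!\left(\int_0^\infty \|g'(s)\|_2^2\,\frac{ds}{w(s)}\right)^{1/2}$$
for any positive weight $w$. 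The conjugate computation with $g(r)g(r)^{*}$ yields the row maximal bound, and Lemma \ref{lem:l8c+l8r} interpolates the two to give the desired $L_2(\mathcal M;\ell_\infty)$ estimate. Using the spectral identifications of $\|g\|_2^2$ and $\|g'\|_2^2$, the whole problem is thus reduced to the scalar estimate
$$\sup_{\zeta\in\Sigma_\alpha}\left(\int_0^\infty |M^a_\zeta(s)|^2\, w(s)\,ds + \int_0^\infty \frac{|M^a_\zeta(s)|^2 + |M^{a-1}_\zeta(s)|^2}{s^2\, w(s)}\,ds\right) \lesssim e^{2\pi|Im(a)|}.$$

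The core analytic work is to verify this estimate on the Heisenberg spherical spectrum. For the Bessel part $\varphi_\zeta = \eta_u$, the scaling identity $M^a\eta_u(r) = M^a\eta_1(ur)$ eliminates the dependence on $u$, and the asymptotics $J_{n-1}(x) = O(x^{-1/2})$ yield $|M^a\eta_1(r)| = O(r^{-n+1/2})$ for large $r$, which is square-integrable precisely when $n>1$. For the Laguerre part $\varphi_\zeta = e^\lambda_k$ one invokes classical pointwise bounds on $L^{n-1}_k$ to get the analogous decay. These combine to pin down $-n+3/2 < Re(a) < -n+2$ as the optimal range for simultaneous convergence of the two scalar integrals, and the $e^{\pi|Im(a)|}$ factor arises from standard Gamma-function estimates on vertical lines. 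The main obstacle will be the behaviour near $r = 0$: every multiplier satisfies $M^a_\zeta(r) \to 1/\Gamma(a+1)$ as $r\to 0$, so no power weight $w(s) = s^\beta$ alone makes both integrals converge at the origin. I would resolve this by splitting $x = F(x) + (x - F(x))$ via Theorem \ref{thm:mean ergodic theorem}---the invariant part contributes only the constant $F(x)/\Gamma(a+1)$ in $g$, trivially handled---and for the remainder subtracting the common constant $x/\Gamma(a+1)$ from $g$ on $(0,1]$. The Taylor expansion $M^a_\zeta(r) - 1/\Gamma(a+1) = O(r^2)$ then makes the modified function vanish at $r = 0$ at an integrable rate. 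Combined with the Sobolev bound on $[1,\infty)$ anchored at $r_0 = 1$, whose boundary contribution $\|g(1)\|_2$ is controlled by the pointwise estimate $\sup_\zeta |M^a_\zeta(1)| \lesssim e^{\pi|Im(a)|}$ borrowed from Nevo--Thangavelu, this closes the argument.
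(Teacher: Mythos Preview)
Your Sobolev-plus-spectrum strategy is a natural first attempt, but there is a genuine obstruction that your sketch does not overcome, and this is precisely why the paper takes a different route.

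The problem is the Bessel spectrum.  After Cauchy--Schwarz you need \emph{both}
\[
A(u):=\int_1^\infty |M^a_\zeta(s)|^2\,w(s)\,ds\quad\text{and}\quad
B(u):=\int_1^\infty |\partial_s M^a_\zeta(s)|^2\,\frac{ds}{w(s)}
\]
to be bounded uniformly in the Bessel parameter $u\in(0,\infty)$, with a \emph{single} weight $w$ (the spectral integration over $\mu_x$ forces one common $w$).  Your scaling identity $M^a_\zeta(s)=m_a(us)$ shows $m_a(us)\to m_a(0)=1/\Gamma(a+1)\neq 0$ as $u\to 0$; hence by Fatou $\sup_u A(u)<\infty$ requires $\int_1^\infty w<\infty$.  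But then Cauchy--Schwarz gives $\big(\int_1^\infty s\,ds\big)^2\le \int_1^\infty w\cdot\int_1^\infty s^2/w$, so $\int_1^\infty s^2/w=\infty$, and since $\partial_s m_a(us)=u\,m_a'(us)$ behaves like $c\,u^2 s$ for small $u$, a direct change of variable shows $B(u)\sim u^{1+\beta}\int_0^\infty|m_a'|^2t^{-\beta}\,dt\to\infty$ as $u\to 0$ whenever $w(s)=s^\beta$ with $\beta<-1$; the general-$w$ argument just given shows no weight avoids this.  Your proposed fix of subtracting the constant $x/\Gamma(a+1)$ does cure the origin but transfers the difficulty to $r\to\infty$: the modified multiplier $m_a(t)-m_a(0)$ no longer decays, so $\int|m_a(t)-m_a(0)|^2t^{-1}\,dt$ diverges logarithmically at infinity.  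In short, the same non-vanishing constant that creates the $r=0$ obstacle prevents a purely spectral Sobolev bound over the full dilation range of $u$.

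The paper circumvents this by \emph{not} relying solely on spectral estimates.  It first proves the $L_2$ maximal bound at the integer endpoint $a=-n+2$ (Lemma~\ref{lem:maximal uniform average maz}) by comparing $M^{-n+2}F_x(r)$ with the uniform average $\alpha(\mu_r)x$ via integration by parts; the maximal inequality for $\alpha(\mu_r)$ comes from the noncommutative Dunford--Schwartz theorem (Theorem~\ref{thm:maximal uniform average ur}), and the remainder is dominated by the $g$-function $g_m(x)$ of Lemma~\ref{lem:g function}, whose spectral integral $\int_0^\infty r^{2m-1}|\partial_r^m\varphi_\zeta(r)|^2\,dr$ \emph{is} dilation-invariant and hence uniformly bounded in $\zeta$.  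The passage to general $a$ with $-n+3/2<Re(a)<-n+2$ then uses the semigroup identity $I^a=I^{-\delta+i\gamma}I^{-n+2}$ and another integration by parts to reduce to the already-established maximal bound for $M^{-n+2}$ plus another $g_{n-1}$ term.  The moral is that the ergodic-theoretic input (Dunford--Schwartz for $\alpha(\mu_r)$) is essential; a pure $L_2$ spectral/Sobolev argument cannot close because of the scaling incompatibility above.
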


The two propositions will be shown in the following two subsections. Now we can finish the proof of Theorem \ref{thm:maximal radial average} using Stein's analytic interpolation theorem. Here we present a more general result which implies Theorem \ref{thm:maximal radial average} when $a=0$.

\begin{theorem}\label{thm:maximal radial average mpc}
Let $n>1$ and  $a\in\mathbb{C}$ such that $-n+1+1/2<Re(a)<1$. Then for any $(2n-1)/(2n-2+Re(a))<p\leq\infty$, we have
\begin{align}\label{maximal radial average mpc}
\|{\sup_{r>0}}^+M^{a}F_x(r)\|_p\leq C_{p,a}\|x\|_p,\;\forall x\in L_p(\M).
\end{align}
\end{theorem}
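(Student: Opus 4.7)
The plan is to deduce the theorem from Propositions \ref{pro:maximal uniform average map} and \ref{pro:maximal uniform average ma2} by Stein's analytic interpolation theorem, in its form adapted to the noncommutative vector-valued maximal norm $L_p(\M;\ell_\infty)$. Fix $a$ with $\sigma := \mathrm{Re}(a) \in (-n+3/2,\,1)$ and $p > (2n-1)/(2n-2+\sigma)$. I would pick $\sigma_0 \in (-n+3/2,\,\min(\sigma,-n+2))$ close to $-n+3/2$ and $q_0 \in (1,\infty]$ close to $1$ so that the exponents
$$\sigma = (1-\theta)\sigma_0 + \theta, \qquad \frac{1}{p} = \frac{1-\theta}{2} + \frac{\theta}{q_0}$$
admit a solution with $\theta \in (0,1)$. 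A direct algebraic check shows that the infimum of the resulting $p$'s as $\sigma_0 \downarrow -n+3/2$ and $q_0 \downarrow 1$ is exactly $(2n-1)/(2n-2+\sigma)$, matching the scale in the statement.

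For $x \in \mathcal{D}$ (the dense subspace of smooth vectors introduced in Section 5), I would consider the analytic family of operator-valued maps on the strip $\{z:\sigma_0 \le \mathrm{Re}(z) \le 1\}$,
$$T_z x := \bigl(e^{\lambda (z - a)^2}\, M^z F_x(r)\bigr)_{r>0},$$
with $\lambda>0$ sufficiently large. The Gaussian prefactor has modulus $e^{\lambda(\mathrm{Re}(z)-\sigma)^2 - \lambda\mathrm{Im}(z)^2}$, and thus decays in $\mathrm{Im}(z)$ on every vertical line; it absorbs the $e^{\pi|\mathrm{Im}(z)|}$ growth in the endpoint estimates and makes the family admissible for Stein interpolation independently of the strip width, while at $z=a$ it equals $1$, so no constant is lost on the target line. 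Propositions \ref{pro:maximal uniform average ma2} and \ref{pro:maximal uniform average map} then provide uniform bounds $\|T_z x\|_{L_2(\M;\ell_\infty)} \le C\|x\|_2$ for $\mathrm{Re}(z) = \sigma_0$ and $\|T_z x\|_{L_{q_0}(\M;\ell_\infty)} \le C_{q_0}\|x\|_{q_0}$ for $\mathrm{Re}(z) = 1$.

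Stein's analytic interpolation then yields $\|T_a x\|_{L_p(\M;\ell_\infty)} \le C_{p,a}\|x\|_p$, which is precisely the claimed inequality at $z=a$. Extending by density from $\mathcal{D}$ to all of $L_p(\M)$ completes the argument.

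The principal technical obstacle is executing Stein's analytic interpolation against the noncommutative maximal norm $L_p(\M;\ell_\infty)$, which does not enjoy the same clean interpolation theory as an ordinary $L_p$ space. The proper approach is to decompose via Lemma \ref{lem:l8c+l8r} into the column and row amalgamated spaces $L_p(\M;\ell^c_\infty)$ and $L_p(\M;\ell^r_\infty)$, which are amenable to complex interpolation through the framework of \cite{JuPa10}, apply Stein's theorem in each, and recombine; when $p<2$ one additionally passes through the duality $L_p(\M;\ell_\infty)^* = L_{p'}(\M;\ell_1)$ to transfer the argument to the $\ell_1$ side. Verifying the operator-valued analyticity and the admissible growth of $z\mapsto T_z x$ on $\mathcal{D}$ is routine but essential, and follows from the smoothness of $r\mapsto F_x(r)$ in the norm topology of $\mathcal{A}$ together with the explicit Riemann-Liouville formula for $M^z F_x$.
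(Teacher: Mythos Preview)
Your proposal is essentially correct and follows the same strategy as the paper: Stein's analytic interpolation between Propositions \ref{pro:maximal uniform average map} and \ref{pro:maximal uniform average ma2}, with a Gaussian damping factor to absorb the exponential growth in $|\mathrm{Im}(z)|$. Your parameter choices ($\sigma_0$ close to $-n+3/2$, right endpoint at $1$, $q_0$ close to $1$) coincide with the paper's after a linear change of variable on the strip.

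The one point worth correcting is the route you propose in the last paragraph. The paper does \emph{not} go through Lemma \ref{lem:l8c+l8r}; that lemma is stated only for $p\ge 2$ and plays no role here. Instead, the paper first reduces to $p\le 2$ (the case $p>2$ following by ordinary interpolation between the $L_2$ bound just obtained and the trivial $L_\infty$ bound), and then runs the three-lines argument on the \emph{scalar} pairing: pick $y=(y_r)$ in the unit ball of $L_{p'}(\M;\ell_1)$, use the complex interpolation of the $L_{p'}(\ell_1)$ scale (Proposition 2.5 of \cite{JuXu06}) to produce an analytic family $g(z)$ of dual elements, set
\[
G(z)=\exp\bigl(\delta(z^2-\theta^2)\bigr)\sum_r \tau\bigl(M^{(1-z)b+zc+i\,\mathrm{Im}(a)}F_{f(z)}(r)\,g_r(z)\bigr)
\]
with $f(z)=u|x|^{p((1-z)/2+z/q)}$, and apply the maximum principle to this bounded analytic scalar function. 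So your instinct that one should ``pass through the duality $L_p(\M;\ell_\infty)^*\supset L_{p'}(\M;\ell_1)$'' is exactly the right move and is in fact the whole story; the column/row decomposition is a detour you do not need here.
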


\begin{proof}
It suffices to prove this result in the case $p\leq2$, since the results in the case $p>2$ can be obtained by complex interpolation from the result in the case $p=2$ and the trivial result in the case $p=\infty$. For $\theta\in[0,1]$ and $q\in[1,2]$, define the index $p$ as a function $p(\theta,q)$ on $[0,1]\times[1,2]$ satisfying the following relationship
$$\frac{1}{p(\theta,q)}=\frac{1-\theta}{2}+\frac{\theta}{q}.$$
It is easy to check that for fixed $\theta\in(0,1)$, $p(\theta,q)$ is a strictly increasing function associated to $q$ and  $p(\theta,(1,2])=(p(\theta,1),2]$.
For fixed $a\in\mathbb{C}$ with $-n+1+1/2<Re(a)<1$, we define $\theta_a$ such that
$$Re(a)=(1-\theta_a)(-n+1+1/2)+\theta_a$$
and $p_a=p(\theta_a,1)$.
It is easy to check that $p_a=(2n-1)/(2n-2+Re(a))$.

Let $p$ such that $p_a<p\leq2$. Then we can find $\theta$ smaller than but close to $\theta_a$, $q\in(1,2]$,  $b\in\mathbb{R}$ with $-n+1+1/2<b<\min(-n+2,Re(a))$, and $c\in\mathbb{R}$ with $c\geq1$ such that
$$\frac{1}{p}=\frac{1-\theta}{2}+\frac{\theta}{q},\;Re(a)=(1-\theta)\cdot b+\theta\cdot c.$$

Let $x\in L_p(\M)$ and $y=(y_r)_{r>0}$ with $\|x\|_p<1$ and $\|(y_r)_r\|_{L_{p'}(\ell_1)}<1$. Define
$$f(z)=u|x|^{\frac{p(1-z)}{2}+\frac{pz}{q}},\;\forall z\in\mathbb{C}$$
where $x=u|x|$ is the polar decomposition of $x$. On the other hand, by the complex interpolation of the space
$L_{p'}(\ell_1)$ (see e.g. Proposition 2.5 of \cite{JuXu06}). there is a function $g=(g_{r})_{r>0}$ continuous on the strip $\{z:\;0\leq Re(z)\leq1\}$ and analytic in the interior such that $g(\theta)=y$ and
$$\sup_{t\in\mathbb{R}}\max\{\|g(it)\|_{L_2(\ell_1)},\|g(1+it)\|_{L_{q'}(\ell_1)}\}<1.$$
Now define
$$G(z)=\exp(\delta(z^2-\theta^2))\sum_{r}\tau(M^{(1-z)b+zc+iIm(a)}F_{f(z)}(r)g_r(z))$$
where $\delta>0$ is a constant to be specified. $G$ is a function analytic in the open strip. Applying Proposition \ref{pro:maximal uniform average ma2},
\begin{align*}
|G(it)|&\leq\exp(\delta(-t^2-\theta^2))\|(M^{b+i(tc-tb+Im(a))}F_{f(it)}(r))_r\|_{L_2(\ell_{\infty})}\|g(it)\|_{L_2(\ell_1)}\\
&\leq C_{a}\exp(\delta(-t^2-\theta^2))\exp(\pi|tc-tb+Im(a))\|f(it)\|_{2}\\
&\leq C_{a}\exp(\delta(-t^2-\theta^2)+\pi|tc-tb+Im(a)|).
\end{align*}
Similarly, by Proposition \ref{pro:maximal uniform average map},
we deduce that
\begin{align*}
|G(1+it)|&\leq\exp(\delta(-(1+it)^2-\theta^2))\|(M^{c+i(tc-tb+Im(a))}F_{f(1+it)}(r))_r\|_{L_q(\ell_{\infty})}\\
&\;\;\;\;\;\;\;\;\;\;\;\times \|g(1+it)\|_{L_{q'}(\ell_1)}\\
&\leq C_{a,q}\exp(\delta(1-t^2-\theta^2))\exp(\pi|tc-tb+Im(a)|)\|f(1+it)\|_{q}\\
&\leq C_{a,q}\exp(\delta(1-t^2-\theta^2)+\pi|tc-tb+Im(a)|).
\end{align*}
Taking $\delta$ big enough, we have
$$\sup_{t\in\mathbb{R}}\max\{|G(it)|,|G(1+it)|\}<C_{a,p}.$$
Consequently, by the maximum principle, $|G(\theta)|<C_{a,p}$, that is
$$\Big|\sum_r\tau(M^aF_x(r)y_r)\Big|<C_{a,p},$$
which implies the desired result.
\end{proof}

\subsection{Proof of Proposition \ref{pro:maximal uniform average map}}
Inequality \eqref{maximal uniform average map} follows from the particular case $a=1$---Theorem \ref{thm:maximal uniform average ur} below.

\begin{proof}
Without loss of generality, we can assume $x\in L^+_p(\M)$. Let $(y_r)_{r>0}\subset L^+_{p'}(\M)$ where $p'$ is the conjugate index of $p$. Use the standard estimates of the $\Gamma$-function (see e.g. Page 79 of \cite{Ste70}),
$$\Big|\frac{\Gamma(Re(a))}{\Gamma(a)}\Big|\leq C_a e^{\pi|Im(a)|},$$
by Theorem \ref{thm:maximal uniform average ur}, we have
\begin{align*}
|\tau(M^{a}F_x(r)y_r)|&=\Big|\frac{1}{r^a}\frac{1}{\Gamma(a)}\int^r_0(r-s)^{a-1}\tau(F_x(s)y_r)ds\Big|\\
&\leq\frac{1}{r^{Re(a)}}\frac{C_ae^{\pi|Im(a)|}}{{\Gamma(Re(a))}}\int^r_0(r-s)^{Re(a)-1}\tau(F_x(s)y_r)ds\\
&\leq{C_ae^{\pi|Im(a)|}}\frac{1}{r}\int^r_0\tau(F_x(s)y_r)ds={C_ae^{\pi|Im(a)|}}\tau(\alpha(u_r)x\cdot y_r).
\end{align*}
Therefore,
$$\sum_{r>0}|\tau(M^{a}F_x(r)y_r)|\leq{C_ae^{\pi|Im(a)|}}\sum_{r>0}\tau(\alpha(u_r)x\cdot y_r).$$
Taking supremum over all $(y_r)_r\subset L^+_{p'}(\M)$  such that $\|\sum_ry_r\|_{p'}\leq1$, and using the fact that each element in the unit ball of $L_p(\M;\ell_1)$ is a sum of eight positive elements in the same ball (see Proposition 2.1 in \cite{JuXu06}) and Theorem \ref{maximal uniform average ur}, we deduce the assertion.
\end{proof}

\begin{theorem}\label{thm:maximal uniform average ur}
Let $\mu_r=(1/r)\int^r_0\sigma_sds$ denote the uniform averages of the spherical means on $H^n$ with $n\geq1$. Then for $1<p\leq\infty$, we have
\begin{align}\label{maximal uniform average ur}
\|{\sup_{r>0}}^{+}\alpha(\mu_r)x\|_p\leq C_{p,n}\|x\|_p,\;\forall x\in L_p(\M).
\end{align}
\end{theorem}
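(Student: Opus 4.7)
The approach is to reduce the maximal inequality for the uniform averages $\alpha(\mu_r)$ to the noncommutative continuous-time Dunford-Schwartz maximal ergodic theorem of Junge and Xu by slicing $\mu_r$ along one-parameter subgroups of $H^n$. First, writing $z = s\omega$ in polar coordinates on $\mathbb C^n$, with $\omega$ in the unit sphere $S^{2n-1}$ equipped with its normalized rotation-invariant probability measure $d\omega$, the spherical measure decomposes as $\alpha(\sigma_s)x = \int_{S^{2n-1}}\alpha(s\omega,0)x\,d\omega$, so Fubini yields
\begin{equation*}
\alpha(\mu_r)x = \int_{S^{2n-1}} M^\omega_r x\, d\omega, \qquad M^\omega_r x := \frac{1}{r}\int_0^r \alpha(s\omega,0)x\, ds.
\end{equation*}

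Second, for each fixed $\omega\in S^{2n-1}$, the set $\{(s\omega,0):s\in\mathbb R\}$ is an abelian one-parameter subgroup of $H^n$: the Heisenberg cocycle vanishes on parallel vectors, since $\mathrm{Im}(s_1\omega\cdot\overline{s_2\omega}) = s_1s_2\,\mathrm{Im}(|\omega|^2) = 0$. Restricting $\alpha$ to this subgroup therefore gives a strongly continuous one-parameter group $(T^\omega_s)_{s\in\mathbb R}$ of trace-preserving $*$-automorphisms of $\M$, and $M^\omega_r$ is its continuous-time Birkhoff average. The noncommutative Dunford-Schwartz theorem (Lemma \ref{lem:p estimate for Mt} below) then gives, for $1 < p \leq \infty$,
\begin{equation*}
\bigl\|{\sup_{r>0}}^+ M^\omega_r x\bigr\|_p \leq c_p \|x\|_p,
\end{equation*}
with a constant $c_p$ depending only on $p$; in particular, $c_p$ is independent of $\omega$ since the hypotheses of the theorem (trace-preserving group of automorphisms) are satisfied uniformly. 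The $p = \infty$ endpoint is immediate because $\mu_r$ is a probability measure and $\alpha$ acts by contractions.

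Third, I would integrate in $\omega$ via the generalized Minkowski inequality for the Banach space $L_p(\M;\ell_\infty)$, applied to the vector-valued map $\omega \mapsto (M^\omega_r x)_{r > 0}$, obtaining
\begin{equation*}
\bigl\|{\sup_{r>0}}^+ \alpha(\mu_r) x\bigr\|_p \leq \int_{S^{2n-1}} \bigl\|{\sup_{r>0}}^+ M^\omega_r x\bigr\|_p\, d\omega \leq c_p \|x\|_p.
\end{equation*}

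The main obstacle is purely technical and concerns the continuous parameter $r \in (0,\infty)$: the space $L_p(\M;\ell_\infty((0,\infty)))$ is defined in the preliminaries via a supremum over restrictions to finite subsets $J \subset (0,\infty)$, so strictly speaking the Minkowski step above has to be carried out first for each finite $J$, where it is routine (finite convex combination), and then the supremum over $J$ is taken at the end using the characterization recorded after Lemma \ref{lem:l8c+l8r}. A minor additional point is the passage from discrete to continuous time in Junge-Xu: if only the discrete version is invoked, one applies it to the subsemigroup $(T^\omega_{k/N})_{k\geq 0}$ and passes to the limit $N \to \infty$ using strong continuity of $\alpha$ together with the standard "filling the gaps" estimate $\sup_{r \in [k/N,(k+1)/N]} \|M^\omega_r x - M^\omega_{k/N} x\|_p = O(1/N)$.
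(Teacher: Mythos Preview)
Your proposal is correct and follows essentially the same route as the paper: decompose $\mu_r$ via polar coordinates as an average over the unit sphere of one-parameter Birkhoff averages along the rays $s\mapsto(s\omega,0)$, apply the noncommutative Dunford--Schwartz maximal theorem (Lemma~\ref{lem:p estimate for Mt}) to each ray, and then integrate in $\omega$ using the triangle inequality for $L_p(\M;\ell_\infty)$. Your treatment is in fact slightly more careful than the paper's, since you explicitly verify that the Heisenberg cocycle vanishes on collinear vectors (so the rays are genuine one-parameter subgroups) and you spell out the finite-subset reduction for the Minkowski step.
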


For the proof, we need the noncommutative Dunford-Schwartz maximal ergodic theorem \cite{JuXu06}.

\begin{lemma}\label{lem:p estimate for Mt}
Let $(T_t)_{t>0}$ be a strong continuous semigroup of linear maps on $\M$. Each
$T_t$ for $t\geq0$ satisfies the following properties:
\begin{enumerate}[\rm (i)]
\item $T_t$ is a contraction on $\M$:
$\|T_tx\|_{\infty}\leq\|x\|_{\infty}$ for all $x\in\M$;
\item $T_t$ is positive: $T_tx\geq0$ if $x\geq0$;
\item $\tau\circ T_t\leq\tau$: $\tau(T_t(x))\leq\tau(x)$ for all $x\in
L_1(\M)\cap\M^+$.
\end{enumerate}
Let $1<p\leq\infty$, then we have
\begin{align}\label{p estimate for
Mt}\|{\sup_{t>0}}^+ M_t(x)\|_p\leq C_p\|x\|_p,\;\;\forall x\in
L_p(\M),\end{align}
where
$$M_t(x)=\frac{1}{t}\int^t_0T_sds.$$
\end{lemma}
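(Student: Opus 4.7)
The plan is to deduce this continuous-time maximal inequality from its discrete-time counterpart, which is the core result of Junge--Xu \cite{JuXu06}: for any linear $T:\M\to\M$ satisfying (i)-(iii) (with $T$ in place of $T_t$), the iterates $A_N(x)=\frac{1}{N}\sum_{k=0}^{N-1}T^k x$ obey $\|\sup_{N\ge1}^+ A_N x\|_p\le C_p\|x\|_p$ for $1<p\le\infty$, with $C_p\sim (p/(p-1))^2$ as $p\to 1^+$. That discrete theorem is itself proved by a noncommutative weak-type $(1,1)$ endpoint (a Cuculescu-type stopping construction adapted to ergodic averages) combined with the trivial $L_\infty$ bound via real interpolation. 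I will take it as a black box.

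Before discretizing, I would upgrade the hypothesis: duality applied to (iii) makes each $T_t$ a contraction on $L_1(\M)$, and complex interpolation then extends it to a contraction on every $L_p(\M)$, $1\le p\le\infty$. Strong continuity on $\M$ transfers to strong continuity of $t\mapsto T_t x$ in each $L_p(\M)$ with $p<\infty$, so $t\mapsto M_t x$ is norm continuous; consequently the noncommutative maximal norm $\|\sup_{t>0}^+ M_t x\|_p$ may be realized as the supremum over any countable dense subset of $\mathbb R_+$, for instance $\mathbb Q_+$.

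The discretization step ties $M_t$ to $A_N$ through the semigroup identity $T_{kh+u}=T_h^k T_u$. For fixed step $h>0$ and $t=Nh$,
\begin{align*}
M_{Nh}(x)=\frac{1}{N}\sum_{k=0}^{N-1}T_h^k y_h,\qquad y_h=\frac{1}{h}\int_0^h T_u x\,du,
\end{align*}
with $\|y_h\|_p\le\|x\|_p$ by contractivity. Applying the discrete theorem to $T_h$ and to $y_h$ controls $\|\sup_N^+ M_{Nh}x\|_p$ by a constant depending only on $p$. A general $t\in(Nh,(N+1)h)$ is handled by the convex split $M_t(x)=\tfrac{Nh}{t}M_{Nh}(x)+\tfrac{1}{t}\int_{Nh}^t T_s x\,ds$, whose remainder is $O(1/N)$ in $L_p$-norm; letting $h$ range over $\{1/q:q\in\mathbb N\}$ and taking countable unions then exhausts $\mathbb Q_+$ and, by the density reduction, all of $\mathbb R_+$.

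The main obstacle is the small-time window $0<t\le1$, which a single choice of $h$ does not reach. The remedy uses the crucial feature that the discrete Junge--Xu constant is independent of the underlying contraction, so applying it to $T_{1/q}$ for arbitrarily large $q$ uniformly bounds $\|\sup_{0<t\le 1}^+ M_t x\|_p$, once combined with strong continuity at $0^+$ to prevent the small-$t$ supremum from blowing up. Splitting $\sup_{t>0}^+$ into $\sup_{0<t\le 1}^+$ and $\sup_{t>1}^+$ and invoking the quasi-triangle inequality for the noncommutative maximal norm $L_p(\M;\ell_\infty)$ then yields the full estimate \eqref{p estimate for Mt}.
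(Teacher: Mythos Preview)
The paper does not prove this lemma at all; it is stated as the noncommutative Dunford--Schwartz maximal ergodic theorem and cited directly from Junge--Xu \cite{JuXu06}, where the continuous semigroup version is one of the main theorems. So there is no in-paper proof to compare against: your reduction of the continuous case to the discrete averages $A_N=\frac1N\sum_{k<N}T^k$ is the standard path (and is essentially how \cite{JuXu06} themselves pass from iterates to ergodic averages of a semigroup).

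Your outline is correct in spirit, but two steps are not fully justified as written. First, the ``convex split'' for $t\in(Nh,(N+1)h)$: the remainder $\frac{1}{t}\int_{Nh}^{t}T_sx\,ds$ is \emph{not} uniformly $O(1/N)$ in the $L_p(\M;\ell_\infty)$-norm --- for $t$ just above $h$ one has $N=1$ and the remainder is comparable to $\|x\|_p$, so you have not bounded $\|\sup_t^+(\text{remainder})\|_p$. The cleaner route (for $x\ge 0$, then decompose) is the operator domination $M_t(x)\le 2\,M_{\lceil t/h\rceil h}(x)$ valid for all $t\ge h$, which by the positive characterization of $L_p(\M;\ell_\infty)$ transfers the bound on $\sup_N^+ M_{Nh}$ to $\sup_{t\ge h}^+ M_t$ with an extra factor $2$. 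Second, ``letting $h$ range over $\{1/q:q\in\mathbb N\}$ and taking countable unions'' does not by itself keep the constant finite: a naive quasi-triangle inequality over countably many $q$ would blow up. The point you need is that $\|\sup^+_{t\in\Lambda}\cdot\|_p$ equals the supremum over \emph{finite} $J\subset\Lambda$, and any finite $J\subset\mathbb Q_+$ admits a single common denominator $q$, so one application of the discrete theorem to $T_{1/q}$ (with the $q$-independent constant $C_p$) already controls all of $J$. With these two fixes your argument goes through, and then the separate discussion of a ``small-time window $0<t\le1$'' is no longer needed.
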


The proof of Theorem \ref{thm:maximal uniform average ur} is given as follows.

\begin{proof}
Fix $x\in L_p(\M)$.
For an action $\alpha$ of $H^n$, we have
\begin{align*}
\alpha(\mu_r)x&=\frac1r\int^r_0\alpha(\sigma_s)xds\\
&=\frac1r\int^r_0\int_{S^n_s}\alpha(g)xd\sigma_s(g)ds\\
&=\int_{S^n_1}\frac1r\int^r_0\alpha(sv)xdsd\sigma_1(v).
\end{align*}
Hence by triangle inequality
$$\|{\sup_{r>0}}^{+}\alpha(\mu_r)x\|_p\leq\int_{S^n_1}\|{\sup_{r>0}}^{+}\frac1r\int^r_0\alpha(sv)xds\|_pd\sigma_1(v).$$
Fix $v\in S^n_1$,  $\frac1r\int^r_0\alpha(sv)ds$ are the ergodic averages $M_r$ of the semigroup $(T_s)_{s>0}$ defined as
$T_sx=\alpha(sv)x$. The desired result follows from Lemma \ref{lem:p estimate for Mt} since this semigroup satisfies (i)-(iii) in the lemma.
\end{proof}

\begin{remark}\label{rem:p estimate for Mt bar}
It is easy to observe that the same argument works also for the the uniform averages of the spherical means on $\bar{H}^n$. Thus inequality \eqref{maximal uniform average ur} remains true if we replace $\mu_r$ by $\bar{\mu}_r$ defined as
$$\bar{\mu}_r=(1/r)\int^r_0\bar{\sigma}_sds.$$
\end{remark}

%\begin{remark}
%The same argument works for the uniform averages of the spherical means on $G_n$ with $n\geq1$, i.e.  $\mu'_r=(1/r)\int^r_0\sigma'_sds$, since
%$$\alpha(\mu'_r)x=\alpha(m_K)\circ \frac 1r\int^r_0\alpha(\delta_{g_s})ds\circ\alpha(m_K)x.$$
%Here $(1/r)\int^r_0\alpha(\delta_{g_s})ds$ are the ergodic averages $M_r$ of the semigroup $(T_s)_{s>0}$ defined as
%$T_sx=\alpha(\delta_{g_s})x$. This semigroup satisfies (i)-(iii) in Lemma \ref{lem:p estimate for Mt}, whence we have
%\begin{align}\label{maximal uniform average upr}
%\|{\sup_{r>0}}^{+}\alpha(u'_r)x\|_p\leq C_{p,n}\|x\|_p,\;\forall x\in L_p(\M).
%\end{align}
%\end{remark}

\subsection{Proof of Proposition \ref{pro:maximal uniform average ma2}}
This proposition will be shown by two steps. The first step is the following noncommutative Littlewood-Paley $g$-function in $L_2(\M)$.

\begin{lemma}\label{lem:g function}
Let $n>1$. For any integer $1\leq m\leq n-1$ and $x\in\mathcal{D}$, define
$$g_m(x)=\Big(\int^{\infty}_0r^{2m-1}\big|\frac{d^m}{dr^m}\alpha(\sigma_r)x\big|^2dr\Big)^\frac 12.$$
Then we have $\|g_m(x)\|_2\leq C_m\|x\|_2.$
\end{lemma}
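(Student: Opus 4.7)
The plan is to reduce the $g$-function estimate to a uniform scalar bound on the spherical functions via the spectral decomposition \eqref{spectral decomposition}. First I would note that each $\alpha(\sigma_r)$ is self-adjoint on $L_2(\M)$ (since $\sigma_r$ is symmetric in $M(H^n,K_n)$), and for $x\in\mathcal{D}$ the map $r\mapsto\alpha(\sigma_r)x$ is $C^\infty$ into $\M$ by the discussion at the start of Section 5, so differentiating \eqref{spectral decomposition} $m$ times yields
\[
\frac{d^m}{dr^m}\alpha(\sigma_r)x=\int_{\Sigma_\alpha}\varphi_\zeta^{(m)}(r)\,de_\zeta x.
\]
Taking the $L_2(\M)$-norm squared and applying Tonelli then gives
\[
\|g_m(x)\|_2^2=\int_{\Sigma_\alpha}\Bigl(\int_0^\infty r^{2m-1}|\varphi_\zeta^{(m)}(r)|^2\,dr\Bigr)\,d\mu_x(\zeta),
\]
where $d\mu_x(\zeta)=\langle de_\zeta x,x\rangle$ is a positive scalar measure of total mass $\|x\|_2^2$. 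Hence the lemma reduces to the uniform spherical estimate $\sup_{\zeta\in\Sigma}\int_0^\infty r^{2m-1}|\varphi_\zeta^{(m)}(r)|^2\,dr\leq C_m$.

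Next I would split the analysis according to the two nontrivial families in $\Sigma=\Sigma_L\cup\Sigma_B\cup\{0\}$. The character $\zeta=0$ is trivial since $\varphi_0\equiv1$ forces $\varphi_0^{(m)}\equiv0$ for $m\geq1$. For $\zeta=u\in\Sigma_B$ one has $\eta_u(r)=\psi(ur)$ with $\psi(s)=2^{n-1}(n-1)!\,s^{-(n-1)}J_{n-1}(s)$, so the substitution $s=ur$ absorbs $u$ entirely and reduces the estimate to the single scalar integral $\int_0^\infty s^{2m-1}|\psi^{(m)}(s)|^2\,ds$; using the recursion $(d/ds)(s^{-\nu}J_\nu(s))=-s^{-\nu}J_{\nu+1}(s)$ and the classical asymptotics $J_\nu(s)=O(s^{-1/2})$ at infinity and $J_\nu(s)=O(s^\nu)$ at the origin, this integral converges at infinity exactly when $m\leq n-1$ and at the origin when $m\geq1$, giving a bound independent of $u$. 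For $\zeta=(\lambda,k)\in\Sigma_L$ the parallel scaling $s=\sqrt{|\lambda|/2}\,r$ gives $\varphi_k^\lambda(r)=\Lambda_k(s)$ with $\Lambda_k(s)=\frac{k!(n-1)!}{(k+n-1)!}L_k^{n-1}(s^2)e^{-s^2/2}$, and an explicit change of variables shows that $\int_0^\infty r^{2m-1}|(\varphi_k^\lambda)^{(m)}(r)|^2\,dr=\int_0^\infty s^{2m-1}|\Lambda_k^{(m)}(s)|^2\,ds$, so all $\lambda$-dependence disappears.

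The hard part will be to bound this last integral uniformly in $k\in\mathbb{N}$. I would use the standard three-region pointwise/mean estimates for Laguerre functions (as exploited by Nevo and Thangavelu \cite{NeTh97}, splitting $s^2\lesssim 1/k$, $1/k\lesssim s^2\lesssim k$, $s^2\gtrsim k$), combined with the differentiation identity $(L_k^{\alpha})'(t)=-L_{k-1}^{\alpha+1}(t)$ applied iteratively, to transfer $m$ derivatives in $s$ onto Laguerre polynomials of shifted indices and extract sharp decay. The normalization factor $\frac{k!(n-1)!}{(k+n-1)!}\asymp k^{-(n-1)}$ is crucial in cancelling the polynomial growth coming from the Laguerre estimates, and the admissible range $m\leq n-1$ emerges from exactly the same dimensional balance as in the Bessel case. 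Assembling these pieces yields the required uniform bound $C_m$ on the scalar integral, which proves the lemma.
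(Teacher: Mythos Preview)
Your proposal is correct and follows essentially the same approach as the paper: both reduce the $L_2$ estimate via the spectral decomposition \eqref{spectral decomposition} to the uniform scalar bound $\sup_{\zeta\in\Sigma}\int_0^\infty r^{2m-1}|\varphi_\zeta^{(m)}(r)|^2\,dr\leq C_m(n)$. The only difference is that the paper simply cites this bound as Lemma~4.4 of \cite{NeTh97} and omits the details, whereas you sketch its proof by the Bessel/Laguerre dichotomy and the dilation arguments removing the $u$- and $\lambda$-dependence---which is exactly how Nevo and Thangavelu establish that lemma.
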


This lemma can be deduced from the following equality
$$\big\|\frac{d^m}{dr^m}\alpha(\sigma_r)x\big\|^2_2=\int_{\Sigma_\alpha}\big|\frac{d^m}{dr^m}\varphi_\zeta(r)\big|^2\langle de_\zeta x,x\rangle,$$
and the spectral estimates---Lemma 4.4 of \cite{NeTh97}: For $1\leq m\leq n-1$ with $n\geq2$, we have
\begin{align*}%\label{L2 spectral estimates}
\sup_{\zeta\in\Sigma}\int^{\infty}_0\Big|\frac{d^m}{dr^m}\varphi_{\zeta}(r)\Big|^2r^{2m-1}dr\leq C_m(n),
\end{align*}
where $C_m(n)$ depends only on the algebra $M(H^n,K)$. We omit the details. 

This lemma, together with Theorem \ref{thm:maximal uniform average ur}, implies the second step ---the following maximal inequality for fractional averages in the integer case.

\begin{lemma}\label{lem:maximal uniform average maz}
Let $n>1$. Then we have
\begin{align}\label{maximal uniform average maz}
\|{\sup_{r>0}}^+M^{-n+2}F_x(r)\|_2\leq C\|x\|_2,\;\forall x\in\mathcal{D}.
\end{align}
\end{lemma}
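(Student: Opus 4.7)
The plan is to rewrite $M^{-(n-2)}F_x(r)$ as a Hardy-type uniform average of the $(n-2)$-th and $(n-1)$-th derivatives of $F_x$, and then to control the two resulting pieces by the $g$-function estimates of Lemma~\ref{lem:g function} and (in one residual case) by Theorem~\ref{thm:maximal uniform average ur}.

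\textbf{Step 1 (algebraic identity).} Since $s\mapsto s^{n-1}D^{n-2}F_x(s)$ is smooth on $[0,\infty)$ and vanishes at $s=0$ whenever $n\geq 2$, the fundamental theorem of calculus together with the Leibniz rule gives
\begin{equation*}
r^{n-1}D^{n-2}F_x(r)=\int_0^r\bigl[(n-1)s^{n-2}D^{n-2}F_x(s)+s^{n-1}D^{n-1}F_x(s)\bigr]\,ds.
\end{equation*}
Dividing by $r$ decomposes $M^{-(n-2)}F_x(r)=(n-1)A_1(r)+A_2(r)$, where $A_j(r)=\tfrac{1}{r}\int_0^r s^{n-3+j}D^{n-3+j}F_x(s)\,ds$ for $j=1,2$.

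\textbf{Step 2 (column and row estimates).} The operator Cauchy--Schwarz inequality $\bigl|\int_0^r f(s)\,ds\bigr|^2\leq r\int_0^r|f(s)|^2\,ds$ (valid for $L_2(\M)$-valued $f$) together with the pointwise bound $1/r\leq 1/s$ on $[0,r]$ yields, setting $m=n-3+j$,
\begin{equation*}
|A_j(r)|^2\leq\frac{1}{r}\int_0^r s^{2m}|D^mF_x(s)|^2\,ds\leq\int_0^\infty s^{2m-1}|D^mF_x(s)|^2\,ds=g_m(x)^2,
\end{equation*}
which is a uniform operator bound in $r$. Whenever $m\in\{1,\dots,n-1\}$ (i.e.\ for $A_2$ with any $n\geq 2$, and for $A_1$ with $n\geq 3$), Lemma~\ref{lem:g function} gives $\|g_m(x)\|_2\leq C\|x\|_2$, so $\|(A_j(r))_r\|_{L_2(\M;\ell_\infty^c)}^2=\|\sup_r^+|A_j(r)|^2\|_1\leq\|g_m(x)\|_2^2\leq C\|x\|_2^2$. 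The analogous row-Cauchy--Schwarz $A_j(r)A_j(r)^*\leq\frac{1}{r}\int_0^r s^{2m}(D^mF_x(s))(D^mF_x(s))^*\,ds$ gives the matching bound in $L_2(\M;\ell_\infty^r)$, using that $\tau(aa^*)=\tau(a^*a)$ makes the corresponding row $g$-function have the same $L_2$-norm as $g_m(x)$. Interpolation via Lemma~\ref{lem:l8c+l8r} then upgrades the column and row bounds to $\|\sup_r^+A_j(r)\|_2\leq C\|x\|_2$.

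\textbf{Step 3 (residual case and conclusion).} The only case left out of Step~2 is $A_1$ with $n=2$, where $m=0$ falls outside the range of Lemma~\ref{lem:g function}; indeed the putative ``$g_0(x)^2$" integral $\int_0^\infty|F_x(s)|^2/s\,ds$ diverges because $F_x(0)=x\neq 0$. But here $A_1(r)=\tfrac{1}{r}\int_0^rF_x(s)\,ds=\alpha(\mu_r)x$, so the bound $\|\sup_r^+A_1(r)\|_2\leq C\|x\|_2$ is exactly the $p=2$ case of Theorem~\ref{thm:maximal uniform average ur}. Combining the bounds for $A_1$ and $A_2$ finishes the proof. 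The main technical subtlety is the passage from the separate column and row maximal estimates to the full $L_2(\M;\ell_\infty)$ bound via Lemma~\ref{lem:l8c+l8r}, since $A_j(r)$ is not self-adjoint and so the simpler self-adjoint characterisation of the maximal norm cannot be invoked directly; once this is handled, the rest is just the fundamental theorem of calculus plus operator Cauchy--Schwarz.
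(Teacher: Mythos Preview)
Your proof is correct and follows essentially the same route as the paper's: the same integration-by-parts identity splits $M^{-(n-2)}F_x(r)$ into the two Hardy averages you call $A_1,A_2$, and the paper bounds each by the $g$-functions $g_{n-2}(x),g_{n-1}(x)$ via Kadison--Schwarz and L\"owner--Heinz, handling the exceptional $n=2$ piece by Theorem~\ref{thm:maximal uniform average ur} exactly as you do.

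The one point where you diverge is the column/row interpolation through Lemma~\ref{lem:l8c+l8r}. This works, but it is unnecessary: the paper simply reduces to self-adjoint (indeed positive) $x\in\mathcal D$, and then $F_x(s)=\alpha(\sigma_s)x$ and all its $r$-derivatives are self-adjoint (as norm-limits of self-adjoint difference quotients), so each $A_j(r)$ is self-adjoint. One then has the two-sided operator inequality $-g_m(x)\le A_j(r)\le g_m(x)$ directly from $|A_j(r)|^2\le g_m(x)^2$ and operator monotonicity of the square root, which gives the $L_2(\M;\ell_\infty)$ bound without passing through $\ell_\infty^c$ and $\ell_\infty^r$. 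Your detour is harmless, but the self-adjointness you worried about is actually available after a standard reduction.
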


\begin{proof}
We first prove this result in the case $n=2$. Then the cases $n\geq3$ are proved using induction. Without loss of generality, we assume $x\in L^+_2(\M)\cap\mathcal{D}$.

The case $n=2$. As in \cite{Nev94} we try to compare $\alpha(\sigma_t)$ with $\alpha(\mu_r)$ in order to make use of the maximal inequality for $\alpha(\mu_r)$. Fix $y\in L_2(\M)$, and consider the differentiable scalar function $f_{x,y}(r)=\tau(\alpha(\sigma_r)(x)y^*)$. Using integration by parts, by (\ref{mean ergodic theorem t0}), we obtain the equality
$$\int^r_0sf'_{x,y}(s)ds=rf_{x,y}(r)-\int^r_0f_{x,y}(s)ds.$$
This equality is valid for all $y\in L_2(\M)$, hence it also holds in $L_2$ for the operator-valued function $F_x(r)=\alpha(\sigma_r)x$. Therefore, dividing by $r$, we get
$$\alpha(\sigma_r)x-\alpha(\mu_r)x=\frac{1}{r}\int^r_0s\frac{d}{ds}\alpha(\sigma_s)xds.$$
Now using the fact $x\leq|x|$ for a self-adjoint operator, we have
\begin{align*}
\alpha(\sigma_r)x-\alpha(\mu_r)x&\leq\left(\Big|\frac{1}{r}\int^r_0s\frac{d}{ds}\alpha(\sigma_s)xds\Big|^{2}\right)^{\frac12}\\
&\leq\left(\frac{1}{r^2}\int^r_0sds\int^r_0s\big|\frac{d}{ds}\alpha(\sigma_s)x\big|^2ds\right)^{\frac12}\\
&=\frac{1}{\sqrt{2}}\left(\int^r_0s\big|\frac{d}{ds}\alpha(\sigma_s)x\big|^2ds\right)^{\frac12}\leq \frac{1}{\sqrt{2}}g_1(x).
\end{align*}
We have used the Kadison-Schwarz inequality in the second inequality as well as  L\"owner-Heinz inequality in the second and the third inequality. Then we finish the proof in this case by triangle inequality in $L_2(\M;\ell_{\infty})$, Theorem \ref{thm:maximal uniform average ur} and Lemma \ref{lem:g function}.

The case $n\geq3$. Using integration by parts,
\begin{align*}
\int^r_0s^{n-1}\frac{d^{n-1}}{ds^{n-1}}F_x(s)ds&=s^{n-1}\frac{d^{n-2}}{ds^{n-2}}F_x(s)\big|^{r}_0-\int^r_0(n-1)s^{n-2}\frac{d^{n-2}}{ds^{n-2}}F_x(s)ds\\
&=r^{n-1}\frac{d^{n-2}}{dr^{n-2}}F_x(r)-\int^r_0(n-1)s^{n-2}\frac{d^{n-2}}{ds^{n-2}}F_x(s)ds.
\end{align*}
Dividing by $t$ on both sides, by Kadison-Schwarz inequality and L\"owner-Heinz inequality, we get
\begin{align*}
r^{n-2}\frac{d^{n-2}}{dr^{n-2}}F_x(r)&=\frac{1}{r}\int^r_0s^{n-1}\frac{d^{n-1}}{ds^{n-1}}F_x(s)ds+\frac{1}{r}\int^r_0(n-1)s^{n-2}\frac{d^{n-2}}{ds^{n-2}}F_x(s)ds\\
&\leq \left(\frac{1}{r^2}\int^r_0sds\int^r_0s^{2n-3}\big|\frac{d^{n-1}}{ds^{n-1}}\alpha(\sigma_s)x\big|^2ds\right)^{\frac12}\\
&\;\;\;\;\;\;\;\;\;\;\;\;\;+(n-1)\left(\frac{1}{r^2}\int^r_0sds\int^r_0s^{2n-5}\big|\frac{d^{n-2}}{ds^{n-2}}\alpha(\sigma_s)x\big|^2ds\right)^{\frac12}\\
&\leq \frac{1}{\sqrt{2}}g_{n-1}(x)+\frac{1}{\sqrt{2}}(n-1)g_{n-2}(x).
\end{align*}
Finally, Lemma \ref{lem:g function} implies the desired result by the triangle inequality in $L_{2}(\M;\ell_{\infty})$.
\end{proof}

\begin{remark}
The arguments in the case $n=2$ have already provided the proof of the spherical maximal inequality in $L_2$ for dimension $n\geq2$,
\begin{align*}
\|{\sup_{r>0}}^+\alpha(\sigma_r)x\|_2\leq C\|x\|_2,\;\forall x\in L_2(\mathcal M).
\end{align*}
\end{remark}

The following result has been obtained in  Lemma 5.7 of \cite{JuXu06}.
\begin{lemma}\label{lem:elementary}
Let $x=(x_s)_s\in L_p(\M;\ell_\infty)$ and $(z_{r,s})_{r,s}\subset\mathbb{C}$. Then
\begin{align*}
\left\|\Big(\int^\infty_0 z_{r,s}x_sds\Big)_{r}\right\|_{L_p(\ell_{\infty})}\leq \sup_r\Big(\int^\infty_0|z_{r,s}|ds\Big)\|(x_s)_{s}\|_{L_p(\ell_{\infty})}
\end{align*}
\end{lemma}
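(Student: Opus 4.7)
The plan is to reduce the inequality to the defining factorization of the norm on $L_p(\M;\ell_\infty)$ and then interchange the integration with that factorization. Since the right-hand side is written as a product of two suprema (over factorizations of $x$ and over the row index $r$), the key observation is that a single factorization of $x$ serves to factor the whole integrated family $\bigl(\int z_{r,s}x_s\,ds\bigr)_r$ at once; only the bounded middle factor changes with $r$.

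More precisely, given $\varepsilon>0$, choose a factorization $x_s=ay_sb$ with $a,b\in L_{2p}(\M)$, $y_s\in L_\infty(\M)$, such that
\begin{equation*}
\|a\|_{2p}\,\|b\|_{2p}\,\sup_s\|y_s\|_\infty \;\le\; (1+\varepsilon)\,\|(x_s)_s\|_{L_p(\ell_\infty)}.
\end{equation*}
Since $a$ and $b$ are independent of $s$, I can pull them out of the $s$-integral to obtain
\begin{equation*}
\int_0^\infty z_{r,s}\,x_s\,ds \;=\; a\,\tilde y_r\,b,\qquad \tilde y_r:=\int_0^\infty z_{r,s}\,y_s\,ds,
\end{equation*}
where the integral defining $\tilde y_r$ converges in $L_\infty(\M)$ (or at least weakly) under the assumption that the integrals on the left-hand side exist. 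By the triangle inequality for Bochner integrals,
\begin{equation*}
\|\tilde y_r\|_\infty \;\le\; \sup_s\|y_s\|_\infty \cdot \int_0^\infty |z_{r,s}|\,ds,
\end{equation*}
so that $\sup_r\|\tilde y_r\|_\infty\le \sup_s\|y_s\|_\infty\cdot \sup_r\int_0^\infty|z_{r,s}|\,ds$.

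Plugging the factorization $\bigl(\int z_{r,s}x_s\,ds\bigr)_r = a\,(\tilde y_r)_r\,b$ into the definition of the $L_p(\M;\ell_\infty)$-norm yields
\begin{equation*}
\Bigl\|\bigl(\int_0^\infty z_{r,s}x_s\,ds\bigr)_r\Bigr\|_{L_p(\ell_\infty)}
\;\le\;\|a\|_{2p}\,\sup_r\|\tilde y_r\|_\infty\,\|b\|_{2p}
\;\le\;(1+\varepsilon)\sup_r\!\int_0^\infty |z_{r,s}|\,ds\;\|(x_s)_s\|_{L_p(\ell_\infty)}.
\end{equation*}
Letting $\varepsilon\to 0$ finishes the argument. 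The only nontrivial point is verifying that the weak (or Bochner) integral $\tilde y_r$ is well defined in $L_\infty(\M)$ with the claimed operator norm bound; this is the main technical step but is standard once one notes that $\sup_s\|y_s\|_\infty<\infty$ and $s\mapsto z_{r,s}$ is absolutely integrable. Measurability of $s\mapsto y_s$ can be arranged by reducing first to finite families $J\subset\mathbb R_+$ via the characterization of $L_p(\M;\ell_\infty(\Lambda))$ recalled earlier in the paper, passing to the supremum over finite $J$ afterwards.
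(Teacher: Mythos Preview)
Your argument is correct and is precisely the natural proof via the factorization definition of $L_p(\M;\ell_\infty)$. Note that the paper itself does not give a proof of this lemma: it simply cites Lemma~5.7 of Junge--Xu \cite{JuXu06}, whose argument is exactly the one you outline (factor $x_s=ay_sb$, integrate the middle factor, bound $\|\tilde y_r\|_\infty$ by the total variation of $z_{r,\cdot}$). One small remark: your closing comment about measurability is slightly misdirected---the reduction to finite $J$ in the characterization of $L_p(\M;\ell_\infty(\Lambda))$ concerns the \emph{output} index $r$, whereas the integrability issue lives in the \emph{input} variable $s$; the clean fix is to approximate the $s$-integral by finite Riemann sums (for which the factorization argument is purely algebraic) and pass to the limit, or alternatively to reduce to positive $x_s$ and use the characterization $0\le x_s\le a$, which makes $y_s=a^{-1/2}x_sa^{-1/2}$ manifestly measurable.
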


Now we are ready to give the proof of Proposition \ref{pro:maximal uniform average ma2}.

\begin{proof}
Assume $x\in L^+_2(\M)\cap\mathcal{D}$. Given $a\in\mathbb{C}$ with $-n+1+1/2<Re(a)<-n+2$, we can write $a=-\delta-n+2+i\gamma$ with $0<\delta=-n+2-Re(a)<1/2$. By semigroup property of the fractional integrals, we have
\begin{align*}
I^{-\delta+i\gamma-n+2}F_x(r)&=I^{-\delta+i\gamma}I^{-n+2}F_x(r)\\
&=\frac{1}{\Gamma(-\delta+i\gamma)}\int^{r/2}_0(r-s)^{-\delta-1+i\gamma}I^{-n+2}F_x(s)ds\\
&\;\;\;\;\;\;\;\;\;+\frac{1}{\Gamma(1-\delta+i\gamma)}(r/2)^{-\delta+i\gamma}I^{-n+2}F_x(r/2)\\
&\;\;\;\;\;\;\;\;\;+\frac{1}{\Gamma(1-\delta+i\gamma)}\int_{r/2}^r(r-s)^{-\delta+i\gamma}\frac{d}{ds}I^{-n+2}F_x(s)ds.
\end{align*}
Multiplying $r^{\delta-i\gamma+n-2}$ on both sides, we have
\begin{align*}
&M^{-\delta+i\gamma-n+2}F_x(r)\\
&=\frac{1}{\Gamma(-\delta+i\gamma)}\int^{r/2}_0r^{\delta-i\gamma}(r-s)^{-\delta-1+i\gamma}M^{-n+2}F_x(s)ds\\
&\;\;\;\;\;+\frac{1}{\Gamma(1-\delta+i\gamma)}(1/2)^{-\delta+i\gamma}M^{-n+2}F_x(r/2)\\
&\;\;\;\;\;+\frac{1}{\Gamma(1-\delta+i\gamma)}\int_{r/2}^r(r-s)^{-\delta+i\gamma}r^{\delta-1/2+i\gamma}r^{n-3/2}I^{-n+1}F_x(s)ds\\
&=I_r(x)+II_r(x)+III_r(x).
\end{align*}
The second term is trivially estimated by Lemma \ref{lem:maximal uniform average maz}, hence
$$\|(II_r(x))_{r>0}\|_{L_2(\ell_{\infty})}\leq c_{n,\delta}\|x\|_2.$$ The first term follows from Lemma \ref{lem:elementary} and \ref{lem:maximal uniform average maz}, since $-\delta+i\gamma$ is not a pole of $\Gamma(z)$, whence
\begin{align*}
&\frac{1}{|\Gamma(-\delta+i\gamma)|}\int^{r/2}_0|r^{\delta-i\gamma}(r-s)^{-\delta-1+i\gamma}|ds\\
&\leq c_{\gamma}\frac{1}{|\Gamma(-\delta)|}\int^{1/2}_0(1-s)^{-\delta-1}ds<\infty.
\end{align*}
Using the duality argument  as in the proof of Proposition \ref{pro:maximal uniform average map}, the $L_p(\ell_\infty)$-norm of $III_r(x)$ is controlled by the  $L_p(\ell_\infty)$-norm of the following sequence modulo a constant depending on $\gamma$,
$$\frac{1}{\Gamma(1-\delta)}\int_{r/2}^r(r-s)^{-\delta}r^{\delta-1/2}r^{n-3/2}|I^{-n+1}F_x(s)|ds.$$
By Kadison-Schwarz inequality and L\"owner-Heinz inequality, noting that $r\leq 2s$,  the previous quantity is controlled by the following expression
\begin{align*}
&\frac{1}{\Gamma(1-\delta)}\left(r^{2\delta-1}\int^r_{r/2}(r-s)^{-2\delta}ds\int^r_{r/2}r^{2n-3}\big|I^{-n+1}F_x(s)\big|^2ds\right)^{\frac12}\\
&\leq c_{\delta}\left(\int^r_{r/2}s^{2n-3}\big|I^{-n+1}F_x(s)\big|^2ds\right)^{\frac12}\leq c_{\delta}g_{n-1}(x).
\end{align*}
Hence by Lemma \ref{lem:g function}, $$\|(III_r(x))_{r>0}\|_{L_2(\ell_{\infty})}\leq c_{n,a}\|x\|_2.$$
\end{proof}

\section{Maximal ergodic theorems  II}
In this section, we show the noncommutative maximal ergodic theorem for the $W^*$-dynamical system $(\M,\tau,\bar{H}^n,\alpha)$. The following transference principle whose proof  will appear in the subsequent work \cite{Hon} reduces the maximal ergodic theorem for the abstract dynamical system to the specific one $(L_{\infty}(\bar{H}^n)\overline{\otimes}\mathcal{\mathcal{M}},\int\otimes\tau,\bar{H}^n,T\otimes id_{\mathcal{M}})$, where $T$ is the translation on $\bar{H}^n$.

\begin{lemma}\label{lem:transference}
Let $1<p\leq\infty$. If 
\begin{align*}
\|{\sup_{r\geq1}}^+f\ast \bar{\sigma}_r\|_p\leq C_{p}\|f\|_p,\;\forall f\in L_p(L_\infty(\bar{H}^n)\overline{\otimes}\M),
\end{align*}
then we have
\begin{align*}
\|{\sup_{r\geq1}}^+\alpha(\bar{\sigma}_r)x\|_p\leq C_{p}\|x\|_p,\;\forall x\in L_p(\M).
\end{align*}
\end{lemma}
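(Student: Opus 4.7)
The plan is to adapt the classical Calder\'on transference principle to the noncommutative setting, exploiting that the reduced Heisenberg group $\bar{H}^n$ is amenable: as a central extension of $\mathbb{R}^{2n}$ by the compact group $\mathbb{R}/\mathbb{Z}$, it admits the explicit F\o lner sequence $K_N=\{(z,t)\in\bar{H}^n:|z|\leq N\}$. Fix $R>1$ and a compact symmetric neighborhood $B_R$ of the identity containing the supports of $\bar{\sigma}_r$ for every $1\leq r\leq R$; for a large compact $K\subset\bar{H}^n$ set $K_R=\{h\in K:hB_R\subset K\}$, so that the F\o lner property lets $|K_R|/|K|$ be made arbitrarily close to $1$ by enlarging $K$ along this sequence.

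I will first introduce the transference embedding $\Phi_K\colon L_p(\M)\to L_p(L_\infty(\bar{H}^n)\overline{\otimes}\M)$ defined by $\Phi_K(x)(h)=\alpha(h)x\cdot\mathbf{1}_K(h)$. The $L_p$-strong continuity of $\alpha$ recalled in Section~2 makes $\Phi_K(x)$ Bochner measurable, and trace-preservation of $\alpha$ gives $\|\Phi_K(x)\|_p=|K|^{1/p}\|x\|_p$. Unfolding the convolution and using that $h\cdot B_R\subset K$ when $h\in K_R$ yields the pointwise identity
$$(\Phi_K(x)*\bar{\sigma}_r)(h)=\alpha(h)\bigl(\alpha(\bar{\sigma}_r)x\bigr),\qquad h\in K_R,\ 1\leq r\leq R.$$
To strip off the $h$-dependent prefactor, I introduce the pointwise twist $\Psi F(h)=\alpha(h)(F(h))$ on $\mathcal{N}_{K_R}:=L_\infty(K_R)\overline{\otimes}\M$. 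Weak-$*$ continuity of $\alpha$ ensures that $\Psi$ maps $\mathcal{N}_{K_R}$ into itself as a normal trace-preserving $*$-automorphism, and applying it to the Pisier--Junge factorizations $F_r=AY_rB$ defining the maximal norm shows that $\Psi$ extends to an isometry of $L_p(\mathcal{N}_{K_R};\ell_\infty)$. Since $\Psi^{-1}$ sends $h\mapsto\alpha(h)\alpha(\bar{\sigma}_r)x$ to the constant function $\mathbf{1}_{K_R}\otimes\alpha(\bar{\sigma}_r)x$, this yields
$$\bigl\|\bigl(\alpha(h)\alpha(\bar{\sigma}_r)x\bigr)_{1\leq r\leq R}\bigr\|_{L_p(\mathcal{N}_{K_R};\ell_\infty)}=|K_R|^{1/p}\bigl\|\bigl(\alpha(\bar{\sigma}_r)x\bigr)_{1\leq r\leq R}\bigr\|_{L_p(\M;\ell_\infty)}.$$

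Combining the two identities with the hypothesis applied to $\Phi_K(x)$, and invoking the finite-subset characterization of $L_p(\M;\ell_\infty(\Lambda))$ from Section~2 to restrict the supremum to $K_R$ and to $r\in[1,R]$, will deliver
$$\bigl\|\bigl(\alpha(\bar{\sigma}_r)x\bigr)_{1\leq r\leq R}\bigr\|_{L_p(\M;\ell_\infty)}\leq C_p\bigl(|K|/|K_R|\bigr)^{1/p}\|x\|_p.$$
Letting $K$ run through the F\o lner sequence absorbs the ratio into $1$; the final limit $R\to\infty$, combined with the $L_p$-continuity of $r\mapsto\alpha(\bar{\sigma}_r)x$ (so the supremum over a countable dense subset of $[1,\infty)$ controls the continuous one) and the monotone characterization of the $L_p(\ell_\infty)$-norm, completes the proof. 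I anticipate the main obstacle to be a careful verification that $\Psi$ truly defines a normal $*$-automorphism of $\mathcal{N}_{K_R}$ and extends isometrically to the maximal-function space: this reduces to weak-$*$ measurability of $h\mapsto\alpha(h)F(h)$ on a dense subalgebra and to the invariance of the Pisier--Junge factorization under $\Psi$. By contrast, the explicit F\o lner construction and the continuous-parameter limit are comparatively routine given the amenability of $\bar{H}^n$ and the Junge--Xu framework.
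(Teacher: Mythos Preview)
The paper does not actually prove this lemma: the sentence introducing it states explicitly that ``the following transference principle whose proof will appear in the subsequent work \cite{Hon}\ldots'', so there is no in-paper argument to compare against.

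Your proposal is a correct noncommutative adaptation of the classical Calder\'on transference principle, and this is the standard (and essentially the only natural) route to such a statement; it is very likely the argument the deferred reference contains. The ingredients you isolate are exactly the right ones: (i) the embedding $\Phi_K(x)(h)=\mathbf{1}_K(h)\,\alpha(h)x$ with $\|\Phi_K(x)\|_p=|K|^{1/p}\|x\|_p$ by trace-preservation; (ii) the identity $(\Phi_K(x)*\bar{\sigma}_r)(h)=\alpha(h)\alpha(\bar{\sigma}_r)x$ for $h\in K_R$, which uses that $\bar{\sigma}_r$ is inversion-symmetric; (iii) the twist $\Psi$, which is a normal trace-preserving $*$-automorphism of $L_\infty(K_R)\overline{\otimes}\M$ because $\mathbf{1}_{K_R}$ is a central projection and $h\mapsto\alpha(h)$ is weak-$*$ continuous, and hence acts isometrically on $L_p(\ell_\infty)$ by transporting any Pisier--Junge factorization $AY_rB$ to $\Psi(A)\Psi(Y_r)\Psi(B)$; (iv) the explicit F{\o}lner sets $K_N=\{(z,t):|z|\le N\}$, for which $|K_R|/|K_N|\ge((N-R)/N)^{2n}\to 1$. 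The only places requiring care are the ones you already flag: the well-definedness of $\Psi$ as a von Neumann algebra automorphism (handled by approximating by simple tensors and using weak-$*$ continuity of $\alpha$) and the passage from $r\in[1,R]$ to $r\ge 1$ via the finite-subset characterization of $L_p(\ell_\infty(\Lambda))$ and $L_p$-continuity of $r\mapsto\alpha(\bar{\sigma}_r)x$. Both are routine within the Junge--Xu framework the paper relies on.
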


For this particular dynamical system, every operator-valued function $f\in L_2(\M)$ can be rewritten as $f=f_B+f_L$. Here the spectral measure of $f_B$ is supported in $\Sigma_B$, while the spectral measure of $f_L$ is supported in $\Sigma_L$. Moreover, by \cite{HuRi80} (see also \cite{NeTh97}), we have an explicit formula of $f_B$,
$$f_B(z,t)=\mathcal{E}f(z,t)=\frac{1}{2\pi}\int^{2\pi}_0f(z,s)ds.$$
Clearly, this averaging operator $\mathcal{E}$ can be extended contractively to all $L_p(\M)$ with $1\leq p\leq\infty$. Hence the operator $I-\mathcal{E}$ is bounded on all $L_p(\M)$ with $1\leq p\leq\infty$. Therefore for each $f\in L_p(\M)$, we can write $f=f_B+f_L=\mathcal{E}f+(I-\mathcal{E})f$ with $f_B, f_L\in L_p(\M)$.

Let $n>1$ and $2n/(2n-1)<p\leq\infty$. Theorem  \ref{thm:maximal radial average reduced}  follows from the following two inequalities by triangle inequality:
\begin{align}\label{map fb}
\|{\sup_{r>0}}^+f_B\ast\bar{\sigma}_r\|_p\leq C_p\|f_B\|_p
\end{align}
and
\begin{align}\label{map fl}
\|{\sup_{r\geq1}}^+f_L\ast\bar{\sigma}_r\|_p\leq C_p\|f_L\|_p.
\end{align}
Since $f_B$ does not depend on the central variable, it can be identified with a function $\tilde{f}_B$ on the Euclidean space $\mathbb{R}^{2n}$. Furthermore, it is trivial that $f_B\ast\bar{\sigma}_r=\tilde{f}_B\ast\tilde{\sigma}_r$, where $\tilde{\sigma}_r$ are the sphere averages on $\mathbb{R}^{2n}$. Hence inequality (\ref{map fb}) follows from Proposition 4.1 in  \cite{Hon14}.

To deal with inequality (\ref{map fl}), we shall again embed  $f_L\ast\bar{\sigma}_r$ into an analytic family of operator-valued functions and using analytic interpolation arguments.
As in the case  $\mathcal{N}=\mathbb{C}$ in Page 327 of \cite{NeTh97}, we have the following explicit expression of $f\ast\bar{\sigma}_r$,
\begin{align}\label{explicite expression 2}
f\ast\bar{\sigma}_r(z,t)=\sum_{\lambda\in\mathbb{Z}}\left(\sum^\infty_{k=0}\psi^{n-1}_k(\sqrt{|\lambda|}r)f^{\lambda}\ast_{\lambda}\varphi^\lambda_k(z)\right)e^{-i\lambda t},
\end{align}
 for all $f=f_L$ belonging to $\mathscr{S}(H^n)\otimes S_{\mathcal{N}}$, which is dense in all $L_p(\M)$. In the sequel, a nice function always means a function in this class.
The notations are explained as follows. The expression (\ref{explicite expression 2}) can be deduced as follows. Note first that
$$f\ast\bar{\sigma}_r(z,t)=\sum_{\lambda\in\mathbb Z}(f\ast\bar{\sigma}_r)^\lambda(z)e^{-i\lambda t},$$
where $f^\lambda(z)$ stands for the partial inverse Fourier transform
\begin{align}\label{partial inverse fourier transform}
f^\lambda(z)=\frac{1}{2\pi}\int^{2\pi}_0f(z,t)e^{i\lambda t}dt.
\end{align}
Now an easy calculation shows that
$$(f\ast\bar{\sigma}_r)^\lambda(z)=f^\lambda\ast_\lambda\bar{\sigma}_r(z)$$
where $\ast_\lambda$ is the $\lambda$-twisted convolution for functions on $\mathbb{C}^n$ given by
\begin{align}\label{twisted convolution}
g\ast_\lambda h(z)=\int_{\mathbb{C}_n}g(z-\omega)h(\omega)e^{i\frac{\lambda}{2}Im(z\cdot \bar{\omega})}d\omega.
\end{align}
As seen from Theorem 4.1 of \cite{Tha91}, for operator-valued functions, we also have
$$f^\lambda\ast_\lambda\bar{\sigma}_r(z)=\sum^\infty_{k=0}\psi^{n-1}_k(\sqrt{|\lambda|}r)f^{\lambda}\ast_{\lambda}\varphi^\lambda_k(z)$$
where
$\varphi^\lambda_k(z)$ is the dilated Laguerre function
$$\varphi^\lambda_k(z)=L^{n-1}_k\Big(\frac{1}{2}|\lambda||z|^2\Big)e^{-\frac{1}{4}|\lambda||z|^2}$$
with $L^{n-1}_k$ being Laguerre polynomials of type $n-1$, and $\psi^{n-1}_k(r)$, or more generally, $\psi^a_k(r)$ is the normalized Laguerre function
$$\psi^a_k(r)=\frac{\Gamma(k+1)\Gamma(a+1)}{\Gamma(k+a+1)}L^a_k\Big(\frac12 r^2\Big)e^{-\frac14r^2}.$$

Note that the Laguerre functions $\psi^a_k$ are defined even for complex $a$ with $Re(a)>-1$. In this section,
we embed $\alpha(\sigma_r)$ into an analytic family of operator-valued functions using $\psi^a_k$ instead of using Riemann-Liouville fractional integrals. More precisely, we consider the following analytic family of operators
\begin{align}\label{analytic family}
\bar{M}^a_r f(z,t)=\sum_{\lambda\in\mathbb{Z}}\left(\sum^\infty_{k=0}\psi^{n-1+a}_k(\sqrt{|\lambda|}r)f^{\lambda}\ast_{\lambda}\varphi^\lambda_k(z)\right)e^{-i\lambda t}.
\end{align}
From this expression, $\bar{M}^a_rf$ with $Re(a)>-n+1-1/3$ is well-defined in $L_2(\M)$ for a nice function $f$, since it is known from Szeg\"o \cite{Sze67} that  a uniform estimate holds in this case, that is
\begin{align*}
\sup_{k\geq0}\sup_{r>0}|\psi^{n-1+a}_k(r)|\leq C
\end{align*}
for all $Re(a)>-n+1-1/3$.

With above preparations, we formulate the main result of this section is stated as follows.

\begin{theorem}\label{thm:maximal psi map 2}
Let $n>1$ and  $a\in\mathbb{C}$ such that $-n+1<Re(a)<1$. Then for any $2n/(2n-1+Re(a))<p\leq\infty$, we have
\begin{align}\label{maximal psi map 2}
\|{\sup_{r>0}}^+\bar{M}^{a}_rf\|_p\leq C_{p,a}\|f\|_p,\;\forall f=f_L\in L_p(\M).
\end{align}
\end{theorem}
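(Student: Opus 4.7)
My plan is to mimic the analytic-interpolation scheme used to prove Theorem \ref{thm:maximal radial average mpc}, but with the Laguerre-shifted analytic family $\bar M^a_r$ replacing the Riemann-Liouville family $M^a F_x$, and exploiting the discreteness of the Laguerre spectrum $\lambda\in\mathbb Z\setminus\{0\}$ to lower the $L^2$ endpoint threshold from $Re(a)>-n+3/2$ (Proposition \ref{pro:maximal uniform average ma2}) down to the sharp $Re(a)>-n+1$.

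First I would prove an $L^p$ endpoint at large $Re(a)$: for $Re(a)\geq 1$ and $1<p\leq\infty$,
$$\|{\sup_{r>0}}^+\bar M^a_r f\|_p\leq C_p\,e^{\pi|Im(a)|}\|f\|_p.$$
Using a Laguerre fractional-integral identity expressing $\psi^{n-1+a}_k$ as a Riemann-Liouville type average of $\psi^{n-1}_k$, one rewrites $\bar M^a_r f$ as an absolutely integrable average of $\bar M^0_r f=f*\bar\sigma_r$ with total-mass bound $C\,e^{\pi|Im(a)|}$ coming from the standard $\Gamma$-function estimate. The noncommutative Dunford-Schwartz maximal inequality combined with the comparison of $\bar\sigma_r$ with $\bar\mu_r$ (Remark \ref{rem:p estimate for Mt bar}) then delivers the claimed bound exactly as in Proposition \ref{pro:maximal uniform average map}.

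Next I would prove the sharp $L^2$ endpoint: for $Re(a)>-n+1$,
$$\|{\sup_{r>0}}^+\bar M^a_r f\|_2\leq C_a\,e^{\pi|Im(a)|}\|f\|_2.$$
Plancherel in the central variable turns $\|\bar M^a_r f\|_2^2$ into a weighted sum of $|\psi^{n-1+a}_k(\sqrt{|\lambda|}r)|^2$ against $\|f^\lambda*_\lambda\varphi^\lambda_k\|_2^2$. I would then run the $g$-function argument of Lemma \ref{lem:g function} together with the integration-by-parts/Kadison-Schwarz/L\"owner-Heinz reduction of Lemma \ref{lem:maximal uniform average maz}, reducing the problem to the one-dimensional weighted Laguerre estimates
$$\sup_{k\geq0}\sup_{\lambda\in\mathbb Z\setminus\{0\}}\int_0^\infty\Big|\tfrac{d^m}{dr^m}\psi^{n-1+a}_k(\sqrt{|\lambda|}r)\Big|^2 r^{2m-1}\,dr\leq C_{m,a},\quad 1\leq m\leq n-1.$$
The substitution $s=\sqrt{|\lambda|}r$ reduces these to estimates on $\psi^{n-1+a}_k$ alone; crucially $|\lambda|\geq 1$ prevents any low-frequency loss, and the Szeg\"o asymptotics \cite{Sze67} give the bound uniformly in $k$ for every $Re(a)>-n+1$. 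The analogous continuous integration over $\lambda\in\mathbb R$ in the nonreduced case is precisely what forces the weaker threshold $-n+3/2$.

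Finally I would apply Stein's analytic interpolation as in Theorem \ref{thm:maximal radial average mpc}: given $-n+1<Re(a)<1$ and $p>2n/(2n-1+Re(a))$, choose $\theta\in(0,1)$, $q\in(1,2]$, $b\in(-n+1,Re(a))$ close to $-n+1$, and $c\geq 1$ satisfying
$$\tfrac1p=\tfrac{1-\theta}2+\tfrac\theta q,\qquad Re(a)=(1-\theta)b+\theta c,$$
and run the three-lines argument on
$$G(z)=\exp(\delta(z^2-\theta^2))\sum_r\tau\big(\bar M^{(1-z)b+zc+i\,Im(a)}_r f(z)\,g_r(z)\big),$$
using the $L^2$ endpoint on $Re(z)=0$ and the $L^p$ endpoint on $Re(z)=1$, and extracting at $z=\theta$ the desired $L_p(\M;\ell_\infty)$ bound by duality against sequences in $L_{p'}(\M;\ell_1)$. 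The main obstacle is Step 2: one must establish the weighted one-dimensional Laguerre estimates uniformly in $(k,\lambda)$ and saturating at $Re(a)=-n+1$, which requires the full strength of the discreteness of $\lambda$ together with the sharp pointwise asymptotics for $\psi^{n-1+a}_k$ in the transition zone.
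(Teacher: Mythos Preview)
Your Step 3 (Stein interpolation) and the overall architecture are correct and match the paper. Step 1 is close in spirit, though you omit an ingredient the paper actually needs: the identity \eqref{laguerre polynomial} does not express $\bar M^a_r f$ purely as an average of $f*\bar\sigma_{rs}$; an extra Poisson factor $P_{r^2(1-s^2)}$ in the central variable appears (formula \eqref{analytic family2}), and one must invoke the noncommutative maximal inequality for this Poisson semigroup in addition to Remark \ref{rem:p estimate for Mt bar}.

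The genuine gap is Step 2. Your proposed route---carry Lemma \ref{lem:g function} and Lemma \ref{lem:maximal uniform average maz} over to $\bar M^a_r$ and use $|\lambda|\ge 1$---does not reach $Re(a)>-n+1$. Two separate points:
\begin{itemize}
\item The integral you write down is scale-invariant: the substitution $s=\sqrt{|\lambda|}\,r$ that you yourself invoke gives
\[
\int_0^\infty\Big|\tfrac{d^m}{dr^m}\psi^{n-1+a}_k(\sqrt{|\lambda|}\,r)\Big|^2 r^{2m-1}\,dr
=\int_0^\infty\big|(\psi^{n-1+a}_k)^{(m)}(s)\big|^2 s^{2m-1}\,ds,
\]
which is completely independent of $\lambda$. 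So discreteness of $\lambda$ cannot enter through this estimate; whatever threshold you get this way is the same as in the nonreduced case.
\item The reduction of Lemma \ref{lem:maximal uniform average maz} relies on the identity $M^{-k}F_x(r)=r^k\tfrac{d^k}{dr^k}F_x(r)$, specific to the Riemann--Liouville family. The Laguerre family obeys no such relation (differentiating $L^{\alpha}_k$ shifts both the type and the degree), so the integration-by-parts chain from $g_m$ to a maximal bound at a prescribed value of $a$ does not transfer. Even in the Riemann--Liouville setting this chain only reaches the integer $a=-n+2$, and the subsequent fractional step in Proposition \ref{pro:maximal uniform average ma2} loses $1/2$; that is exactly why Section 5 stops at $-n+3/2$.
\end{itemize}
The paper's $L^2$ endpoint (Proposition \ref{pro:maximal psi a2 2}) uses a different mechanism. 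One takes $r\ge 1$ only, decomposes the Laguerre spectrum dyadically into $\Sigma_j=\{(\lambda,k):|\lambda|k\in[2^j,2^{j+1})\}$, and applies the one-derivative Sobolev embedding of Lemma \ref{lem:pointwise control} to dominate $\sup_{r\ge 1}|\bar M^a_rE_jf|^2$ by $L^{-1}\!\int_1^\infty|\cdot|^2+L\!\int_1^\infty|\partial_r\cdot|^2$. The integrals $\int_1^\infty$ are \emph{not} scale-invariant; after rescaling and using the pointwise Laguerre asymptotics (Lemma \ref{lem:asymptotic property}) they yield $A^2\lesssim(|\lambda|k)^{-\delta'-1/2}$ and $B^2\lesssim(|\lambda|k)^{-\delta'+1/2}$. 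Choosing $L=2^{-j/2}$ gives $2^{-\delta'j}$ decay. Discreteness enters precisely here: $\lambda\in\mathbb Z\setminus\{0\}$ forces $|\lambda|k\ge 1$, so $j\ge -1$ and the geometric sum converges. This is what buys the full range $Re(a)>-n+1$, and it is not visible from the $g$-function integrals you wrote.
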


In particular, when $a=0$, we obtain inequality (\ref{map fl}), thus finish the proof of Theorem  \ref{thm:maximal radial average reduced}. This theorem can be deduced from the same analytic interpolation argument used in the proof of Theorem \ref{thm:maximal radial average mpc} based on the following two propositions. The first one corresponds to Proposition \ref{thm:maximal uniform average ur}.

\begin{proposition}\label{pro:maximal psi ap}
Let $1<p\leq\infty$ and $n\geq1$. For $a\in\mathbb{C}$ with $Re(a)\geq1$, we have
\begin{align}\label{maximal psi ap}
\|{\sup_{r>0}}^+\bar{M}^{a}_rf\|_p\leq C_{p,a}\|f\|_p,\;\forall f=f_L\in L_p(\M).
\end{align}
\end{proposition}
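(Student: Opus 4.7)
The plan is to mimic the proof of Proposition \ref{pro:maximal uniform average map} by first establishing an integral representation of $\bar M^a_r f$ in terms of the spherical averages $f\ast\bar\sigma_s$ for $s\in(0,r)$. Starting from the classical fractional integration formula for Laguerre polynomials, valid for $Re(\mu)>0$,
$$L^{\alpha+\mu}_k(x) = \frac{\Gamma(k+\alpha+\mu+1)}{\Gamma(k+\alpha+1)\Gamma(\mu)}\,x^{-\alpha-\mu}\int_0^x(x-y)^{\mu-1}y^\alpha L^\alpha_k(y)\,dy,$$
and applying it with $\alpha=n-1$, $\mu=a$, $x=|\lambda|r^2/2$, $y=|\lambda|s^2/2$, then adjusting for the Gaussian factors to pass to the normalized Laguerre functions $\psi^\alpha_k$, one obtains after substitution into the expansion \eqref{explicite expression 2}
$$\bar M^a_r f = \frac{2\Gamma(n+a)}{\Gamma(n)\Gamma(a)\,r^{2(n-1+a)}}\int_0^r(r^2-s^2)^{a-1}s^{2n-1}\,T_{(r^2-s^2)/4}(f\ast\bar\sigma_s)\,ds,$$
where $T_\sigma$ is the Fourier multiplier $e^{-|\lambda|\sigma}$ in the central variable $t\in\mathbb T$, i.e.\ the Poisson-type semigroup on $\bar H^n$. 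Each $T_\sigma$ is a positive self-adjoint contraction on every $L_p(\M)$, and $(T_\sigma)_{\sigma\geq 0}$ satisfies the noncommutative Dunford-Schwartz maximal inequality (Lemma \ref{lem:p estimate for Mt}).

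For $Re(a)\geq 1$, the standard estimate $|\Gamma(n+a)/\Gamma(a)|\leq C_n e^{\pi|Im(a)|}\Gamma(n+Re(a))/\Gamma(Re(a))$, together with the pointwise bounds $(r^2-s^2)^{Re(a)-1}\leq r^{2Re(a)-2}$ and $s^{2n-1}\leq r^{2n-1}$ on $[0,r]$, reduces the kernel to one dominated up to multiplicative constants by the uniform density $\frac{1}{r}\mathbf{1}_{[0,r]}(s)\,ds$. Following the duality argument in Proposition \ref{pro:maximal uniform average map}, we test against positive sequences $(y_r)_r\subset L^+_{p'}(\M)$ with $\|\sum_r y_r\|_{p'}\leq 1$; the self-adjointness of $T_\sigma$ transfers the semigroup from $f\ast\bar\sigma_s$ onto $y_r$, producing a dominating element $\tilde y_r$ for $(T_\sigma y_r)_{\sigma\geq 0}$ via the Dunford-Schwartz maximal inequality. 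The weighted integral $\frac{2}{r}\int_0^r(\cdot)\,ds$ then converts to the uniform spherical average $f\ast\bar\mu_r$, and the maximal inequality for $\bar\mu_r$ on $\bar H^n$ from Remark \ref{rem:p estimate for Mt bar} concludes.

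The main obstacle is the extraneous Poisson-type factor $T_{(r^2-s^2)/4}$ in the integral representation; it is absent in the non-reduced Heisenberg case, where the continuous central spectrum permits a pure Riemann-Liouville formulation. The factor couples the radial parameter $r$ of the maximal function with the semigroup parameter, so that a simultaneous application of the Dunford-Schwartz maximal inequality for $(T_\sigma)$ and the spherical maximal inequality for $\bar\mu_r$ is required. Arranging the duality so that the composed dominator has its $L_{p'}(\ell_1)$-norm controlled linearly in $(y_r)$---using positivity and the sublinearity of the semigroup maximal, together with the decomposition of the unit ball of $L_{p'}(\ell_1)$ into positive elements (Proposition 2.1 of \cite{JuXu06})---is the principal technical subtlety.
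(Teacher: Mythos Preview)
Your integral representation is correct and coincides (up to a harmless constant and the change of variable $s\mapsto rs$) with the paper's formula \eqref{analytic family2} specialized to $b=0$. The difficulty you single out is also the right one, but the route you propose for handling the Poisson factor does not close.

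Transferring $T_\sigma$ to the dual side by self-adjointness and then producing, for each $r$, a dominator $\tilde y_r\geq T_\sigma y_r$ leaves you needing
\[
\Big\|\sum_r\tilde y_r\Big\|_{p'}\leq C\Big\|\sum_r y_r\Big\|_{p'}.
\]
The assignment $y\mapsto\tilde y$ supplied by the noncommutative Dunford--Schwartz theorem is not linear, and there is no linear choice available since no fixed finite measure dominates the entire Poisson family $\{p_u\}_{u>0}$. What you are asking for is precisely the $\ell_1$-valued maximal inequality $\|\sum_r My_r\|_{p'}\leq C\|\sum_r y_r\|_{p'}$, and this already fails in the commutative setting on $\mathbb T$ for positive $y_r$ (take $y_r=\mathbf 1_{[r,r+1]}$, $1\leq r\leq N$, and observe that $\sum_r My_r\gtrsim\log N$ on an interval of length $N$). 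Sublinearity of the maximal operator gives the inequality in the wrong direction, and the positive decomposition of the unit ball of $L_{p'}(\ell_1)$ from \cite{JuXu06} cannot repair this.

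The paper sidesteps the issue by applying the Poisson maximal inequality on the $L_p$ side, to the single element $f$, rather than on the $L_{p'}$ side to the family $(y_r)_r$. Since $P_u$ commutes with the spherical convolution (this is your identity $T_\sigma(f\ast\bar\sigma_s)=(T_\sigma f)\ast\bar\sigma_s$), one first produces a single $F\in L_p^+(\M)$ with $P_{r^2(1-s^2)}f\leq F$ for all $r,s$ and $\|F\|_p\leq C_p\|f\|_p$ (Lemma~\ref{lem:p estimate for Mt} applied to the $t$-variable). Positivity of $\bar\sigma_s$ then yields the operator inequality $(P_{r^2(1-s^2)}f)\ast\bar\sigma_{rs}\leq F\ast\bar\sigma_{rs}$, after which the remaining $s$-integral is dominated by $F\ast\bar\mu_r$ exactly as in your kernel estimate, and Remark~\ref{rem:p estimate for Mt bar} finishes. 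Only one dominator is ever needed, so no $\ell_1$-valued estimate enters.
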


The second one corresponds to Proposition \ref{pro:maximal uniform average ma2}. It is this proposition that provides the optimal scale of $p$ in Theorem \ref{thm:maximal psi map 2}.
\begin{proposition}\label{pro:maximal psi a2 2}
Let $n>1$ be the dimension of the reduced Heisenberg group and $a\in\mathbb{C}$. The following inequality holds
\begin{align}\label{maximal psi a2 2}
\|{\sup_{r\geq1}}^+\bar{M}^{a}_rf\|_2\leq C_{a}\|f\|_2,\;\forall f=f_L\in L_2(\M).
\end{align}
provided $Re(a)>-n+1$.
\end{proposition}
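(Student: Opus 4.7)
The approach parallels the $L_2$ part of the proof of Proposition \ref{pro:maximal uniform average ma2}, replacing the Riemann--Liouville embedding with the Laguerre analytic family $\bar M_r^a$ from (\ref{analytic family}) whose symbols are the scalar Laguerre functions $\psi_k^{n-1+a}(\sqrt{|\lambda|}r)$. The geometric input that unlocks the sharp threshold $Re(a)>-n+1$ is the discreteness of the Laguerre spectrum: $\lambda\in\mathbb{Z}\setminus\{0\}$ forces $\sqrt{|\lambda|}\geq 1$, so together with $r\geq 1$ the argument $\sqrt{|\lambda|}r$ always lies in the asymptotic region $[1,\infty)$ of $\psi_k^{n-1+a}$.

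First, using the expansion (\ref{analytic family}) and the Plancherel formula on $\bar H^n$ applied tensorially --- the maps $P_{k,\lambda}:f\mapsto(f^\lambda\ast_\lambda\varphi_k^\lambda)e^{-i\lambda t}$ are mutually orthogonal projections in $L_2(\M)$ whose sum reproduces $f_L$ --- one gets
$$\|\bar M_r^a f\|_2^2 \;=\; \sum_{\lambda\in\mathbb Z\setminus\{0\}}\sum_{k\geq 0}|\psi_k^{n-1+a}(\sqrt{|\lambda|}r)|^2\, c_{k,\lambda}\,\|P_{k,\lambda}f\|_2^2$$
and the analogous identity with $\psi_k^{n-1+a}$ replaced by its $r$-derivative. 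This converts operator-valued $L_2$-estimates into uniform scalar estimates on $\psi_k^{n-1+a}(\sqrt{|\lambda|}r)$ in $(k,\lambda,r)$.

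Next, fix $\epsilon\in(0,1)$ and introduce the noncommutative $g$-function
$$G_\epsilon(f) \;:=\; \Big(\int_1^\infty r^{1+\epsilon}\,\Big|\frac{\partial}{\partial r}\bar M_r^a f\Big|^2\,dr\Big)^{1/2}.$$
By the Plancherel identity above, $\|G_\epsilon(f)\|_2\leq C_{a,\epsilon}\|f\|_2$ is equivalent to the uniform scalar bound
$$\sup_{k\geq 0,\,|\lambda|\geq 1}\int_1^\infty r^{1+\epsilon}\big|\partial_r\psi_k^{n-1+a}(\sqrt{|\lambda|}r)\big|^2\,dr \;<\;\infty,$$
and the substitution $u=\sqrt{|\lambda|}r$ reduces this to a uniform-in-$k$ weighted tail estimate for $(\psi_k^{n-1+a})'(u)$ on $[\sqrt{|\lambda|},\infty)\subset[1,\infty)$, supplied by classical Szeg\"o/Muckenhoupt--Stein asymptotics for Laguerre functions, with integrability threshold precisely at $Re(a)=-n+1$. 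For $r\geq 1$, the fundamental theorem of calculus and the operator Cauchy--Schwarz
$$\Big|\int_1^r\partial_s\bar M_s^a f\,ds\Big|^2 \;\leq\; \Big(\int_1^\infty s^{-1-\epsilon}ds\Big)\int_1^\infty s^{1+\epsilon}|\partial_s\bar M_s^a f|^2\,ds \;\leq\; C_\epsilon\,G_\epsilon(f)^2$$
give $|\bar M_r^a f|^2 \leq 2|\bar M_1^a f|^2+2C_\epsilon G_\epsilon(f)^2$ uniformly in $r\geq 1$, hence
$$\Big\|{\sup_{r\geq 1}}^+|\bar M_r^a f|^2\Big\|_1 \;\leq\; 2\|\bar M_1^a f\|_2^2+2C_\epsilon\|G_\epsilon(f)\|_2^2 \;\leq\; C_a\|f\|_2^2,$$
which is the desired $L_2(\M;\ell^c_\infty)$-bound. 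A symmetric argument applied to $f^*$ (with $a$ replaced by $\bar a$) gives the $L_2(\M;\ell^r_\infty)$-bound, and Lemma~\ref{lem:l8c+l8r} interpolates these to the full $L_2(\M;\ell_\infty)$-estimate. The endpoint contribution $\|\bar M_1^a f\|_2\leq C_a\|f\|_2$ is handled by Plancherel together with the trivial bound $\sup_{k,\,|\lambda|\geq 1}|\psi_k^{n-1+a}(\sqrt{|\lambda|})|<\infty$.

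The main obstacle is the sharp scalar Laguerre estimate driving the $g$-function bound: one must control $(\psi_k^{n-1+a})'$ uniformly in $k$ across the oscillatory, Airy-transition, and exponentially-decaying regions with weight $u^{1+\epsilon}$, and prove that uniform integrability begins exactly at $Re(a)=-n+1$. The restriction $|\lambda|\geq 1$ is what removes the additional loss coming from the small-$\lambda$ continuum present in $H^n$, and is precisely what distinguishes the optimal range $2n/(2n-1)<p$ of Theorem~\ref{thm:maximal radial average reduced} from the non-sharp $(2n-1)/(2n-2)<p$ reached in Theorem~\ref{thm:maximal radial average}.
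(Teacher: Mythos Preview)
Your overall architecture is reasonable and close in spirit to the paper, but there is a genuine gap at the decisive step: the claimed uniform scalar $g$-function bound
\[
\sup_{k\geq 0,\,|\lambda|\geq 1}\int_1^\infty r^{1+\epsilon}\big|\partial_r\psi_k^{n-1+a}(\sqrt{|\lambda|}r)\big|^2\,dr<\infty
\]
is \emph{false} in the full range $Re(a)>-n+1$. Writing $\alpha=n-1+Re(a)$, after your substitution $u=\sqrt{|\lambda|}\,r$ and taking $|\lambda|=1$, the Airy transition region $u^2/2\in[k/2,3k/2]$ contributes (via Lemma~\ref{lem:asymptotic property}) a term of order $k^{1+\epsilon/2-2\alpha}$; the same order arises from the oscillatory region. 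Hence the supremum over $k$ is finite only for $\alpha>1/2+\epsilon/4$, i.e.\ your argument reaches $Re(a)>-n+3/2$ and no further. This is exactly the threshold already obtained in Proposition~\ref{pro:maximal uniform average ma2}; it does \emph{not} give the sharp endpoint $Re(a)>-n+1$ that Proposition~\ref{pro:maximal psi a2 2} asserts and that is needed for the optimal $p$-range in Theorem~\ref{thm:maximal radial average reduced}. The observation $|\lambda|\geq 1$ alone is not what buys the improvement: the relevant spectral parameter is $|\lambda|k$, and for fixed $|\lambda|=1$ there is no lower bound on it as $k\to\infty$.

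The paper closes this gap by a different mechanism. It decomposes the Laguerre spectrum dyadically in $|\lambda|k$, setting $\Sigma_j=\{(\lambda,k):|\lambda|k\in[2^j,2^{j+1})\}$, and on each block applies the Sobolev-type pointwise inequality of Lemma~\ref{lem:pointwise control},
\[
|F(r)|^2\leq 2L^{-1}\!\int_I|F|^2+2L\!\int_I|F'|^2,
\]
with $L=2^{-j/2}$ chosen adaptively per block. The two scalar integrals satisfy $A^2\lesssim(|\lambda|k)^{-\delta'-1/2}$ and $B^2\lesssim(|\lambda|k)^{-\delta'+1/2}$ (estimates~\eqref{fundamental estimate 1}--\eqref{fundamental estimate 2}), so this choice of $L$ balances them to $2^{-j\delta'}$, which is summable precisely for $\delta'>0$, i.e.\ $Re(a)>-n+1$. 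Your single fixed-weight $g$-function corresponds to taking a single $L$ (effectively $L\to 0$ with weight compensation) and thus cannot realize this geometric-mean gain. To repair your proof, replace the global $G_\epsilon$ by the block-wise estimate with the scale $L$ depending on $|\lambda|k$, as the paper does.
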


\subsection{Proof of Proposition  \ref{pro:maximal psi ap}}

In order to prove Proposition \ref{pro:maximal psi ap}, we rewrite $\bar{M}^a_rf$ for nice band-limited function $f$ in another form. For this, we require the following formula relating Laguerre polynomials of different type:
\begin{align}\label{laguerre polynomial}
L^a_k(r)=\frac{\Gamma(k+a+1)}{\Gamma(a-b)\Gamma(k+b+1)}\int^1_0s^b(1-s)^{a-b-1}L^b_k(rs)ds
\end{align}
holds true for $Re(a-b)>0$ and $Re(b)>-1$.
Defining
\begin{align*}
(P_rf)^\lambda(z)=e^{-\frac14|\lambda|r}f^\lambda(z)
\end{align*}
to be the Poisson integrals of $f$ in the $t$-variable. Then an easy calculation yields that
$$(\bar{M}^a_rf)^\lambda(z)=L(a,b)\int^1_0s^{2n+2b-1}(1-s^2)^{a-b-1}e^{-\frac14|\lambda|(r^2(1-s^2))}(\bar{M}^b_{rs}f)^\lambda(z)ds$$
where
$$L(a,b)=\frac{\Gamma(a+n)}{\Gamma(a-b)\Gamma(b+n)},$$
which in turn deduces that
\begin{align}\label{analytic family2}
\bar{M}^a_rf(z,t)=L(a,b)\int^1_0s^{2n+2b-1}(1-s^2)^{a-b-1}(P_{r^2(1-s^2)}\bar{M}^b_{rs}f)(z,t)ds.
\end{align}

Now, we are at a position to prove Proposition \ref{pro:maximal psi ap}.
\begin{proof}
Without loss of generality, we assume $f=f_L\in L^+_p(\M)$.
Given $a\in\mathbb{C}$ with $Re(a)\geq1$, we rewrite it as $a=1+i\gamma+\delta$ with $\delta\geq0$. Since for any $r_1,r_2>0,$ $Re(b)>-n+1-1/3$,
\begin{align}\label{commute}
P_{r_1}\bar{M}^b_{r_2}f=\bar{M}^b_{r_2}P_{r_1}f,
\end{align}
by (\ref{analytic family2}), $\bar{M}^{1+i\gamma+\delta}_rf(z,t)$ can be expressed as
\begin{align*}
L(1+i\gamma+\delta,0)\int^1_0s^{2n-1}(1-s^2)^{i\gamma+\delta}(P_{r^2(1-s^2)}f)\ast\bar{\sigma}_{rs}(z,t)ds.
\end{align*}
Now applying a duality argument used in the proof of Proposition \ref{pro:maximal uniform average map}, we obtain that the $L_p(\M;\ell_\infty)$-norm of $\bar{M}^{1+i\gamma+\delta}_rf$ is controlled modulo a constant depending on $a$ by the
$L_p(\M;\ell_\infty)$-norm of
\begin{align*}
L(1+\delta,0)\int^1_0s^{2n-1}(1-s^2)^{\delta}(P_{r^2(1-s^2)}f)\ast\bar{\sigma}_{rs}(z,t)ds.
\end{align*}
Using Lemma \ref{lem:p estimate for Mt} for the Poisson semigroup in the $t$-variable on $\M$, we can find $F\in L^+_p(\M)$ such that
$$P_{r^2(1-s^2)}f\leq F,\;\forall r,s\;\mathrm{and}\;\|F\|_p\leq C_p\|f\|_p.$$
Hence the $L_p(\M;\ell_\infty)$-norm of $\bar{M}^{1+i\gamma+\delta}_rf$ is controlled modulo a constant by the
$L_p(\M;\ell_\infty)$-norm of
\begin{align*}
&L(1+\delta,0)\int^1_0s^{2n-1}(1-s^2)^{\delta}F\ast\bar{\sigma}_{rs}(z,t)ds\\
&\leq L(1+\delta,0)\frac{1}{r^{2n}}\int^r_0s^{2n-1}F\ast\bar{\sigma}_{s}(z,t)ds\\
&\leq L(1+\delta,0)\frac{1}{r}\int^r_0F\ast\bar{\sigma}_{s}(z,t)ds,
\end{align*}
which, by Remark \ref{rem:p estimate for Mt bar}, is  smaller than $L_p(\M)$-norm of $F$, whence $L_p(\M)$-norm of $f$ modulo a constant. Thus we finish the proof.
\end{proof}

\subsection{Proof of Proposition \ref{pro:maximal psi a2 2}}

The proof of Proposition \ref{pro:maximal psi a2 2} is much more involved. The following argument is an improvement of the one in the commutative case, i.e. Proposition 6.2 of \cite{NeTh97}, where they only give a proof in the real number case which apparently is not enough to apply the method of analytic interpolation. The author would like to thank Sundaram Thangavelu for helpful discussion to make a remedy. On the other hand, we need the following noncommutative version of Lemma 1 in Page 499 of \cite{Ste93}, which may be useful somewhere else.

\begin{lemma}\label{lem:pointwise control}
Suppose $F$ is a $S_\mathcal{N}$-valued function on $\mathbb{R}$, which smooth for $t$ in an interval $I$. Then for each $\ell$ with $\ell\leq|I|$, we have
\begin{align}\label{pointwise control}
|F(t)|^2\leq 2\ell^{-1}\int_I|F(s)|^2ds+2\ell\int_I|F'(s)|^2ds
\end{align}
for each $t\in I$.
\end{lemma}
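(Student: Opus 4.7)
The plan is to adapt the standard scalar proof of this Sobolev-type averaging inequality to the operator-valued setting, which amounts to checking that the two scalar tools one uses—the elementary $(A+B)^*(A+B)\le 2A^*A+2B^*B$ and the Cauchy--Schwarz bound for an integral—carry over to operator inequalities. Fix $t\in I$ and pick a subinterval $J\subset I$ of length $\ell$ containing $t$.

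The first step is to write the fundamental theorem of calculus in the form $F(t)=F(s)+\int_s^t F'(u)\,du$ for any $s\in J$, and apply the operator inequality $(A+B)^*(A+B)\le 2A^*A+2B^*B$ to obtain
\begin{equation*}
|F(t)|^2 \le 2|F(s)|^2 + 2\Bigl|\int_s^t F'(u)\,du\Bigr|^2.
\end{equation*}
The second step is the operator Cauchy--Schwarz estimate for the tail integral: for any vector $\xi$ in the Hilbert space on which $\mathcal{N}$ acts, $\|\int_s^t F'(u)\xi\,du\|^2\le |t-s|\int_s^t\|F'(u)\xi\|^2\,du$ by the usual scalar Cauchy--Schwarz, which translates into the operator inequality
\begin{equation*}
\Bigl|\int_s^t F'(u)\,du\Bigr|^2 \le |t-s|\int_s^t |F'(u)|^2\,du \le \ell\int_I|F'(u)|^2\,du,
\end{equation*}
since $|t-s|\le\ell$ for $s\in J$.

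Combining the two gives $|F(t)|^2\le 2|F(s)|^2+2\ell\int_I|F'(u)|^2\,du$ for every $s\in J$. The final step is to integrate this operator inequality in $s$ over $J$, using the elementary fact that integration of operator-valued functions preserves operator inequalities (tested against any vector, it reduces to the scalar monotonicity of the integral). Since the left-hand side and the gradient term are constant in $s$, integration over $J$ produces a factor $\ell$ on each, and dividing through by $\ell$ yields
\begin{equation*}
|F(t)|^2\le 2\ell^{-1}\int_J|F(s)|^2\,ds+2\ell\int_I|F'(u)|^2\,du,
\end{equation*}
and enlarging the first integral from $J$ to $I$ gives exactly \eqref{pointwise control}.

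No real obstacle is expected here; the only subtlety is making sure the operator Cauchy--Schwarz bound and the monotonicity of integration in the sense of operator inequalities are used correctly, but both are standard once one passes to matrix elements against a fixed vector $\xi$ and invokes the scalar versions.
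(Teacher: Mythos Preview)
Your proof is correct and follows essentially the same route as the paper's: pick a subinterval of length $\ell$ containing $t$, use the fundamental theorem of calculus together with the elementary bound $|A+B|^2\le 2|A|^2+2|B|^2$ and operator Cauchy--Schwarz (what the paper calls Kadison--Schwarz), and average over the subinterval. The only cosmetic difference is ordering: you apply the pointwise inequality first and then average in $s$, whereas the paper averages first and then applies Kadison--Schwarz to the resulting decomposition $F(t)=\frac{1}{\ell}\int_J F(s)\,ds+\frac{1}{\ell}\int_J\int_s^t F'(u)\,du\,ds$; both yield the same bound.
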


\begin{proof}
Each $t\in I$ belongs to an interval $I_0\subset I$ with $|I_0|=\ell$. Observe that
$$F(t)-F(s)=\int^t_sF'(u)du$$
which implies
$$F(t)-\frac{1}{|I_0|}\int_{I_0}F(s)ds=\frac{1}{|I_0|}\int_{I_0}(F(t)-F(s))ds=\frac{1}{|I_0|}\int_{I_0}\int^t_sF'(u)duds.$$
Hence by Kadison-Schwarz's inequalities
\begin{align*}
|F(t)|^2&\leq 2\left|\frac{1}{|I_0|}\int_{I_0}F(s)ds\right|^2+2\left|\frac{1}{|I_0|}\int_{I_0}\int^t_sF'(u)duds\right|^2\\
&\leq2\frac{1}{|I_0|}\int_{I_0}|F(s)|^2ds+2\frac{1}{|I_0|}\int_{I_0}\left|\int^t_sF'(u)du\right|^2ds\\
&\leq2\frac{1}{|I_0|}\int_{I_0}|F(s)|^2ds+2\frac{1}{|I_0|}\int_{I_0}|t-s|\int^t_s|F'(u)|^2duds\\
&\leq2\ell^{-1}\int_{I}|F(s)|^2ds+2\ell\int_{I}|F'(u)|^2du.
\end{align*}
\end{proof}

Now let us give the proof of Proposition \ref{pro:maximal psi a2 2}.

\begin{proof}
We decompose $\Sigma_L$  as follows. For $j=0,1,2,\dotsm$, define
$$\Sigma_j=\{(\lambda,k):\;|\lambda|k\in[2^j,2^{j+1})\}.$$
Let $\Sigma_{-1}=\{(\lambda,0):\;\lambda\neq0\}$. Then we have $\Sigma_L=\cup_{j\geq-1}\Sigma_j$.

Let $f=f_L$ be a nice function. Write $a=-n+1+\delta+i\gamma$. Let $E_j$ be the projection operator corresponding to the set $\Sigma_j$. By triangle inequality, it suffice to show that
\begin{align*}
\|{\sup_{r\geq1}}^+\bar{M}^{a}_rE_jf\|_2\leq C_{a,\delta'} 2^{-\delta' j/2}\|f\|_2
\end{align*}
for some $0<\delta'<\delta$. 
By Lemma \ref{lem:l8c+l8r}, we only need to prove
\begin{align*}
\|{\sup_{r\geq1}}^+|\bar{M}^{a}_rE_jf|^2\|^{1/2}_1\leq C_{a,\delta'} 2^{-\delta' j/2}\|f\|_2.
\end{align*}
Now applying Lemma \ref{lem:pointwise control}, for every $L>0$, $r\geq1$, we have
\begin{align*}
|\bar{M}^{a}_rE_jf|^2\leq 2L^{-1}\int^\infty_1|\bar{M}^{a}_rE_jf|^2dr+2L\int_1^\infty\Big|\frac{d}{dr}\bar{M}^{a}_rE_jf\Big|^2dr.
\end{align*}
Therefore, it suffices to prove that for $(\lambda,k)\in\Sigma_j$,
$$L^{-1}\int^\infty_1|\psi^{\delta+i\gamma}_k(\sqrt{|\lambda|}r)|^2dr+L\int_1^\infty\Big|\frac{d}{dr}(\psi^{{\delta+i\gamma}}_k(\sqrt{|\lambda|}r))\Big|^2dr\leq C_{a,\delta'}2^{-\delta' j}.$$
Note that when $j=-1$, $\psi^a_0(\sqrt{|\lambda|}r)=e^{-\lambda r^2/4}$. We trivially have the desired estimate. When $j\geq0$, by the formula (\ref{laguerre polynomial}), an easy  calculation yields
$$\psi^{\delta+i\gamma}_k(\sqrt{|\lambda|}r)=C(a,\delta')\int^1_0s^{\delta'}(1-s)^{\delta-\delta'+i\gamma-1}e^{\frac14|\lambda|r^2(s-1)}\psi^{\delta'}_k(\sqrt{|\lambda|s}r)ds$$
where
$$C(a,\delta')=\frac{\Gamma(\delta+i\gamma+1)}{\Gamma(\delta-\delta'+i\gamma)\Gamma(\delta'+1)}.$$
Whence $\frac{d}{dr}\psi^{\delta+i\gamma}_k(\sqrt{|\lambda|}r)$ equals
\begin{align*}
&C(a,\delta')\int^1_0s^{\delta'}(1-s)^{\delta-\delta'+i\gamma-1}(\frac{1}{2}|\lambda|r(s-1))e^{\frac14|\lambda|r^2(s-1)}\psi^{\delta'}_k(\sqrt{|\lambda|s}r)ds\\
&+\;\;\;C(a,\delta')\int^1_0s^{\delta'}(1-s)^{\delta-\delta'+i\gamma-1}e^{\frac14|\lambda|r^2(s-1)}\frac{d}{dr}(\psi^{\delta'}_k(\sqrt{|\lambda|s}r))ds.
\end{align*}
For complex number $b$, denote
$$A(b,k,\lambda)=\left(\int^\infty_1|\psi^{b}_k(\sqrt{|\lambda|}r)|^2dr\right)^{1/2}$$
and
$$B(b,k,\lambda)=\left(\int_1^\infty\Big|\frac{d}{dr}(\psi^{{b}}_k(\sqrt{|\lambda|}r))\Big|^2dr\right)^{1/2}.$$
We claim that the following two estimates hold for any $\eta>0$, which will be shown later:
\begin{align}\label{fundamental estimate 1}
A^2(\delta',k,\eta)\leq C(\eta k)^{-\delta'-1/2}
\end{align}
and
\begin{align}\label{fundamental estimate 2}
B^2(\delta',k,\eta)\leq C(\eta k)^{-\delta'+1/2}.
\end{align}
Therefore, by Minkowski inequality, we have
\begin{align*}
A(\delta+i\gamma,k,\lambda)&\leq C_{a,\delta'}\int^1_0s^{\delta'}(1-s)^{\delta-\delta'-1}A(\delta',k,\lambda s)ds\\
&\leq C_{a,\delta'}\int^1_0s^{\delta'}(1-s)^{\delta-\delta'-1}(|\lambda|s k)^{-\delta'/2-1/4}ds\\
&=C_{a,\delta'}(|\lambda|k)^{-\delta'/2-1/4}\int^1_0s^{\delta'/2-1/4}(1-s)^{\delta-\delta'-1}ds\\
&\leq C_{a,\delta'}(|\lambda|k)^{-\delta'/2-1/4}.
\end{align*}
Together with the fact $ue^{-u}\leq C$ for any $u>0$, Minkowski inequality implies that
\begin{align*}
B(\delta+i\gamma,k,\lambda)&\leq C_{a,\delta'}\int^1_0s^{\delta'}(1-s)^{\delta-\delta'-1}A(\delta',k,\lambda s)ds\\
&\;\;\;\;\;\;\;\;\;+C_{a,\delta'}\int^1_0s^{\delta'}(1-s)^{\delta-\delta'-1}B(\delta',k,\lambda s)ds\\
&\leq C(a,\delta')\int^1_0s^{\delta'}(1-s)^{\delta-\delta'-1}(|\lambda|s k)^{-\delta'/2-1/4}ds\\
&\;\;\;\;\;\;\;\;\;+C_{a,\delta'}\int^1_0s^{\delta'}(1-s)^{\delta-\delta'-1}(|\lambda|s k)^{-\delta'/2+1/4}ds\\
&\leq C_{a,\delta'}(|\lambda|k)^{-\delta'/2-1/4}+C_{a,\delta'}(|\lambda|k)^{-\delta'/2+1/4}\\
&\leq C_{a,\delta'}(|\lambda|k)^{-\delta'/2+1/4}.
\end{align*}
Finally, we obtain that
\begin{align*}
&\|{\sup_{r\geq1}}^+|\bar{M}^{a}_rE_jf|^2\|_1\\
&\leq C\sum_{(\lambda,k)\in\Sigma_j}(L^{-1}A^2(\delta+i\gamma,k,\lambda)+LB^2(\delta+i\gamma,k,\lambda))\langle e_{(\lambda,k)}(f),f\rangle\\
&\leq C_{a,\delta'}\sup_{(\lambda,k)\in\Sigma_j}((|\lambda|k)^{-\delta'-1/2}L^{-1}+(|\lambda|k)^{-\delta'+1/2}L)\|f\|^2_2\\
&\leq C_{a,\delta'}2^{-j\delta'}\|f\|^2_2,
\end{align*}
by noting the fact that on $\Sigma_j$, $|\lambda k|\thicksim 2^j$ and choosing $L=2^{-j/2}$.

We finish the proof by establishing the two estimates (\ref{fundamental estimate 1}) and (\ref{fundamental estimate 2}). Defining
$$\mathcal{L}^{\delta'}_k(r)=\left(\frac{\Gamma(k+1)}{\Gamma(k+\delta'+1)}\right)^{1/2}e^{-\frac r2}r^{\frac {\delta'}2}L^{\delta'}_k(r).$$
Then we can write
$$\psi^{\delta'}_k(r)=\left(\frac{\Gamma(k+1)}{\Gamma(k+\delta'+1)}\right)^{1/2}\Gamma(\delta'+1)\Big(\frac{r^2}{2}\Big)^{-\frac{\delta'}{2}}\mathcal{L}^{\delta'}_k(\frac{r^2}{2}),$$
and noticing the fact $-\frac{d}{dr}L^{\delta'}_k(r)=-L^{\delta'+1}_{k-1}(r)$,
\begin{align*}
\frac{d}{dr}\psi^{\delta'}_k(r)&=\frac{\Gamma(k+1)\Gamma(\delta'+1)}{\Gamma(k+\delta'+1)}e^{-\frac14r^2}r\frac{d}{dr}L^{\delta'}_k(\frac{r^2}{2})\\
&\;\;\;\;\;+\frac{\Gamma(k+1)\Gamma(\delta'+1)}{\Gamma(k+\delta'+1)}(-\frac{r}{2})e^{-\frac14r^2}L^{\delta'}_k(\frac{r^2}{2})\\
&=-\frac{\Gamma(k+1)\Gamma(\delta'+1)}{\Gamma(k+\delta'+1)}\left(\frac{\Gamma(k+\delta')}{\Gamma(k)}\right)^{1/2}r\Big(\frac{r^2}{2}\Big)^{-\frac{\delta'+1}{2}}\mathcal{L}^{\delta'+1}_{k-1}(\frac{r^2}{2})\\
&\;\;\;\;\;+\left(\frac{\Gamma(k+1)}{\Gamma(k+\delta'+1)}\right)^{1/2}\Gamma(\delta'+1)(-\frac{r}{2})\Big(\frac{r^2}{2}\Big)^{-\frac{\delta'}{2}}\mathcal{L}^{\delta'}_k(\frac{r^2}{2})
\end{align*}
Noting that
$$\frac{\Gamma(k+1)}{\Gamma(k+\delta'+1)}\thicksim \Gamma(\delta'+1)k^{-\delta'}\;\mathrm{and}\;\frac{d}{dr}.$$ Hence
\begin{align*}
A^2(\delta',k,\eta)&=\eta^{-1/2}\int^\infty_{\sqrt{\eta}}|\psi^{\delta'}_k(r)|^2dr\\
&\leq C\Gamma^2(\delta'+1)\eta^{-1/2}k^{-\delta'}\int^\infty_{\sqrt{\eta}}\Big|\mathcal{L}^{\delta'}_k(\frac{r^2}{2})\Big|^2\Big(\frac{r^2}{2}\Big)^{{-\delta'}}dr\\
&\leq C_{\delta'}\eta^{-1/2}k^{-\delta'}\int^\infty_{{\eta}/2}|\mathcal{L}^{\delta'}_k(r)|^2r^{{-\delta'-1/2}}dr:=C_{\delta'}\cdot A.
\end{align*}
While
\begin{align*}
B^2(\delta',k,\eta)&=\eta^{1/2}\int^\infty_{\sqrt{\eta}}|\frac{d}{dr}\psi^{\delta'}_k(r)|^2dr\\
&\leq C_{\delta'}\eta^{1/2}k^{-\delta'}\int^\infty_{{\eta}/2}|\mathcal{L}^{\delta'+1}_{k-1}(r)|^2r^{{-\delta'-1/2}}dr\\
&\;\;\;\;\;+C_{\delta'}\eta^{1/2}k^{-\delta'}\int^\infty_{{\eta}/2}|\mathcal{L}^{\delta'}_k(r)|^2r^{{-\delta'+1/2}}dr:=C_{\delta'}\cdot B.
\end{align*}

To proceed with the proof, We need the following asymptotic property of Laguerre function which have been collected in Lemma 1.5.3 of \cite{Tha93}.
\begin{lemma}\label{lem:asymptotic property}
Let $k\geq1$. The Laguerre function satisfy
\begin{align*}
|\mathcal{L}^\delta_k(r)|\leq\left\{\begin{array}{cc}c(kr)^{\delta/2}& \;\mathrm{if}\; 0\leq t\leq 1/k\\
       c(kt)^{-1/4}& \;\mathrm{if}\; 1/k\leq t\leq k/2\\
       ck^{-1/4}(1-|k-t|)^{-1/4}& \;\mathrm{if}\; k/2\leq t\leq 3k/2\\
       ce^{-\gamma t}& \;\mathrm{if}\; t\geq 3k/2\end{array}\right.
\end{align*}

\end{lemma}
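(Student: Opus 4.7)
The plan is to exploit the second-order ODE satisfied by the Laguerre functions and carry out a classical Liouville--Green (WKB) analysis in four regimes, separated by the turning point of the reduced Schr\"odinger form. Writing $\phi(t)=e^{-t/2}t^{\delta/2}L^\delta_k(t)$ and conjugating away the first-order term of the Laguerre equation $tL''+(\delta+1-t)L'+kL=0$, the function $u(t)=\sqrt{t}\,\phi(t)$ satisfies $u''+Q\,u=0$ with
\[
Q(t)=\frac{2k+\delta+1}{2t}-\frac14-\frac{\delta^2-1}{4t^2}.
\]
The positive zero $t_0$ of $Q$ sits near $t_0\sim 2k$, which, after the rescaling implicit in the normalization of $\mathcal{L}^\delta_k$ used in the paper, matches the threshold $t\sim k$ appearing in the four cases of the lemma. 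Throughout, I would keep track of the Stirling estimate $\Gamma(k+1)/\Gamma(k+\delta+1)\sim k^{-\delta}$, contributing a factor $k^{-\delta/2}$ to $\mathcal L^\delta_k$.

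For $0\le t\le 1/k$ I would not use WKB at all but the power series at the origin: since $L^\delta_k(0)=\binom{k+\delta}{\delta}\sim k^\delta/\Gamma(\delta+1)$, the prefactor $e^{-t/2}t^{\delta/2}$ together with the normalization immediately yields $|\mathcal L^\delta_k(t)|\lesssim (kt)^{\delta/2}$. For $1/k\le t\le k/2$ the potential $Q$ is bounded below by a constant times $k/t$, and the standard Liouville--Green approximation gives $u(t)\approx Q^{-1/4}\cos\bigl(\int\sqrt{Q}\,ds+\varphi\bigr)$ with an error controlled by $\int Q^{-1/4}|(Q^{-1/4})''|\,ds$, which is $O(1/k)$ on this regime. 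Extracting $\sqrt{t}$ from $u=\sqrt{t}\,\phi$ and folding in the $k^{-\delta/2}$ normalization, the amplitude collapses to $(kt)^{-1/4}$. For $t\ge 3k/2$ the same WKB construction with $Q<0$ and $|Q|$ bounded below by a positive constant yields an exponentially decaying envelope $\exp\bigl(-\int\sqrt{|Q|}\,ds\bigr)$; since $\sqrt{|Q(t)|}\to 1/2$ at infinity, this gives the exponential bound $e^{-\gamma t}$ for some $\gamma>0$.

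The main obstacle is the turning-point zone $k/2\le t\le 3k/2$, where WKB breaks down. Here I would invoke the Langer uniform approximation: change variables via a $\zeta(t)$ chosen so that the equation takes the Airy form $\tilde u''(\zeta)+\zeta\tilde u(\zeta)=\textrm{error}$, and compare with $\mathrm{Ai}$ using the classical bound $|\mathrm{Ai}(\zeta)|\lesssim(1+|\zeta|)^{-1/4}$. Translating back through $\zeta\sim k^{1/3}(t-k)$, the amplitude becomes $k^{-1/4}(1+|k-t|)^{-1/4}$, which smoothly matches the WKB asymptotics on either side. The printed bound $(1-|k-t|)^{-1/4}$ appears to be a typographical slip for $(1+|k-t|)^{-1/4}$, since otherwise the expression is not real-valued on the whole transition interval. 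The delicate part is ensuring uniformity of the Langer approximation in $k$ and verifying that the errors in $Q^{-1/4}$ and its derivatives do not destroy the stated pointwise estimates, which is where the technical heart of the argument lies.
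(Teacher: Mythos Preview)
The paper does not prove this lemma at all: it simply quotes it from Lemma~1.5.3 of Thangavelu's monograph \cite{Tha93} and uses the bounds as a black box. So there is nothing to compare your argument against in the paper itself.

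That said, your Liouville--Green/Langer turning-point approach is exactly the classical route by which these pointwise Laguerre estimates are obtained in the references the paper cites (Szeg\H{o} \cite{Sze67}, Thangavelu \cite{Tha93}); the four regimes you identify --- Taylor expansion near $0$, oscillatory WKB on $[1/k,k/2]$, Airy comparison near the turning point, and exponential decay past $3k/2$ --- match the standard derivation. Your observation that the printed $(1-|k-t|)^{-1/4}$ must be a typo for $(1+|k-t|)^{-1/4}$ is also correct. If you want to flesh this into a self-contained proof rather than a citation, the only point requiring genuine care is the uniformity in $k$ of the Langer approximation in the transition zone; the rest is routine.
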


We now estimate $A$. Let us first deal with the case $\eta\leq k$ and $\eta/2\leq1/k$. Write
$$A=\eta^{-1/2}k^{-\delta'}\int^{1/k}_{{\eta}/2}+\int^{k/2}_{1/k}+\int^{3k/2}_{k/2}+\int^{\infty}_{3k/2}|\mathcal{L}^{\delta'}_k(r)|^2r^{{-\delta'-1/2}}dr.$$
Using the estimates in Lemma \ref{lem:asymptotic property} and the assumption  $\eta/2\leq1/k$, we obtain
\begin{align*}
\int^{1/k}_{{\eta}/2}|\mathcal{L}^{\delta'}_k(r)|^2r^{{-\delta'-1/2}}dr&\leq C\int^{1/k}_{{\eta}/2}(kr)^{\delta'}r^{{-\delta'-1/2}}dr\\
&\leq Ck^{\delta'}k^{-1/2}\leq C\eta^{-\delta'}k^{-1/2},
\end{align*}
and
\begin{align*}
\int^{k/2}_{1/k}|\mathcal{L}^{\delta'}_k(r)|^2r^{{-\delta'-1/2}}dr&\leq C\int^{k/2}_{1/k}(kr)^{-1/2}r^{{-\delta'-1/2}}dr\\
&\leq Ck^{\delta'}k^{-1/2}\leq C\eta^{-\delta'}k^{-1/2}.
\end{align*}
For the third and forth integrals, we use the assumption $\eta\leq k$ and the estimates in Lemma \ref{lem:asymptotic property}
\begin{align*}
\int^{3k/2}_{k/2}|\mathcal{L}^{\delta'}_k(r)|^2r^{{-\delta'-1/2}}dr&\leq C\int^{3k/2}_{k/2}k^{-1/2}|k-r|^{-1/2}r^{{-\delta'-1/2}}dr\\
&\leq Ck^{-1/2}k^{-\delta'-1/2}k^{1/2}\leq C\eta^{-\delta'}k^{-1/2}.
\end{align*}
and
\begin{align*}
\int^{\infty}_{3k/2}|\mathcal{L}^{\delta'}_k(r)|^2r^{{-\delta'-1/2}}dr&\leq C\int^{\infty}_{3k/2}e^{-2\gamma r}r^{{-\delta'-1/2}}dr\\
&\leq Ck^{-\delta'-1/2}\leq C\eta^{-\delta'}k^{-1/2}.
\end{align*}
The case $\eta\leq k$ and $\eta/2\geq1/k$ follows easily, since
\begin{align*}
\int^{k/2}_{\eta/2}|\mathcal{L}^{\delta'}_k(r)|^2r^{{-\delta'-1/2}}dr&\leq C\int^{k/2}_{\eta/2}(kr)^{-1/2}r^{{-\delta'-1/2}}dr\\
&\leq C\eta^{-\delta'}k^{-1/2}.
\end{align*}
The other two cases $k\leq \eta\leq 3k$ and $\eta\geq3k$ can be estimated similarly, and bounded by $C\eta^{-\delta'}k^{-1/2}.$ We leave them for the readers.
Finally, in all cases, we obtain
$$A\leq C(\eta k)^{-\delta'-1/2}$$
and hence prove the estimate (\ref{fundamental estimate 1}).

The estimate of $B$ can be done similarly.  For the first part in $B$, when $k=1$,  it follows easily since $\mathcal L^{\delta'+1}_0(r)=(\Gamma(\delta'+2))^{-1/2}e^{-r/2}r^{\frac{\delta'+1}{2}}$. Hence  it suffices to prove
$$\eta^{1/2}k^{-\delta'}\int^\infty_{{\eta}/2}|\mathcal{L}^{\delta'+1}_{k}(r)|^2r^{{-\delta'-1/2}}dr\leq C(\eta k)^{-\delta'+1/2}$$
for $k\geq1$.
We only need to deal with the following estimate in the case $\eta\leq k$ and $\eta/2\leq1/k$,
\begin{align*}
\int^{1/k}_{{\eta}/2}|\mathcal{L}^{\delta'+1}_{k}(r)|^2r^{{-\delta'-1/2}}dr&\leq C\int^{1/k}_{{\eta}/2}(kr)^{\delta'+1}r^{{-\delta'-1/2}}dr\\
&\leq Ck^{\delta'+1}k^{-3/2}\leq C\eta^{-\delta'}k^{-1/2},
\end{align*}
since other cases follows by the same arguments used in the estimate of $A$ and bounded by $C\eta^{-\delta'}k^{-1/2}$. Therefore
$$\eta^{1/2}k^{-\delta'}\int^\infty_{{\eta}/2}|\mathcal{L}^{\delta'+1}_{k}(r)|^2r^{{-\delta'-1/2}}dr\leq C\eta^{-\delta'}k^{-1/2}\eta^{1/2}k^{-\delta'}\leq C(\eta k)^{-\delta'+1/2}.$$
The estimate of the second integral in $B$ is dealt with similarly. But the calculation is different, so we give a proof.
Write it as
$$\eta^{1/2}k^{-\delta'}\int^{1/k}_{{\eta}/2}+\int^{k/2}_{1/k}+\int^{3k/2}_{k/2}+\int^{\infty}_{3k/2}|\mathcal{L}^{\delta'+1}_{k}(r)|^2r^{{-\delta'+1/2}}dr.$$
Using the estimates in Lemma \ref{lem:asymptotic property} and the assumption  $\eta/2\leq1/k$, we obtain
\begin{align*}
\int^{1/k}_{{\eta}/2}|\mathcal{L}^{\delta'}_{k}(r)|^2r^{{-\delta'+1/2}}dr&\leq C\int^{1/k}_{{\eta}/2}(kr)^{\delta'}r^{{-\delta'+1/2}}dr\\
&\leq Ck^{\delta'}k^{-3/2}\leq C\eta^{-\delta'}k^{1/2},
\end{align*}
and together with $\eta\leq k$,
\begin{align*}
\int^{k/2}_{1/k}|\mathcal{L}^{\delta'}_k(r)|^2r^{{-\delta'+1/2}}dr&\leq C\int^{k/2}_{1/k}(kr)^{-1/2}r^{{-\delta'+1/2}}dr
\end{align*}
which is smaller than
$$Ck^{-1/2}k^{1-\delta'}\leq k^{1/2}\eta^{-\delta'}\;\mathrm{if}\;\delta'\leq 1$$
and
$$Ck^{-1/2}k^{-1+\delta'}\leq k^{1/2}\eta^{-\delta'}\;\mathrm{if}\;\delta'\geq 1$$
For the third and forth integrals, we use the assumption $\eta\leq k$ and the estimates in Lemma \ref{lem:asymptotic property}
\begin{align*}
\int^{3k/2}_{k/2}|\mathcal{L}^{\delta'}_k(r)|^2r^{{-\delta'+1/2}}dr&\leq C\int^{3k/2}_{k/2}k^{-1/2}|k-r|^{-1/2}r^{{-\delta'+1/2}}dr\\
&\leq Ck^{-1/2}k^{-\delta'+1/2}k^{1/2}\leq C\eta^{-\delta'}k^{1/2}.
\end{align*}
and
\begin{align*}
\int^{\infty}_{3k/2}|\mathcal{L}^{\delta'}_k(r)|^2r^{{-\delta'-1/2}}dr&\leq C\int^{\infty}_{3k/2}e^{-2\gamma r}r^{{-\delta'+1/2}}dr\\
&\leq Ck^{-\delta'+1/2}\leq C\eta^{-\delta'}k^{1/2}.
\end{align*}
Other cases is estimated similarly. Hence we obtain that the second integral in $B$ is also bounded by $C(\eta k)^{-\delta'+1/2}$, thus conclude that
$$B\leq C(\eta k)^{-\delta'+1/2}.$$
\end{proof}

\section{Individual ergodic theorems}

In this section we apply the maximal inequality to study pointwise convergence. We first need an appropriate analogue for the noncommutative setting of the usual almost everywhere convergence. This is the almost uniform convergence introduced by Lance \cite{Lan76} (see also \cite{JuXu06}).

Let $(x_{\lambda})_{\lambda \in \Lambda}$ be a family of elements in
$L_p(\mathcal{M}).$ Recall that $(x_{\lambda})_{\lambda\in\Lambda}$ is said to converge almost uniformly to $x,$ abbreviated by $x_{\lambda} \xrightarrow{a.u} x,$ if for every $\epsilon>0$ there exists a projection $e \in \mathcal{M}$ such that
$$ \tau(1 - e) < \epsilon \quad \text{and} \quad \lim_{\lambda}\|e ( x_{\lambda} - x )\|_{\infty} = 0.$$
Also, $( x_{\lambda})_{\lambda \in \Lambda}$ is said to converge bilaterally almost uniformly to $x
,$ abbreviated by $x_{\lambda} \xrightarrow{b.a.u} x,$ if for every $\epsilon>0$ there
is a projection $e \in \M$ such that
$$ \tau( 1 - e ) < \epsilon \quad \text{and} \quad \lim_{\lambda} \|e ( x_{\lambda} - x )e \|_{\infty} = 0.$$

Obviously, if $x_{\lambda}\xrightarrow{a.u} x$ then $x_{\lambda}\xrightarrow{b.a.u} x.$ On the other hand, in the commutative case, these two convergences are equivalent to the usual almost everywhere convergence in terms of Egorov's theorem. However they are different in the noncommutative setting.

As in \cite{JuXu06}, in order to deduce the pointwise convergence theorems from the
corresponding maximal inequalities, it is convenient to use the closed subspace $L_p(\mathcal{M}; c_0)$ of
$L_p(\mathcal{M};\ell_{\infty}).$ Recall that $L_p(\mathcal{M}; c_0)$ is defined
as the space of all sequences $(x_n) \in L_p(\mathcal{M})$ such that
there are $a, b\in L_{2p}(\mathcal{M})$ and $(y_n)\subset \mathcal{M}$
verifying
$$ x_n = a y_n b\quad \text{and}\quad \lim_ n \|y_n \|_{\infty} = 0.$$
Similarly, for the study of the {\it a.u} convergence, we use the closed subspace $L_p(\mathcal{M}; c^c_0)$ of
$L_p(\mathcal{M};\ell^c_{\infty})$, which is defined to be the space of all sequences $(x_n) \in L_p(\mathcal{M})$ such that
there are $b\in L_{p}(\mathcal{M})$ and $(y_n)\subset \mathcal{M}$
verifying
$$ x_n = y_n b\quad \text{and}\quad \lim_ n \|y_n \|_{\infty} = 0.$$

The following lemma will be useful for our study of pointwise convergence
theorems in noncommutative setting (see \cite{DeJu04}).

\begin{lemma}\label{lem:Deju}
\begin{enumerate}[{\rm(i)}]
\item If $1 \leq p <\infty$ and $\{ x_n \} \in L_p(\mathcal{M}; c_0),$ then $x_n \xrightarrow{b.a.u} 0.$
\item If $2 < p < \infty$ and $\{ x_n \} \in L_p (\M; c^c_0),$ then $x_n \xrightarrow{a.u} 0.$
\end{enumerate}
\end{lemma}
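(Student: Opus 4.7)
The plan in both parts is to combine the structural factorization defining $L_p(\M;c_0)$ (resp.\ $L_p(\M;c_0^c)$) with noncommutative Chebyshev applied to spectral projections of $aa^*$, $b^*b$, or $bb^*$. For (i), write $x_n=ay_nb$ with $a,b\in L_{2p}(\M)$ and $\|y_n\|_\infty\to 0$. Since $aa^*$ and $b^*b$ lie in $L_p(\M)$, I would set $f_a=\chi_{[0,L^2]}(aa^*)$ and $f_b=\chi_{[0,L^2]}(b^*b)$; because each projection commutes with its parent operator, $\|f_aa\|_\infty\le L$ and $\|bf_b\|_\infty\le L$, while Chebyshev gives $\tau(1-f_a)+\tau(1-f_b)\le L^{-2p}(\|a\|_{2p}^{2p}+\|b\|_{2p}^{2p})$. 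Setting $e=f_a\wedge f_b$ yields $\tau(1-e)<\epsilon$ for $L$ large, and
$$\|ex_ne\|_\infty=\|eay_nbe\|_\infty\le\|f_aa\|_\infty\|y_n\|_\infty\|bf_b\|_\infty\le L^2\|y_n\|_\infty\longrightarrow 0,$$
which is the required b.a.u.\ convergence.

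For (ii) I would reduce the one-sided statement to the two-sided statement from (i) via the identity
$$\|ex_n\|_\infty^2=\|ex_nx_n^*e\|_\infty,$$
so that a.u.\ convergence of $(x_n)$ to $0$ is equivalent to b.a.u.\ convergence of the positive sequence $z_n:=x_nx_n^*\in L_{p/2}(\M)$. Since $p>2$ gives $p/2>1$, part (i) applies as soon as one knows $(z_n)\in L_{p/2}(\M;c_0)$. Writing $|b^*|=(bb^*)^{1/2}\in L_p(\M)$, the identity $z_n=(y_n|b^*|)(y_n|b^*|)^*$ together with a spectral truncation of $|b^*|$ at level $L$ should yield a three-term factorization $z_n=\alpha w_n\beta$ with $\alpha,\beta\in L_p(\M)$ (exactly the factors $L_{2(p/2)}=L_p$ demanded by the definition) and $\sup_n\|w_n\|_\infty\to 0$, which places $(z_n)$ in $L_{p/2}(\M;c_0)$ and lets (i) close the argument.

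The main obstacle will be producing that factorization in (ii): the naive identity $z_n=(y_n|b^*|)(y_n|b^*|)^*$ is already two-term but with both outer factors depending on $n$, so it is not of the $\alpha w_n\beta$ form required by the definition of $L_{p/2}(\M;c_0)$. Pulling it into the required form with $\alpha,\beta\in L_p(\M)$ independent of $n$ will combine a spectral cutoff on $|b^*|$ with the partial isometry from the polar decomposition of $b$, absorbing the bounded part of $|b^*|$ into a bounded middle factor $w_n$ while leaving the two copies of $|b^*|$ as the outer $L_p$-factors; maintaining clean operator inequalities under the noncommuting square roots is the delicate step.
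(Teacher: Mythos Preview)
The paper does not prove this lemma; it is quoted from Defant--Junge \cite{DeJu04}. So there is no paper proof to compare against, and the question is simply whether your argument stands on its own.

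Your proof of (i) is correct and standard. The only cosmetic point is to note explicitly that $e\le f_a$ and $e\le f_b$ give $e=ef_a=f_be$, so $ex_ne=e(f_aa)y_n(bf_b)e$ and the bound $\|ex_ne\|_\infty\le L^2\|y_n\|_\infty$ follows.

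Part (ii), however, has a genuine gap, and your own closing paragraph already senses it. With the paper's convention (projection on the \emph{left} in the definition of a.u.), you are forced to work with $z_n=x_nx_n^*=y_n\,bb^*\,y_n^*$, and you then need a factorization $z_n=\alpha w_n\beta$ with $\alpha,\beta\in L_p$ independent of $n$ and $\|w_n\|_\infty\to0$. Your suggested fix --- spectral truncation of $|b^*|$ and absorbing the bounded piece into $w_n$ --- does not work: the only way to leave ``two copies of $|b^*|$ as the outer $L_p$-factors'' is to write $w_n=|b^*|^{-1}y_n|b^*|^2y_n^*|b^*|^{-1}$, and the conjugates $|b^*|^{-1}y_n|b^*|$ need not be bounded uniformly (or at all), since similarity by an unbounded positive operator can blow up the norm. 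No rearrangement with spectral cutoffs repairs this, because the obstruction is precisely the failure of $y_ncy_n^*\le\|y_n\|_\infty^2\,c$ when $y_n$ and $c$ do not commute.

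The clean route is to square on the other side: $|x_n|^2=x_n^*x_n=b^*(y_n^*y_n)b$ is \emph{already} in the required form for $L_{p/2}(\mathcal M;c_0)$, with $\alpha=b^*$, $\beta=b\in L_p$ and $w_n=y_n^*y_n$ satisfying $\|w_n\|_\infty=\|y_n\|_\infty^2\to0$. Part (i) then gives $|x_n|^2\to0$ b.a.u., i.e.\ $\|x_ne\|_\infty\to0$ for a projection $e$ of small co-trace. This is a.u.\ convergence with the projection on the \emph{right}, which is the natural partner of the column structure $x_n=y_nb$; the paper's left/right conventions for a.u.\ and for $c_0^c$ are not aligned, but the substance of the lemma (and its use later for self-adjoint sequences, where left and right coincide) is exactly this.
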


The second lemma we need is about the uniform pointwise estimates for the spherical functions, which will be used to deduce individual ergodic theorem on dense subspaces.
\begin{lemma}\label{lem:pointwise spectral estimates}
{\rm (i)}. Let $n\geq1$, fix $0<\varepsilon\leq1$, $N\in\mathbb{N}$ and $r>1$. Let $\Sigma_{\varepsilon,N}=\{(\lambda,k):\;\varepsilon\leq|\lambda|\leq N,\;k\leq N\}$. Then
\begin{align}\label{pointwise spectral estimates1}
\sup_{\zeta\in\Sigma_{\varepsilon,N}}\left|\varphi_\zeta(r)\right|\leq Ce^{-\gamma\varepsilon r^2},
\end{align}
where $C$ and $\gamma$ are positive constants independent of $r$.

{\rm (ii)}. Fix $\varepsilon>0$, and let $\Sigma'_\varepsilon=\{u:\;\varepsilon\leq u\leq \varepsilon^{-1}\}$. Then
\begin{align}\label{pointwise spectral estimates2}
\sup_{\zeta\in\Sigma'_{\varepsilon}}\left|\varphi_\zeta(r)\right|\leq \frac{C}{\varepsilon^{3}}r^{-n+1/2},
\end{align}
where $C$ is a positive constant independent of $r$.
\end{lemma}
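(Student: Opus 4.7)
My plan is to treat the two parts separately by direct appeal to the explicit forms of the Laguerre and Bessel spherical functions recalled in Section 3. Both estimates are essentially bookkeeping on top of standard special-function bounds.

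For (i), recall that for $\zeta=(\lambda,k)\in\Sigma_L$,
$$\varphi_\zeta(r)=\frac{k!(n-1)!}{(k+n-1)!}\,L^{n-1}_k\bigl(\tfrac{1}{2}|\lambda|r^2\bigr)\,e^{-\frac{1}{4}|\lambda|r^2}.$$
On the truncated set $\Sigma_{\varepsilon,N}$ both the degree $k$ and the frequency $|\lambda|$ are bounded by $N$, so $L^{n-1}_k(\tfrac12|\lambda|r^2)$ is a polynomial in $r$ of degree at most $2N$ whose coefficients are controlled uniformly over $\zeta\in\Sigma_{\varepsilon,N}$; this gives a crude bound of the form $|L^{n-1}_k(\tfrac12|\lambda|r^2)|\leq C_N(1+r^{2N})$. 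On the other hand, $|\lambda|\geq\varepsilon$ forces $e^{-\frac14|\lambda|r^2}\leq e^{-\frac{\varepsilon}{4}r^2}$. Splitting this exponential as $e^{-\frac{\varepsilon}{8}r^2}\cdot e^{-\frac{\varepsilon}{8}r^2}$ and absorbing the polynomial factor $r^{2N}$ into one copy (since $\sup_{r>0}r^{2N}e^{-\frac{\varepsilon}{8}r^2}<\infty$) yields the desired bound with, for instance, $\gamma=1/8$; the resulting constant $C$ depends on $N$ and $\varepsilon$, but not on $r$, as required.

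For (ii), recall that on the Bessel branch $\varphi_\zeta(r)=\frac{2^{n-1}(n-1)!}{(ur)^{n-1}}J_{n-1}(ur)$. I would invoke the classical asymptotic $|J_{n-1}(x)|\leq Cx^{-1/2}$ valid for $x\geq 1$. Once $r$ is large enough that $ur\geq 1$ uniformly on $u\in[\varepsilon,\varepsilon^{-1}]$ (for example $r\geq\varepsilon^{-1}$), this gives
$$|\varphi_\zeta(r)|\leq C\,(ur)^{-n+1/2}=C\,u^{-n+1/2}\,r^{-n+1/2}\leq \frac{C}{\varepsilon^{3}}\,r^{-n+1/2},$$
where the final step uses $u\geq\varepsilon$ to bound $u^{-n+1/2}\lesssim\varepsilon^{-(n-1/2)}$, with the $n$-dependent exponent absorbed into the numerical factor $\varepsilon^{-3}$ by adjusting $C$. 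For the complementary range $ur\leq 1$, the Taylor expansion $J_{n-1}(x)=O(x^{n-1})$ near the origin shows that $(ur)^{1-n}J_{n-1}(ur)$ stays uniformly bounded, and a direct check confirms the estimate there as well.

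I do not anticipate any substantive obstacle: once the standard pointwise bounds for $L^{n-1}_k$ (polynomial in $r$ on bounded spectra) and $J_{n-1}$ (the decay $x^{-1/2}$ versus the near-origin expansion) are invoked, both inequalities reduce to elementary manipulations. The only mildly delicate point is in (ii), namely keeping track of the two asymptotic regimes of the Bessel function in a way that produces a single uniform bound, but this is routine.
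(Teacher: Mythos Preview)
Your approach is essentially the same as the paper's: the paper does not give a detailed argument but simply cites the asymptotic properties of Laguerre polynomials (Szeg\H{o}, formula 5.1.14) for (i) and the standard large-argument expansion of Bessel functions for (ii), referring to Section~3 of Nevo--Thangavelu for details. Your write-up is in fact more explicit than what the paper provides.

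One small point in (ii): you write that $u^{-n+1/2}\lesssim\varepsilon^{-(n-1/2)}$ can be ``absorbed into the numerical factor $\varepsilon^{-3}$ by adjusting $C$''. This only works when $n-1/2\leq 3$, i.e.\ for $n\leq 3$; for $n\geq 4$ the power $\varepsilon^{-(n-1/2)}$ genuinely exceeds $\varepsilon^{-3}$ as $\varepsilon\to 0$, and the same issue arises in your treatment of the regime $ur\leq 1$. This is not a gap in the method---your argument yields the correct decay rate $r^{-n+1/2}$ with an $\varepsilon$-dependent constant, which is all that is used downstream (in Theorem~\ref{thm:indvidual ergodic theorem} the parameter $\varepsilon$ is fixed and only the integrability in $r$ matters). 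The specific exponent $3$ in the paper's statement should be read as ``some power depending on $n$'', and your bound $\varepsilon^{-(n-1/2)}$ is the honest one.
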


The estimates in (i) follows from the asymptotic properties of Laguerre polynomials of type $n-1$, see e.g. formula 5.1.14 of \cite{Sze67}. The estimates in (ii) follows from the standard expansion of the Bessel functions at infinity. See for instance Section 3 of \cite{NeTh97} for detailed information.

Now let us present the main result in this section.

\begin{theorem}\label{thm:indvidual ergodic theorem}
Let $n>1$.
Let  $x\in L_p(\M)$.  We have
\begin{enumerate}[\rm (i)]
\item The family $\alpha(\sigma_r)x$ converges to $F(x)$ b.a.u. for $(2n-1)/(2n-2)<p\leq2$ and a.u. for $2<p<\infty$ as $r\rightarrow\infty$.

\item The family $\alpha(\sigma_r)x$ converges to $x$ b.a.u. for $(2n-1)/(2n-2)<p\leq2$ and a.u. for $2<p<\infty$ as $r\rightarrow0$.

\item The family $\alpha(\bar{\sigma}_r)x$ converges to $F(x)$ b.a.u. for $2n/(2n-1)<p\leq2$ and a.u. for $2<p<\infty$ as $r\rightarrow\infty$.

\end{enumerate}

\end{theorem}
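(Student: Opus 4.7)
The plan is to follow the standard two-step paradigm of noncommutative ergodic theory as in \cite{JuXu06}: first verify a.u.\ or b.a.u.\ convergence on a norm-dense subspace of $L_p(\M)$, then promote to all of $L_p(\M)$ by combining the maximal inequalities of Theorems \ref{thm:maximal radial average} and \ref{thm:maximal radial average reduced} with the facts that $L_p(\M;c_0)$ is closed in $L_p(\M;\ell_\infty)$ and $L_p(\M;c^c_0)$ is closed in $L_p(\M;\ell^c_\infty)$, followed by an appeal to Lemma \ref{lem:Deju}. For the a.u.\ statement when $p>2$, I would first derive the column version of the maximal inequality in $L_p(\M;\ell^c_\infty)$ by interpolating between the trivial case $p=\infty$ and the case $p=2$, where $\ell^c_\infty=\ell_\infty$ by Lemma \ref{lem:l8c+l8r}.

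For part (ii), on the dense subspace $\mathcal D_0=\{\alpha(\phi_\varepsilon)y:\phi_\varepsilon\in C^\infty_c(H^n),\;y\in\M\cap L_p(\M)\}$, the estimate $\|\alpha(\sigma_r)z-z\|_\infty\le C_{\phi_\varepsilon}r\|y\|_\infty$ derived in the proof of (\ref{mean ergodic theorem t0}) gives operator-norm convergence as $r\to0$, hence trivial a.u.\ convergence and membership of $(\alpha(\sigma_{r_n})z-z)$ in both $L_p(\M;c_0)$ and $L_p(\M;c^c_0)$ for any sequence $r_n\to0$. The closedness argument then extends the conclusion to all of $L_p(\M)$.

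For parts (i) and (iii), after writing $x=F(x)+(x-F(x))$ and using $\alpha(\sigma_r)F(x)=F(x)$, it suffices to work with $x$ whose $\mathcal A_\alpha$-spectral measure annihilates the trivial character. I would take the natural dense subspace consisting of $x\in L_2(\M)\cap L_p(\M)$ whose spectral measure is supported on a compact piece $\Sigma_{\varepsilon,N}\cup\Sigma'_\varepsilon$ of $\Sigma_\alpha\setminus\{0\}$; Lemma \ref{lem:pointwise spectral estimates} together with (\ref{spectral decomposition}) then yields
\begin{equation*}
\|\alpha(\sigma_r)x\|_2^2\le C\bigl(e^{-2\gamma\varepsilon r^2}+\varepsilon^{-6}r^{-2n+1}\bigr)\|x\|_2^2,
\end{equation*}
which is integrable in $r$ at infinity since $n>1$. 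Picking a sufficiently sparse sequence $r_n\nearrow\infty$, the noncommutative Chebyshev inequality $\tau(\chi_{[\lambda,\infty)}(|a|))\le\lambda^{-2}\|a\|_2^2$ combined with a Borel--Cantelli selection produces a projection of controlled defect on which $\alpha(\sigma_{r_n})x\to 0$ in operator norm. For the reduced group $\bar H^n$ of (iii) only the Laguerre portion contributes (the Bessel spectrum being absent), and the exponential Laguerre decay suffices.

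The continuous parameter is reduced to the discrete one by controlling the short-range oscillations $\sup_{r\in[r_n,r_{n+1}]}^+|\alpha(\sigma_r)x-\alpha(\sigma_{r_n})x|$ through the maximal inequality and the $L_2$-continuity of $r\mapsto\alpha(\sigma_r)x$ (dominated convergence in (\ref{spectral decomposition})). The main technical hurdle will be upgrading the square-integrable-in-$r$ $L_2$-decay on the spectral subspace to honest membership in the noncommutative $c_0$-class; this bridge, rather than any individual spectral estimate, is the main obstacle. Once it is in place, the closed subspace property and Lemma \ref{lem:Deju} deliver b.a.u.\ convergence for the indicated range $p\le2$ and a.u.\ convergence for $p>2$, completing all three assertions.
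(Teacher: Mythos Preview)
Your two-step paradigm (convergence on a dense class, then extension via the maximal inequalities and closedness of $L_p(\M;c_0)$ or $L_p(\M;c^c_0)$, finishing with Lemma~\ref{lem:Deju}) is exactly the paper's route, and your treatment of part~(ii) on the class $\mathcal D_0$ coincides with the paper's.  Two points, however, diverge from the paper and deserve attention.

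\emph{The ``bridge'' to $c_0$ at $p=2$.}  The paper does not pass through Chebyshev/Borel--Cantelli at all.  Once you have, for $x$ with spectral support in $\Sigma_{\varepsilon,N}\cup\Sigma'_\varepsilon$, the bound $\|\alpha(\sigma_r)x\|_2\le C\max\bigl(e^{-\gamma\varepsilon r^2},\varepsilon^{-3}r^{-n+1/2}\bigr)$, the right-hand side is \emph{integrable} on $(1,\infty)$; the paper uses this together with the triangle inequality in $L_2(\M;\ell_\infty)$ and the completeness of $L_2(\M;c_0)$ to conclude directly that $(\alpha(\sigma_r)x)_{r>1}\in L_2(\M;c_0)$.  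Concretely, the tail family $(\alpha(\sigma_r)x)_{r>T}$ is a limit of finite-index pieces whose $L_2(\ell_\infty)$-norms are dominated by the integral of $\|\alpha(\sigma_t)x\|_2$ over $(T,\infty)$, which tends to $0$.  This is the ``bridge'' you flag as the main obstacle; it is short once you use integrability rather than mere decay, and it bypasses both the sparse-sequence selection and the continuous-to-discrete reduction you sketch.

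\emph{The dense set for $p\ne2$.}  The spectral projections onto $\Sigma_{\varepsilon,N}\cup\Sigma'_\varepsilon$ are defined via the functional calculus of $\mathcal A_\alpha$ on $L_2(\M)$; there is no reason they extend to bounded operators on $L_p(\M)$ for $p<2$, so your proposed dense class need not be dense (or even contained) in $L_p(\M)$.  The paper avoids this by first settling $p=2$ and then, for general $p$, taking the universal dense set $L_1(\M)\cap\M$ and interpolating the $L_p(\ell_\infty)$-norm of a truncated family between $L_q(\ell_\infty)$ (controlled by the maximal inequality) and $L_2(\ell_\infty)$ (which tends to $0$ by the $p=2$ case).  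For part~(ii) the interpolation is between $L_q(\ell_\infty)$ and $L_\infty$.  You should replace your spectrally-localized class by $L_1(\M)\cap\M$ for the $p\ne2$ step.

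One small correction on (iii): the Bessel spectrum is \emph{not} absent for the reduced group $\bar H^n$; it corresponds to the central character $\lambda=0$ and persists in $\Sigma_\alpha$.  The paper simply reuses the argument of (i), substituting Theorem~\ref{thm:maximal radial average reduced} for Theorem~\ref{thm:maximal radial average}; both the Laguerre and Bessel estimates of Lemma~\ref{lem:pointwise spectral estimates} are still needed.
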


\begin{proof}
(i). Let us first prove the case $p=2$. Note that $L_2(\M)$ is  the orthogonal sum of three closed subspace: $\mathcal{H}_1=$ the space of operators invariant under each $\alpha(\sigma_r)$, $\mathcal{H}_{\Sigma}=$ the space of operators whose spectral measure is supported in the union of the Laguerre and Bessel spectrum, and finally, $\mathcal{H}_0=$ the space of operators in the kernel of each $\alpha(\sigma_r)$. Clearly, in the first and the third subspace we have $\alpha(\sigma_r)x-F(x)=0$. Hence by Lemma \ref{lem:Deju}, it suffices to prove $(\alpha(\sigma_r)x)_{r>1}\in L_2(\M;c_0)$ for $x\in \mathcal{H}_{\Sigma}$. By the maximal ergodic theorem---Theorem \ref{thm:maximal radial average mpc}, it suffices to prove $(\alpha(\sigma_r)x)_{r>1}\in L_2(\M;c_0)$ for $x$ in some dense set of $\mathcal{H}_{\Sigma}$. {Indeed, suppose we have $(\alpha(\sigma_r)y)_{r>1}\in L_2(\M;c_0)$ for all $y$ in a dense set of $\mathcal{H}_{\Sigma}$}. Then for fixed $x\in \mathcal{H}_{\Sigma}$, for any $\delta>0$, there exists a $y$ in the dense subset of $\mathcal{H}_{\Sigma}$, such that $\|x-y\|_2\leq\delta$. Therefore by maximal inequality (\ref{maximal radial average mpc})
\begin{align*}
\|(\alpha(\sigma_r)x)_{r>1}-(\alpha(\sigma_r)y)_{r>1}\|_{L_2(\ell_{\infty})}\leq C\|x-y\|_2\leq C\delta.
\end{align*}
Since $\delta$ is arbitrary, $(\alpha(\sigma_r)x)_{r>1}$ is in the closure of $L_2(\M;c_0)$, thus belongs to $L_2(\M;c_0)$ because $L_2(\M;c_0)$ is closed.

For $\varepsilon>0$ and $N>0$, let $\mathcal{H}^N_{\varepsilon}$ be the subspace of operators $x\in L_2(\M)$ whose spectral measure $\langle de(x),x\rangle$ is supported in $\Sigma_{\varepsilon,N}$ defined in Lemma \ref{lem:pointwise spectral estimates} and $\mathcal{H}'_{\varepsilon}$ be the subspace of operators whose spectral measure is supported in $\Sigma'_{\varepsilon}$ defined in Lemma \ref{lem:pointwise spectral estimates}. The dense set of $\mathcal{H}_{\Sigma}$ we shall consider is
$\bigcup_{\varepsilon,N}\mathcal{H}^N_{\varepsilon}+\mathcal{H}'_{\varepsilon}$. For $x\in \mathcal{H}^N_{\varepsilon}+\mathcal{H}'_{\varepsilon}$, by spectral decomposition (\ref{spectral decomposition}) as well as the spectral estimates (\ref{pointwise spectral estimates1}) and (\ref{pointwise spectral estimates2}), we have
$$\|\alpha(\sigma_r)x\|_2\leq \sup_{\zeta\in\Sigma_{\varepsilon,N}\cup\Sigma'_{\varepsilon}}|\varphi_{\zeta}(r)|\|x\|_2\leq C\max(e^{-\gamma\varepsilon r^2}, \varepsilon^{-3}r^{-n+1/2})\|x\|_2.$$
Then the fact that  the space $L_2(\M;c_0)$ is complete yields that $(\alpha(\sigma_r)x)_{r>1}\in L_2(\M;c_0)$, since
$$(\alpha(\sigma_r)x)_{r>1}=\int^\infty_1(\alpha(\sigma_r)x)_{r=t}dt$$
and
$$\int^\infty_1\max(e^{-\gamma\varepsilon t^2}, \varepsilon^{-3}t^{-n+1/2})dt\leq C<\infty.$$

For other cases $p\neq2$, using again the maximal ergodic theorem---Theorem \ref{thm:maximal radial average mpc}, it suffices to prove $(\alpha(\sigma_r)x)_{r>1}\in L_p(\M;c_0)$ for $x\in L_1(\mathcal M)\cap \mathcal M$, which is a dense subset of $L_p(\mathcal M)$. Without loss of generality, we assume $p<2$. Find $q$ and $\theta$ such that $(2n-1)/(2n-2)<q<p$ and $1/p=(1-\theta)/q+\theta/2$. Let $x\in L_1(\mathcal M)\cap \mathcal M$. For any $1<t<s$, by maximal inequality (\ref{maximal radial average mpc}),
\begin{align*}
\|(\alpha(\sigma_r)x)_{t<r<s}\|_{L_p(\ell_{\infty})}&\leq \|(\alpha(\sigma_r)x)_{t<r<s}\|^{1-\theta}_{L_q(\ell_{\infty})}\|(\alpha(\sigma_r)x)_{t<r<s}\|^\theta_{L_2(\ell_{\infty})}\\
&\leq C_q^{1-\theta}\|x\|_q\|(\alpha(\sigma_r)x)_{t<r<s}\|^\theta_{L_2(\ell_{\infty})},
\end{align*}
which tends to 0 when $t$ tends to $\infty$ using the result in the case $p=2$. Hence $\alpha(\sigma_r)x)_{r>1}$ is approximated by $\alpha(\sigma_r)x)_{1<r<t}$'s, therefore belongs to $L_p(\M;c_0)$.

(ii). By Lemma \ref{lem:Deju}, it suffices to prove $(\alpha(\sigma_r)x-x)_{0<r\leq1}$ belongs to $L_p(\M;c_0(0,1])$ (with respect to $t\rightarrow0$). As before, by maximal inequality (\ref{maximal radial average mpc}), it suffices show $(\alpha(\sigma_r)x-x)_{0<r\leq1}$ belongs to $L_p(\M;c_0(0,1])$ for $x$ in a dense subset of $L_p(\mathcal M)$. This time, we use the dense subset whose element is of the form $x=\alpha(\phi)y$ with $\phi\in C_c^\infty(H^n)$ and $y\in L_1(\M)\cap \M$ since $\mathcal{D}$ and $L_1(\M)\cap \M$ are dense in $L_p(\M)$. Using the arguments in Theorem \ref{thm:mean ergodic theorem},
$$\|\alpha(\sigma_r)\alpha(\phi)y-\alpha(\phi)y\|_{\infty}\leq\|\sigma_r\ast\phi-\phi\|_1\|y\|_{\infty}\leq C_{\phi}r\|y\|_\infty.$$
Now we finish the proof by interpolation. Let $0<t<s\leq1$. Take $(2n-1)/(2n-2)<q<p$, by the maximal inequality (\ref{maximal radial average mpc}),
\begin{align*}
&\|(\alpha(\sigma_r)x-x)_{t<r<s}\|_{L_p(\ell_{\infty})}\\
&\leq \|(\alpha(\sigma_r)x-x)_{t<r<s}\|^{q/p}_{L_q(\ell_{\infty})}\sup_{t<r<s}\|\alpha(\sigma_r)x-x\|^{1-q/p}_{\infty}\\
&\leq C_{p,q}\|x\|^{q/p}_q(C_\phi s\|y\|_{\infty})^{1-q/p},
\end{align*}
which which tends to 0 when $s$ tends to 0. Whence $\alpha(\sigma_r)x)_{0<r\leq1}$ is approximated by $\alpha(\sigma_r)x)_{s<r\leq1}$'s, therefore belongs to $L_p(\M;c_0(0,1])$.

(iii). In this case, we use Theorem \ref{thm:maximal radial average reduced} instead of Theorem \ref{thm:maximal radial average}, and the same arguments used in (i) are available. 

\end{proof}

\noindent \textbf{Acknowledgement.} The author would like to thank Sundaram Thangavelu for helpful discussion in the proof of Proposition \ref{pro:maximal psi a2 2}. The work is partially supported by the NSF of China-11601396, Funds for Talents of China-413100002 and 1000 Young Talent Researcher Programm of China-429900018-101150(2016).

\end{document}